\pgfplotsset{compat=1.18}
\newcommand{\at}[2][]{#1|_{#2}}
\definecolor{uuuuuu}{rgb}{0.27,0.27,0.27}
\definecolor{sqsqsq}{rgb}{0.1255,0.1255,0.1255}
\newtheorem{definition}{Definition} [section]
\newtheorem{theorem}[definition]{Theorem}
\newtheorem{lemma}[definition]{Lemma}
\newtheorem{claim}[definition]{Claim}
\newtheorem{problem}[definition]{Problem}
\newtheorem{fact}[definition]{Fact}
\begin{document}
\title{\bf\Large On a refinement of the Ahlswede--Katona Theorem}
\date{\today}
\author[1]{Jianfeng Hou\thanks{Research was supported by National Key R\&D Program of China (Grant No. 2023YFA1010202), and by the Central Guidance on Local Science and Technology Development Fund of Fujian Province (Grant No. 2023L3003). Email: \texttt{jfhou@fzu.edu.cn}}}
\author[2]{Xizhi Liu\thanks{Research was supported by the Excellent Young Talents Program
(Overseas) of the National Natural Science Foundation of China. Email: \texttt{liuxizhi@ustc.edu.cn}}}
\author[1]{Yixiao Zhang\thanks{Research was supported by National Key R\&D Program of China (Grant No. 2023YFA1010202). Email: \texttt{fzuzyx@gmail.com}}}
\affil[1]{Center for Discrete Mathematics, Fuzhou University, Fuzhou, 350108, China}
\affil[2]{School of Mathematical Sciences, 
            USTC,
            Hefei, 230026, China}
\maketitle
\begin{abstract}
    A classical theorem of Ahlswede and Katona~\cite{AK78} determines the maximum density of the $2$-edge star in a graph with a given edge density. Motivated by its application in hypergraph Tur\'{a}n problems, we establish a refinement of their result under the additional assumption that the graph contains a large independent set in which every vertex has high degree. 
    
\medskip

\noindent\textbf{Keywords:} the Ahlswede--Katona Theorem, counting stars, the first Zagreb index

\medskip

\noindent\textbf{MSC2020:} 05C35, 05D05, 05C07
\end{abstract}

\section{Introduction}\label{Sec:Introduction}
\subsection{Definitions and main results}
We identify a graph $F$ with its edge set, and use $V(F)$ to denote its vertex set. 
The number of edges in $F$ is denoted by $|F|$. 
Given integers $n$ and $m$ with $m \le \binom{n}{2}$, let $\mathcal{G}(n,m)$ be the collection of all graphs on $n$ vertices with exactly $m$ edges. 
For graphs $F$ and $G$, let $N(F,G)$ denote the number of subgraphs of $G$ that are isomorphic to $F$. 
A classical problem in extremal graph theory is to determine the maximum possible value of $N(F,G)$ when $G \in \mathcal{G}(n,m)$. 
One of the earliest result on this topic is the seminal Kruskal--Katona theorem~\cite{Kru63,Kat68}, which determines the maximum number of $K_{\ell}$, the complete graph on $\ell$ vertices, in a graph with a prescribed number of vertices and edges. 

In this work, we focus on the case $F = S_{2}$, the star with two edges (also known as a cherry). 
We first introduce two classes of graphs. 
Let $n$ and $m$ be integers satisfying $m \le \binom{n}{2}$. 
Write $m = \binom{k}{2} + \ell$, where $k$ and $\ell$ are nonnegative integers with $\ell \le k-1$. 
The \emph{quasi-clique} $C(n,m)$ is the graph on vertex set $[n]$ with edge set 
\begin{align*}
    \binom{[k]}{2} \cup \big\{ \{i, k+1\} \colon i \in [\ell] \big\}. 
\end{align*}
The \emph{quasi-star} $S(n,m)$ is the graph on the vertex set $[n]$, defined as the complement of $C\left(n,\binom{n}{2}-m\right)$. 

The classical theorem of Ahlswede--Katona~\cite{AK78} is stated as follows. 

\begin{theorem}[Ahlswede--Katona~\cite{AK78}]\label{THM:AK78S2}
    Let $n$ and $m$ be integers satisfying $m \le \binom{n}{2}$. 
    Suppose that $G \in \mathcal{G}(n,m)$. 
    Then 
    \begin{align*}
        N(S_2, G)
        \le \max\big\{N\big(S_2, S(n,m)\big),~N\big(S_2, C(n,m)\big)\big\}. 
    \end{align*}
\end{theorem}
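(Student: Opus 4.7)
The starting point is the degree identity
\[
N(S_2, G) = \sum_{v \in V(G)} \binom{d(v)}{2},
\]
where $d(v)$ is the degree of $v$ in $G$. Hence Theorem~\ref{THM:AK78S2} is equivalent to the problem of maximizing $\sum_v \binom{d_v}{2}$ over $G \in \mathcal{G}(n,m)$, an optimization that depends only on the degree sequence of $G$.

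\emph{Step 1: Shifting to a threshold graph.} I would first show via a local modification argument that an extremum can be taken to be a threshold graph. Suppose $G$ is extremal and let $u, v \in V(G)$ satisfy $d(u) \ge d(v)$. If there exists a vertex $x \in N(v) \setminus (N(u) \cup \{u\})$, then replacing the edge $vx$ by $ux$ yields a graph $G' \in \mathcal{G}(n,m)$ with
\[
N(S_2, G') - N(S_2, G) = \binom{d(u)+1}{2} + \binom{d(v)-1}{2} - \binom{d(u)}{2} - \binom{d(v)}{2} = d(u) - d(v) + 1 \ge 1,
\]
contradicting extremality. Hence in any extremal $G$, whenever $d(u) \ge d(v)$ we have $N(v) \setminus \{u\} \subseteq N(u)$, which is the defining property of a \emph{threshold graph}: the vertices can be linearly ordered so that their neighborhoods form a chain under inclusion.

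\emph{Step 2: Reduction to quasi-stars and quasi-cliques.} Threshold graphs on $n$ vertices are encoded by a binary creation sequence, where each step appends either an isolated vertex or a dominating vertex. I would analyze the effect on both the edge count and the sum $\sum_v \binom{d(v)}{2}$ of swapping two consecutive symbols in the creation sequence. Keeping $m$ fixed by also retuning a single ``transition'' vertex, these elementary moves interpolate between the two extremal sequences ``all isolated, then all dominating'' and its mirror image, which realize exactly $S(n,m)$ and $C(n,m)$ respectively. A monotonicity/convexity argument along this chain of moves will then show that the maximum of $\sum_v \binom{d_v}{2}$ over threshold graphs in $\mathcal{G}(n,m)$ is always attained at one of the two endpoints.

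The main obstacle lies in Step 2: the change in $\sum_v \binom{d_v}{2}$ under a single creation-sequence swap is not monotone in a single parameter; it depends on where the swap occurs and on the global edge count $m$. This nonmonotonicity is precisely why both $S(n,m)$ and $C(n,m)$ appear in the statement, with each being optimal in a different range of $m$ (with a crossover near $m \approx \binom{n}{2}/2$). Handling this crossover cleanly, either by a direct case analysis on the creation sequence or by a reparameterization that makes the objective piecewise-convex in a single integer parameter, is the technical crux.
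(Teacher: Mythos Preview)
The paper does not give its own proof of Theorem~\ref{THM:AK78S2}; it is stated as the classical Ahlswede--Katona theorem and cited from~\cite{AK78} as background. The paper's contributions are the refinements (Theorems~\ref{Thm:Max-S2-alpha-alpha}, \ref{Thm:Max-S2-alpha-beta}, \ref{Thm:biparite-ell-k-Z_{1}(G)}, \ref{Thm:biparite-k-ell-Z_{1}(G)}), which \emph{use} Theorem~\ref{THM:AK78S2} and its bipartite analogue Theorem~\ref{Thm:AK78-bipartite} as black boxes. So there is nothing in the paper to compare your proposal against.

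On its own merits: your Step~1 (reduction to threshold graphs via the edge-shift $vx \mapsto ux$) is standard and correct. Step~2 is where the substance lies, and you correctly flag it as the ``technical crux'' without carrying it out. The idea of moving through the creation sequence by local swaps while adjusting a single transition vertex to keep $m$ fixed is indeed the shape of the argument in the later simplifications of Ahlswede--Katona, but the nonmonotonicity you mention is genuine: the change in $\sum_v \binom{d_v}{2}$ under a single swap can have either sign, and one needs a global argument (a two-regime analysis around $m \approx \tfrac{1}{2}\binom{n}{2}$, or a convexity argument in a suitable integer parameter) to force the maximum to one of the endpoints $S(n,m)$, $C(n,m)$. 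As written, what you have is a reasonable plan rather than a proof; the gap is precisely the execution of Step~2.
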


The extension of the Ahlswede--Katona Theorem to $S_{\ell}$ with $\ell \ge 3$, the star with $\ell$ edges was achieved asymptotically only recently by Reiher--Wagner~\cite{RW18} (see also Cairncross--Mubayi~\cite{CM24} for related remarks). 

In this work, we focus on the asymptotic behavior of $N(S_2, G)$. 
Given a graph $G$ on $n$ vertices, we define its \emph{edge density} $\rho(G)$ and its \emph{$S_{2}$-density} (or \emph{cherry density}) $\rho(S_2, G)$ by 
\begin{align*}
    \rho(G)
    \coloneqq \frac{|G|}{\binom{n}{2}}
    \quad\text{and}\quad 
    \rho(S_2, G)
    \coloneqq \frac{N(S_2, G)}{3\binom{n}{3}}. 
\end{align*}
The asymptotic version of Theorem~\ref{THM:AK78S2} is as follows. 
Suppose that $G$ is a graph on $n$ vertices with edge density $x \in [0,1]$. Then 
\begin{align*}
    \rho(S_2, G)
    & \le \max\big\{ 2x -1 +(1-x)^{3/2},~x^{3/2} \big\} +o(1) \\[0.4em]
    & = o(1) + \begin{cases}
        2x -1 +(1-x)^{3/2}, & \quad\text{if}\quad x\in \left[0, \tfrac{1}{2}\right], \\
        x^{3/2}, & \quad\text{if}\quad x\in \left[\tfrac{1}{2}, 1\right]. 
    \end{cases}
\end{align*}
Note that for $x\in \left[\tfrac{1}{2}, 1\right]$, the bound $x^{3/2}$ is asymptotically attained by the quasi-clique construction. 
Observe that this construction does not contain a large independent set in which every vertex has a large degree. 
This motivates us to study the following refinement of Theorem~\ref{THM:AK78S2}. 

Let $G$ be a graph. 
We use $d_{G}(v)$ to denote the degree of a vertex $v$ in $G$. 
For every set $S \subseteq V(G)$, let
\begin{align*}
    \delta_{G}(S)
    \coloneqq \min\left\{ d_{G}(v) \colon v \in S \right\}. 
\end{align*}

Let $n, m, \ell, k$ be nonnegative integers with $m \le \binom{n}{2}$ and $k \ell \le m$. 
Define $\mathcal{G}(n,m,\ell, k)$ as the family of graphs $G \in \mathcal{G}(n,m)$ containing an independent set $I$ of size $\ell$ with $\delta_{G}(I) \ge k$. 
Equivalently, 
\begin{multline*}
    \mathcal{G}(n,m,\ell, k) \\
    \coloneqq \left\{ G \in \mathcal{G}(n,m) \colon \text{$G$ contains an independent set $I$ with $|I| \ge \ell$ and $\delta_{G}(I) \ge k$}\right\}.
\end{multline*}

\begin{problem}\label{Problem:this-note-counting-star}
    Let $n, m, \ell, k$ be nonnegative integers with $m \le \binom{n}{2}$ and $k \ell \le m$.  Determine 
    \begin{align*}
        \max\left\{ N(S_2, G) \colon G \in \mathcal{G}(n,m,\ell, k) \right\}. 
    \end{align*}
\end{problem}
Another motivation for considering Problem~\ref{Problem:this-note-counting-star} comes from its relevance to the study of hypergraph Tur\'{a}n problems in the $\ell_2$-norm, such as~\cite{HLZ25Fano}. 

Since we are interested in the asymptotic behavior of the solution to Problem~\ref{Problem:this-note-counting-star},  for real numbers $\rho, \alpha, \beta \in [0,1]$, define 
\begin{align*}
    I(S_2, \rho, \alpha, \beta)
    \coloneqq \limsup_{n \to \infty} \left\{ \rho(S_2, G) \colon G \in \mathcal{G}\big(n,\rho \tbinom{n}{2}, \alpha n, \beta n \big) \right\}. 
\end{align*}

Write $m = k \ell + \binom{a}{2} + b$, where $a$ and $b$ are nonnegative integers with $b \in [0, a-1]$. 
Let $G_{1}(n,m,\ell, k)$ be the graph with vertex set $[n]$ obtained as follows: 
\begin{enumerate}[label=(\roman*)]
    \item place a complete graph on vertex set $[a]$, 
    \item place a $k \times \ell$ complete bipartite graph with parts $[k]$ and $[n-\ell+1, n]$, 
    \item connect the vertex $a+1$ to all vertices in $[b]$. 
\end{enumerate}

Write $m = a\ell + \binom{a}{2} + b$, where $a$ and $b$ are integers with $b \in [0, a+\ell-1]$. 
Let $G_{2}(n,m,\ell, k)$ be the graph with vertex set $[n]$ obtained as follows: 
\begin{enumerate}[label=(\roman*)]
    \item place a complete graph on vertex set $[a]$, 
    \item place an $a \times \ell$ complete bipartite graph with parts $[a]$ and $[a+1, a+\ell]$, 
    \item connect the vertex $a+\ell+1$ to all vertices in $[b]$. 
\end{enumerate}

When $m \ge \binom{k}{2} + k(n-k)$, the quasi-star $S(n,m)$ contains an independent set $I$ of size at least $\ell$ in which every vertex has degree at least $k$. 
Similarly, for $m \ge k \ell + \binom{k}{2}$, it is straightforward to verify that both $G_{1}(n,m,\ell,k)$ and $G_{2}(n,m,\ell,k)$ contain an independent set of size $\ell$ where each vertex has degree at least $k$. 
Therefore, the constructions $S(n,m)$, $G_{1}(n,m,\ell,k)$, and $G_{2}(n,m,\ell,k)$ provide the following lower bounds for $I(S_2,\rho,\alpha,\beta)$, respectively. 
\begin{fact}\label{FACT:lower-bound}
    Let $\rho, \alpha, \beta \in [0,1]$ be real numbers with $\alpha + \beta \le 1$ and $\rho \ge 2 \beta -\beta^2$. 
    Then 
    \begin{multline*}
        I(S_2,\rho,\alpha,\beta) \\
        \ge \max\left\{ 2\rho -1 +(1-\rho)^{3/2},~\alpha^2\beta + \beta^2 \alpha + \rho \sqrt{\rho-2\alpha \beta},~\alpha^3+(\rho-\alpha^2)\sqrt{\rho+\alpha^2} \right\}. 
    \end{multline*}
\end{fact}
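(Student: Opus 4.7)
The plan is to verify that each of the three listed constructions, with integer parameters $m = \lfloor \rho \binom{n}{2} \rfloor$, $\ell = \lfloor \alpha n \rfloor$, and $k = \lceil \beta n \rceil$, lies in $\mathcal{G}(n, m, \ell, k)$ and has cherry density tending to the corresponding term in the claimed maximum. For each construction the steps are identical: exhibit an independent set of size $\ell$ in which every vertex has degree at least $k$; evaluate $N(S_2, G) = \sum_{v \in V(G)} \binom{d_G(v)}{2}$ from the degree sequence; then normalize by $3\binom{n}{3}$ and pass to the limit as $n \to \infty$, discarding $O(n^2)$ lower-order corrections.

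For $S(n, m)$, writing $\binom{n}{2} - m = \binom{k'}{2} + \ell'$ gives $k'/n \to \kappa \coloneqq \sqrt{1 - \rho}$. The set $[k']$ is independent in $S(n, m)$; its vertices have degrees $n - k'$ or $n - k' - 1$, while vertices outside $[k'+1]$ have degree $n - 1$. Summing the dominant tiers yields cherry density $(1-\kappa)(1+\kappa-\kappa^2)$, which one checks equals $2\rho - 1 + (1-\rho)^{3/2}$.

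For $G_1(n, m, \ell, k)$, writing $m - k\ell = \binom{a}{2} + b$ gives $a/n \to \sqrt{\rho - 2\alpha\beta}$, and the hypothesis $\rho \geq 2\alpha\beta + \beta^2$ is exactly the asymptotic inequality $a \geq k$; thus $[k] \subseteq [a]$ and $[n - \ell + 1, n]$ is the required independent set, each of its vertices having degree exactly $k$. The three dominant degree tiers---$k$ vertices of degree $\approx a + \ell$, $a - k$ vertices of degree $\approx a$, and $\ell$ vertices of degree $k$---together contribute $\tfrac{1}{2}\bigl(k(a + \ell)^2 + (a - k) a^2 + \ell k^2\bigr)$ cherries, which simplifies to $\rho\sqrt{\rho - 2\alpha\beta} + \alpha^2\beta + \alpha\beta^2$. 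For $G_2(n, m, \ell, k)$, solving $\binom{a}{2} + a\ell \approx m$ yields $a/n \to \sqrt{\rho + \alpha^2} - \alpha$, and $\rho \geq 2\alpha\beta + \beta^2$ is again equivalent to $a \geq k$, so $[a + 1, a + \ell]$ is independent with each vertex of degree $a$. Summing over the $a$ vertices of degree $\approx a + \ell$ and the $\ell$ vertices of degree $a$ yields $\tfrac{1}{2}\bigl(a(a + \ell)^2 + \ell a^2\bigr)$, which collapses to $\alpha^3 + (\rho - \alpha^2)\sqrt{\rho + \alpha^2}$ using the defining relation $(a/n)^2 + 2\alpha(a/n) = \rho$.

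The main obstacle is essentially bookkeeping: the low-degree leftover vertex in each construction (namely $a + 1$ in $G_1$, $a + \ell + 1$ in $G_2$), the $\pm 1$ fluctuations in individual vertex degrees, and integer-rounding errors in $a, k, \ell, m$ together contribute at most $O(n^2)$ to $N(S_2, \cdot)$, hence vanish after normalization by $3\binom{n}{3} = \Theta(n^3)$. Verifying the three algebraic identities that collapse the radical-mixed expressions into the claimed closed forms is then a routine substitution.
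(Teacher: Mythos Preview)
Your approach is exactly what the paper intends: the Fact is stated without proof, with only the preceding sentence ``The constructions $S(n,m)$, $G_{1}(n,m,\ell,k)$, and $G_{2}(n,m,\ell,k)$ provide the following lower bounds\ldots'' as justification, so verifying membership and computing the asymptotic cherry density of each construction is the whole argument. Your degree-tier computations and the algebraic simplifications for all three constructions are correct.

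One point you glide over deserves a line of justification. For $S(n,m)$ you exhibit the independent set $[k']$ of size $k' \sim \sqrt{1-\rho}\,n$ with vertex degrees $\sim (1-\sqrt{1-\rho})\,n$, but you do not check that $k' \ge \ell$ and $n-k'-1 \ge k$, i.e.\ that $\sqrt{1-\rho}\ge \alpha$ and $1-\sqrt{1-\rho}\ge \beta$. The first follows automatically from nonemptiness of $\mathcal{G}\big(n,\rho\binom{n}{2},\alpha n,\beta n\big)$ (an independent set of size $\alpha n$ forces $\rho \le 1-\alpha^2$), but the second, equivalent to $\rho \ge 2\beta-\beta^2$, is \emph{not} implied by the stated hypothesis $\rho \ge 2\alpha\beta+\beta^2$ unless $\alpha+\beta\ge 1$. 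This is really an imprecision in the Fact as stated rather than in your method; in the paper's applications (Theorems~\ref{Thm:Max-S2-alpha-alpha} and~\ref{Thm:Max-S2-alpha-beta}) the relevant parameter ranges satisfy it easily, but for a self-contained proof you should either note the extra hypothesis or restrict to parameters where the quasi-star term is anyway dominated by one of the other two.
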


The main results of this work are as follows.
\begin{theorem}\label{Thm:Max-S2-alpha-alpha}
    Suppose that $\rho \in \left[\frac{17}{25}, \frac{7}{10} \right]$ and $\alpha \in \left[\frac{17}{100}, \frac{23}{100}\right]$. 
    Then
    \begin{align*}
        I(S_2,\rho, \alpha, \alpha) 
        = \alpha^3+(\rho-\alpha^2) \sqrt{\rho+\alpha^2}. 
    \end{align*}
\end{theorem}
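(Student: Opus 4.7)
The plan is to use the identity $N(S_2, G) = \sum_v \binom{d_G(v)}{2}$ and work asymptotically, so that bounding $\rho(S_2, G)$ reduces to bounding the first Zagreb index $M_1(G) = \sum_v d_G(v)^2$ up to $o(n^3)$. Fix an independent set $I \subseteq V(G)$ with $|I| = \alpha n$ and $\delta_G(I) \ge \alpha n$, set $J = V(G) \setminus I$, and for every $v \in J$ decompose $d_G(v) = d_I(v) + d_J(v)$ according to neighbors in $I$ and in $J$. Write $e_1 = e(I, J) \ge \alpha^2 n^2$ and $e_2 = e(G[J])$, so that $e_1 + e_2 = \rho\binom{n}{2}$.

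The core step is a compression argument in the spirit of Ahlswede--Katona~\cite{AK78}. Working first inside $J$, I would apply Kelmans-type swaps (if $d(u) \le d(v)$ and $uw \in E(G)$, $vw \notin E(G)$, swap $uw$ for $vw$), which weakly increase $M_1$ by $2(d(v) - d(u)) + 2 \ge 2$; iterated, these compress $G[J]$ to a quasi-clique on a hub set $H \subseteq J$. Next I would transfer $I$-to-$J$ edges onto $H$: moving an edge $uw$ with $u \in I$ and $w \notin H$ to $uh$ with $h \in H$ is an analogous swap that does not decrease $M_1$. After these reductions $G$ takes the form $G_2(n, m, \alpha n, \alpha n)$: a clique on $H$ of size $h$ determined by $h\alpha n + \binom{h}{2} = m$ (so $h = (\sqrt{\rho + \alpha^2} - \alpha)n(1+o(1))$), the complete bipartite graph $K_{h, \alpha n}$ between $H$ and $I$, and isolated remainder. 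A direct calculation, with $\beta := \sqrt{\rho+\alpha^2} - \alpha$, gives
\[
\frac{M_1(G_2)}{n^3} \longrightarrow \alpha\beta^2 + \beta(\alpha+\beta)^2 = \alpha^3 + (\rho-\alpha^2)\sqrt{\rho+\alpha^2},
\]
matching the lower bound in Fact~\ref{FACT:lower-bound}.

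To complete the proof one must verify that $G_2$ really is the extremal construction in the stated parameter window, as opposed to $S(n,m)$ or $G_1(n,m,\alpha n,\alpha n)$. On the rectangle $\rho \in [17/25, 7/10]$ and $\alpha \in [17/100, 23/100]$, the two analytic inequalities
\[
\alpha^3 + (\rho-\alpha^2)\sqrt{\rho+\alpha^2} \ge 2\rho - 1 + (1-\rho)^{3/2}
\quad\text{and}\quad
\alpha^3 + (\rho-\alpha^2)\sqrt{\rho+\alpha^2} \ge 2\alpha^3 + \rho\sqrt{\rho-2\alpha^2}
\]
are routine to check by monotonicity and boundary analysis, and they in fact delineate the stated window.

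The principal obstacle will be executing the compression subject to the independent-set constraint $\delta_G(I) \ge \alpha n$: naive AK-style swaps are unconstrained and can drop the degree of some $I$-vertex below $\alpha n$, so each admissible swap must be paired with a preparatory move that reroutes edges while keeping $I$'s minimum degree intact. A further subtlety is that in the extremal $G_2$, each $I$-vertex has the \emph{uniform} degree $h \in (\alpha n, (1-\alpha)n)$ rather than a convex-extreme degree distribution, which is atypical of AK compression; justifying this equalization of $I$-degrees requires a dedicated smoothing argument that exploits the interaction between $I$ and the quasi-clique on $H$, and this is precisely where the chosen rectangle of $(\rho, \alpha)$ enters. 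Finally, in the extremum the subgraph $G[J]$ has edge density exceeding $1/2$, so Ahlswede--Katona applied to $G[J]$ alone favors a quasi-star rather than a quasi-clique; this forces the cross cherries (with center in $J$ and one leg in $I$, the other in $J$) to be incorporated jointly with the in-$J$ cherries before any AK-type bound can be invoked.
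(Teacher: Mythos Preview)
Your proposal correctly identifies the extremal construction $G_2$ and several of the genuine difficulties, but it is a plan rather than a proof: the obstacles you list in your final paragraph are precisely the ones that must be resolved, and you do not resolve them. In particular, the assertion ``Kelmans swaps inside $J$ compress $G[J]$ to a quasi-clique on a hub $H$'' is unsupported and, as you yourself note, actually false in isolation --- $G[J]$ has edge density above $1/2$, so compression confined to $J$ drives it toward a quasi-star. Likewise, the ``equalization of $I$-degrees'' (every $u \in I$ ending up with degree exactly $h$, strictly between $\alpha n$ and $(1-\alpha)n$) is the real crux, and your sketch offers no mechanism for it; convexity of $x \mapsto x^2$ pushes the $I$-degree sequence toward its extremes, not toward a constant. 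So compression produces only a threshold-like graph, and singling out $G_2$ among all such graphs is exactly the hard part you have not done.

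The paper handles this by a different route. After the shifting step (your Kelmans moves), $J$ decomposes as a clique $V_1$ of size $\omega$ plus an independent set $V_2$, and one first argues (Claim~\ref{CLAIM:alpha-B_0-empty}, using the numerical bound $\omega > \alpha n$, which already depends on the parameter rectangle) that edges between $I$ and $V_2$ may be removed. Then $Z_1(G)$ splits as an explicit function of $\omega$ plus $Z_1(B)$, where $B$ is the bipartite graph between $V_1$ and $I \cup V_2$; this bipartite piece carries the constraint ``some $\alpha n$ vertices on one side have degree at least $\alpha n$'', and the paper proves and invokes a separate \emph{constrained} Ahlswede--Katona theorem for bipartite graphs (Theorems~\ref{Thm:biparite-ell-k-Z_{1}(G)} and~\ref{Thm:biparite-k-ell-Z_{1}(G)}) to bound $Z_1(B)$. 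The residual optimization over $\omega$ and an auxiliary parameter $p$ is one-variable calculus (Lemmas~\ref{LEM:alpha-inequality-star-two-upper-bounds} and~\ref{Lem:Appe-calculation_1}), and that calculus is where the rectangle $[17/25,7/10]\times[17/100,23/100]$ enters again. The ingredient your plan is missing is this reduction to a constrained bipartite problem together with the accompanying bipartite theorem; without it, the direct compression you describe cannot be completed, for precisely the reasons you yourself identify.
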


\begin{theorem}\label{Thm:Max-S2-alpha-beta}
    Suppose that $\rho \in \left[\frac{17}{25}, \frac{7}{10}\right]$ and $\alpha \in \left[\frac{1}{3}, \frac{2}{5}\right]$. 
    Then 
    \begin{align*}
        I\left(S_2,\rho,\alpha, \tfrac{1}{5} \right) 
        = \max\left\{2\rho -1 +(1-\rho)^{3/2},~\alpha^3+(\rho-\alpha^2) \sqrt{\rho+\alpha^2}\right\}. 
    \end{align*}
\end{theorem}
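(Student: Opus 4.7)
The first step is to verify that both constructions $S(n,m)$ and $G_{2}(n,m,\alpha n, n/5)$ lie in $\mathcal{G}(n,m,\alpha n, n/5)$ throughout the stated parameter ranges, after which Fact~\ref{FACT:lower-bound} supplies both lower-bound terms. For $S(n,m)$, the clique of the complementary quasi-clique has size $(\sqrt{1-\rho}+o(1))\,n$ and becomes an independent set in $S(n,m)$ of minimum degree $(1-\sqrt{1-\rho}+o(1))\,n$; for $\rho\in[17/25,7/10]$ and $\alpha\in[1/3,2/5]$ one checks $\sqrt{1-\rho}\geq\alpha$ and $1-\sqrt{1-\rho}\geq 1/5$. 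For $G_{2}$, the set $[a+1,a+\alpha n]$ is independent of size $\alpha n$ and minimum degree $a=(\sqrt{\rho+\alpha^{2}}-\alpha)n$, which exceeds $n/5$ in the same range.

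\textbf{Upper bound setup.} Take $G\in\mathcal{G}(n,m,\alpha n, n/5)$ with independent set $I$, $|I|=\alpha n$, $\delta_{G}(I)\geq n/5$, put $S=V(G)\setminus I$, and parametrize by $p=e(I,S)/n^{2}$ and $q=e(S)/n^{2}$, so $p+q=\rho/2$ and $p\geq \alpha/5$. Using the identity $N(S_{2},G)=\tfrac{1}{2}\sum_{v}d_{v}^{2}-m$, it suffices to estimate $\sum_{v}d_{v}^{2}/n^{3}$ asymptotically. Split the sum by location of $v$ and treat the two parts separately. For $v\in I$ the constraints $d_{v}\in[n/5,(1-\alpha)n]$ and $\sum_{v\in I}d_{v}=pn^{2}$ imply, via convexity at the extreme points of the resulting polytope, $\sum_{v\in I}d_{v}^{2}\leq\bigl[(1-\alpha+\tfrac{1}{5})p-\tfrac{\alpha}{5}(1-\alpha)\bigr]n^{3}+o(n^{3})$. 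For $v\in S$, decompose $d_{v}=d_{v}^{S}+d_{v}^{I}$ and expand $d_{v}^{2}$: bound $\sum(d_{v}^{S})^{2}$ by applying Theorem~\ref{THM:AK78S2} to $G[S]$ (a graph on $(1-\alpha)n$ vertices with $qn^{2}$ edges, yielding $(1-\alpha)^{3}\max\{f_{\mathrm{qs}}(\rho_{S}),\rho_{S}^{3/2}\}\,n^{3}$ with $\rho_{S}=2q/(1-\alpha)^{2}$); bound $\sum(d_{v}^{I})^{2}\leq \alpha p\,n^{3}$ by the extremal-degree argument since $d_{v}^{I}\leq\alpha n$ and $\sum d_{v}^{I}=pn^{2}$; and bound the cross term $\sum d_{v}^{S}d_{v}^{I}$ by the minimum of Cauchy--Schwarz and the trivial estimate $(\max_{v}d_{v}^{I})\sum_{v\in S}d_{v}^{S}\leq 2\alpha q\,n^{3}$.

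\textbf{Main obstacle.} Assembling these estimates yields an explicit piecewise function $\Psi(p)$ upper-bounding $\sum_{v}d_{v}^{2}/n^{3}$, and the theorem reduces to $\max_{p}\Psi(p)\leq \max\{2\rho-1+(1-\rho)^{3/2},\,\alpha^{3}+(\rho-\alpha^{2})\sqrt{\rho+\alpha^{2}}\}+o(1)$ for $p\in[\alpha/5,\alpha(1-\alpha)]$. This is the crux of the proof. The piecewise structure of $\Psi$ naturally produces two regimes: a \emph{quasi-star regime} (larger $p$, with $G[S]$ approaching the complete graph and the whole graph approaching $S(n,m)$), whose maximum matches $2\rho-1+(1-\rho)^{3/2}$; and a \emph{quasi-clique regime} (intermediate $p\approx\alpha(\sqrt{\rho+\alpha^{2}}-\alpha)$, with $G[S]$ approaching a clique and the whole graph approaching $G_{2}$), whose maximum matches $\alpha^{3}+(\rho-\alpha^{2})\sqrt{\rho+\alpha^{2}}$. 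The hardest part will be handling two sources of slack that do not vanish simultaneously: the convexity bound for $\sum_{v\in I}d_{v}^{2}$ is tight only at an extremal $I$-degree sequence, while the Cauchy--Schwarz bound for the cross term is tight only when the $d_{v}^{I}$'s concentrate exactly as in $G_{2}$; a joint-optimization step coupling these two bounds is required to close the gap. Finally one must verify directly, using the numerical hypotheses $\rho\in[17/25,7/10]$ and $\alpha\in[1/3,2/5]$, that no intermediate $p$ yields a larger value and that the $G_{1}$-type lower bound in Fact~\ref{FACT:lower-bound} is dominated by the claimed maximum throughout this range — the specific constants in the hypotheses being precisely those that make this comparison work.
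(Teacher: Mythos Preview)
Your proposal has a genuine gap at exactly the point you flag as the ``main obstacle''. The decoupled bounds you assemble into $\Psi(p)$ are \emph{not} tight at the extremal construction $G_{2}$, and the resulting slack is a constant, not $o(1)$. Concretely: in $G_{2}(n,m,\alpha n,n/5)$ every vertex of $I$ has degree $\tilde a n$ with $\tilde a=\sqrt{\rho+\alpha^{2}}-\alpha$, so $\sum_{v\in I}d_{v}^{2}=\alpha\tilde a^{2}n^{3}$. Your convexity bound on $\sum_{v\in I}d_{v}^{2}$ is tight only when all $d_{v}$ sit at the endpoints $n/5$ or $(1-\alpha)n$; at the constant profile $d_{v}\equiv\tilde a n$ it overshoots by exactly
\[
\alpha\bigl(\tilde a-\tfrac{1}{5}\bigr)\bigl(1-\alpha-\tilde a\bigr)\,n^{3}.
\]
For $(\rho,\alpha)=(17/25,1/3)$ this slack is about $0.013\,n^{3}$, while the target value $\alpha^{3}+(\rho-\alpha^{2})\sqrt{\rho+\alpha^{2}}\approx 0.543$ and the quasi-star value $\approx 0.541$. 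Since your Cauchy--Schwarz and Ahlswede--Katona bounds on the $S$-side are both tight at $G_{2}$, your assembled upper bound at $p=\alpha\tilde a$ is roughly $0.556$, strictly above the claimed maximum. So the inequality $\max_{p}\Psi(p)\le\max\{\cdots\}$ you reduce to is simply false, and the ``joint-optimization step'' you defer is not a cleanup but the whole problem. No local refinement of the cross-term bound can help, because the loss is entirely in the $I$-side convexity estimate.

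The paper proceeds very differently: it does not decompose $\sum d_{v}^{2}$ analytically at all. Instead it first applies a shifting/compression argument to the extremal $G$ so that neighbourhoods become nested (the analogue of Claim~\ref{Claim:assume-G-after-shifting}), forcing $V\setminus I$ to split as a clique $V_{1}$ of size $\omega$ plus an independent set $V_{2}$, and then shows one may take $G[I\cup V_{2}]$ empty. The entire problem then reduces to bounding $Z_{1}$ of the bipartite graph between $V_{1}$ and $I\cup V_{2}$, for which the paper invokes its bipartite refinements of Ahlswede--Katona (Theorems~\ref{Thm:biparite-ell-k-Z_{1}(G)} and~\ref{Thm:biparite-k-ell-Z_{1}(G)}). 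In the regime $\omega\ge n/2$ a further block decomposition (parameters $i_{0},j_{0}$) and several explicit one-variable optimizations (the Appendix lemmas, verified numerically) finish the argument. The structural reduction is what lets the paper compute $Z_{1}$ exactly on a small-dimensional family rather than lose a constant through convexity; your proposal has no substitute for this step.
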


\subsection{Counting cherries in bipartite graphs}\label{Sec:note-bipartite}
In this subsection, we consider the problem of maximizing the number of copies of $S_{2}$ in an $r \times s$ bipartite graph with a given number of edges.

Let $r, s, m$ be nonnegative integers with $m \le rs$ and $r \ge s$.
Write $m = r p + q$, where $p$ and $q$ are nonnegative integers with $q \in [0, r-1]$.
Define $B(r,s,m)$ as the bipartite graph with two parts $U = \{u_1, \ldots, u_{r} \}$ and $W = \{w_1, \ldots, w_{s} \}$ and with edge set
\begin{align*}
    \big\{ \{u_i, w_j\} \colon (i,j) \in [r] \times [p] \big\} 
    \cup \big\{ \{u_i, w_{p+1}\} \colon i \in [q] \big\}. 
\end{align*}

In the same paper~\cite{AK78}, Ahlswede--Katona proved the following result for bipartite graphs. 
\begin{theorem}[Ahlswede--Katona~\cite{AK78}]\label{Thm:AK78-bipartite}
    Let $r, s, m$ be nonnegative integers with $m \le rs$ and $r \ge s$. 
    Suppose that $G$ is an $r \times s$ bipartite graph with $m$ edges. 
    Then 
    \begin{align*}
        N(S_2,G) \le N\big( S_2, B(r,s,m) \big). 
    \end{align*}
\end{theorem}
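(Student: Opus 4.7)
The plan is to combine a shifting argument with an induction. The starting point is the identity
\begin{align*}
N(S_2, G) = \sum_{v \in V(G)} \binom{d_G(v)}{2},
\end{align*}
which converts the question into maximizing a sum of convex functions of the vertex degrees.

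The first stage is a Robin-Hood shift on the $W$-side. For any $w, w' \in W$ with $d_G(w) \ge d_G(w')$ and $N_G(w') \not\subseteq N_G(w)$, I pick $u \in N_G(w') \setminus N_G(w)$, remove $\{u, w'\}$, and insert $\{u, w\}$. This preserves $|E(G)|$ and every $U$-degree, while strictly increasing $\binom{d(w)}{2} + \binom{d(w')}{2}$ by $d(w) - d(w') + 1 \ge 1$. Iterating drives the graph to one whose $W$-neighborhoods are totally ordered by inclusion; after relabeling $U$ in decreasing order of degree, the result is a Young-diagram graph $G_{\lambda}$, where $u_i \sim w_j$ iff $i \le \lambda_j$, for some partition $\lambda$ of $m$ with $\lambda_1 \le r$ and at most $s$ nonzero parts. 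It therefore suffices to show
\begin{align*}
\sum_j \binom{\lambda_j}{2} + \sum_i \binom{\lambda'_i}{2} \le N\big(S_2, B(r, s, m)\big)
\end{align*}
for every such $\lambda$, with equality at $\lambda^{*} = (\underbrace{r, \ldots, r}_{p}, q, 0, \ldots)$, where $m = p r + q$ and $q \in [0, r-1]$.

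I would prove this partition inequality by induction on $r + s$, with trivial base case $s = 1$. If $\lambda_1 = r$, peeling the first (full) column yields a bipartite graph on $r \times (s-1)$ with $m - r$ edges, and the identity
\begin{align*}
N(S_2, G_\lambda) = \binom{r}{2} + (m - r) + N(S_2, G_{\lambda^{-}})
\end{align*}
holds both for $G_\lambda$ (where $G_{\lambda^{-}}$ denotes the peeled graph) and for $B(r, s, m)$ in place of $G_\lambda$ whenever $m \ge r$; so the induction hypothesis on $(r, s-1, m-r)$ closes this case. If $\lambda_1 < r$, then $\lambda'_r = 0$ and some vertex $u \in U$ is isolated; deleting it gives a bipartite graph on $(r-1) \times s$ with $m$ edges. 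When $r - 1 \ge s$, the induction hypothesis bounds $N(S_2, G_\lambda)$ by $N(S_2, B(r-1, s, m))$, and I finish using the monotonicity lemma $N(S_2, B(r-1, s, m)) \le N(S_2, B(r, s, m))$. The boundary case $r = s$ follows by the same reduction, together with the observation that $B(s, s-1, m) = B(s, s, m)$ throughout the range $m \le (s-1)s$ that is forced by $\lambda_1 < r = s$.

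The main obstacle is the monotonicity lemma: incrementing $r$ shifts both $p$ and $q$ in the decomposition $m = pr + q$, so the contributions from the concentrated $W$-vertices and from the $U$-side spread change in tandem. I would verify it by a short case analysis on whether $p + q \le r - 2$ (in which case $(p', q') = (p, p+q)$ for the analogous decomposition of $B(r-1, s, m)$) or $p + q \ge r - 1$ (in which case $(p', q') = (p+1, p + q - r + 1)$), and compute the discrete difference in each subcase, verifying that it simplifies to a manifestly nonnegative quantity such as $p(r - 1 - p - q)$ in the first subcase.
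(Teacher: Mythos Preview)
The paper does not give a proof of this theorem: it is quoted as a known result of Ahlswede and Katona and used as a black box (for instance, as the base case $k=1$ in the inductive proof of Theorem~\ref{Thm:biparite-ell-k-Z_{1}(G)}, and repeatedly in Sections~\ref{Sec:note-general-graph-alpha-alpha} and~\ref{Sec:note-general-graph-alpha-beta}). There is therefore no in-paper argument to compare yours against.

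That said, your shifting-plus-induction scheme is sound. The compression step correctly reduces to Young-diagram graphs $G_\lambda$, and the peel-a-full-column / delete-an-isolated-vertex dichotomy drives the induction on $r+s$ cleanly; the identity for peeling a full column matches the corresponding peel of $B(r,s,m)$ whenever $m\ge r$, as you say. The monotonicity step $N(S_2,B(r-1,s,m))\le N(S_2,B(r,s,m))$ is the only place needing a calculation; your first-subcase difference $p(r-1-p-q)$ is correct, and since you only invoke this lemma when $r-1\ge s$, one has $p\le s-1\le r-2$ and hence $p+q\le 2r-3$, so your two subcases already exhaust the possibilities. The boundary treatment at $r=s$ via swapping sides, together with the observation that $w_s$ is isolated in $B(s,s,m)$ whenever $m\le s(s-1)$, is also correct.
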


Let $r, s, \ell, k, m$ be integers with $r \ge \ell$, $s \ge k$, and $k \ell \le m \le r k$. 

Write $m = k \ell + p(r - \ell) + q$, where $p$ and $q$ are nonnegative integers with $q \in [0, r -\ell -1]$. 
Define $B_{1}(r, s, m, \ell, k)$ as the bipartite graph with parts $U = \{u_1, \ldots, u_{r} \}$ and $W = \{w_1, \ldots, w_{s} \}$, and with edge set:
\begin{enumerate}[label=(\roman*)]
    \item $\big\{ \{u_i, w_j\} \colon (i,j) \in [r] \times [p] \big\}$, 
    \item $\big\{ \{u_i, w_{p+1}\} \colon i \in [\ell + q] \big\}$, and 
    \item $\big\{ \{u_i, w_j\} \colon (i,j) \in [\ell] \times [p+2, k] \big\}$. 
\end{enumerate}

Write $m = k \ell + p k + q$, where $p$ and $q$ are nonnegative integers with $q \in [0, k - 1]$. 
Define $B_{2}(r, s, m, \ell, k)$ as the bipartite graph with parts $U = \{u_1, \ldots, u_{r} \}$ and $W = \{w_1, \ldots, w_{s} \}$, and with edge set
\begin{align*}
    \big\{ \{u_i, w_j\} \colon (i,j) \in [\ell+p] \times [k] \big\}
    \cup \big\{ \{u_{\ell+p+1}, w_i\} \colon i \in [q] \big\}. 
\end{align*}

\begin{theorem}\label{Thm:biparite-ell-k-Z_{1}(G)}
    Let $r, s, \ell, k, m$ be integers with $r \ge s$, $\ell \ge k$, $r \ge \ell$, $s \ge k$, and $k \ell \le m \le rs$. 
    Let $G = G[U, W]$ be an $r\times s$ bipartite graph with $m$ edges. 
    Suppose that there exists a subset $I \subseteq U$ with $|I| = \ell$ and $\delta_{G}(I) \ge k$. 
    Then 
    \begin{align*}
        N(S_2, G)
        \le 
        \begin{cases}
            N\big( S_2, B_1(r,s,m,\ell,k) \big), &\quad\text{if}\quad m \le rk ~\text{and}~ k + \ell \le r, \\[0.5em]
            N\big( S_2, B_2(r,s,m,\ell,k) \big), &\quad\text{if}\quad m \le rk ~\text{and}~ k + \ell > r, \\[0.5em]
            N\big( S_2, B(r,s,m) \big), &\quad\text{if}\quad m \ge r k. 
        \end{cases}
    \end{align*}
\end{theorem}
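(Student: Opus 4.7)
The plan is to exploit the first-Zagreb-index formula
\[
N(S_2, G) = \sum_{v \in V(G)} \binom{d_G(v)}{2}
\]
and reduce the problem to a discrete optimization over partitions. Since $G$ is bipartite, any subset of $U$ is independent, so the hypothesis $\delta_G(I) \ge k$ with $I \subseteq U$, $|I|=\ell$, simply asserts that $\ell$ vertices of $U$ have degree at least $k$. After reordering $U$ and $W$ by decreasing degree and taking $I = \{u_1,\ldots,u_\ell\}$, the constraint becomes $d(u_\ell) \ge k$. A standard shifting argument (swap $(u_i,w_j) \notin E$ with $(u_{i'},w_{j'}) \in E$ whenever $i \le i'$ and $j \le j'$) preserves $N(S_2,G)$ and the degree constraint, and reduces the graph to Young-diagram form: $\{u_i, w_j\} \in E$ iff $j \le \lambda_i$ for a partition $\lambda_1 \ge \cdots \ge \lambda_r \ge 0$ with $|\lambda| = m$, $\lambda_1 \le s$, and $\lambda_\ell \ge k$. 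The cherry count becomes the conjugation-symmetric functional $F(\lambda) := \sum_i \binom{\lambda_i}{2} + \sum_j \binom{\lambda^*_j}{2}$, where $\lambda^*$ is the conjugate partition.

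The case $m \ge rk$ is immediate: the unconstrained bipartite Ahlswede--Katona extremizer $B(r,s,m)$ has each row of length at least $\lfloor m/r \rfloor \ge k$, so Theorem~\ref{Thm:AK78-bipartite} applies directly. For $m \le rk$, I would reduce to the saturated form $\lambda_1 = \cdots = \lambda_\ell = k$; once this holds, the residual $\mu := (\lambda_{\ell+1},\ldots,\lambda_r)$ is an unconstrained partition of $m-k\ell$ fitting in an $(r-\ell) \times k$ rectangle, and the identity $\binom{\ell+a}{2} = \binom{\ell}{2} + \binom{a}{2} + \ell a$ yields the decomposition
\[
F(\lambda) = \ell\binom{k}{2} + k\binom{\ell}{2} + \ell(m - k\ell) + F(\mu),
\]
so that maximizing $F(\lambda)$ reduces to maximizing $F(\mu)$. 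Theorem~\ref{Thm:AK78-bipartite} applied to the $(r-\ell) \times k$ sub-problem identifies the residual extremizer: when $r-\ell \ge k$ the tall sub-rectangle fills its columns first, translating back to $B_1(r,s,m,\ell,k)$; when $r-\ell < k$, the wide sub-rectangle fills its rows first (i.e., Theorem~\ref{Thm:AK78-bipartite} applied after transposing the sub-problem), translating back to $B_2(r,s,m,\ell,k)$.

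The main obstacle is justifying the saturation reduction $\lambda_i = k$ for $i \in [\ell]$. A single-box move from row $\ell$ (with $\lambda_\ell > k$) down to a lower row need not preserve $F$, since the row-square term $\sum_i \binom{\lambda_i}{2}$ and the column-square term $\sum_j \binom{\lambda^*_j}{2}$ move in opposite directions; using $\binom{a+1}{2}-\binom{a}{2}=a$, the net change equals $(\lambda_j - \lambda_\ell) + (\lambda^*_{\lambda_j+1} - \lambda^*_{\lambda_\ell}) + 2$, which can be negative when $\lambda_j \ll \lambda_\ell$. The remedy is to use composite moves: after first equalizing $\lambda_1 = \cdots = \lambda_\ell = a$ by within-top exchanges, one removes the entire column $a$ (which contains exactly $\ell$ boxes, since $\lambda^*_a = \ell$) and redistributes those $\ell$ boxes across rows $\ell+1,\ldots,r$ to raise their lengths in a balanced way. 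A direct accounting shows the row-square loss $\ell(2a-1)$ and the column-square gain $2\ell(\lambda^*_{b+1} - \ell) + \ell^2$ (where $b$ is the common length the target rows jump to) balance non-negatively whenever the redistribution respects the existing staircase of $\mu$. Iterating these composite moves drives the excess $\sum_{i \le \ell}(\lambda_i - k)$ to zero, after which the decomposition above combined with Theorem~\ref{Thm:AK78-bipartite} concludes the proof.
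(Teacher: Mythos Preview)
Your reduction to Young-diagram form, the case split on $m \gtrless rk$, and the decomposition $F(\lambda) = \ell\binom{k}{2} + k\binom{\ell}{2} + \ell(m-k\ell) + F(\mu)$ once $\lambda_1 = \cdots = \lambda_\ell = k$ are all correct and match the paper; after saturation, applying Theorem~\ref{Thm:AK78-bipartite} to the $(r-\ell)\times k$ residual finishes exactly as you say (this is the content of Fact~\ref{FACT:Z_1(B_1)-and-Z_1(B_2)}). The gap is the saturation step itself. Your ``within-top exchanges'' to force $\lambda_1 = \cdots = \lambda_\ell = a$ are not well-defined when $\ell \nmid \sum_{i\le\ell}\lambda_i$, and in any case equalizing strictly decreases the row term $\sum_i \binom{\lambda_i}{2}$ by convexity, with no reason for the column term to compensate. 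The subsequent ``remove column $a$, redistribute $\ell$ boxes'' move presupposes both $\lambda^*_a = \ell$ and the existence of $\ell$ rows of a common length below row $\ell$, and the accounting you give is incomplete (it omits the row-gain from the added boxes and the column-loss from deleting column $a$). Concretely, take $r=9$, $s=6$, $\ell=2$, $k=1$, $m=8$: starting from $\lambda=(6,2)$ with $F=18$, your composite move (equalize to $(4,4)$, remove column $4$, redistribute to rows $3,4$) produces $(3,3,1,1)$ with $F=14$ --- and \emph{no} redistribution of those two boxes among rows $\ge 3$ does better than $F=14$ --- whereas the true maximum is $F=28$ at $\lambda=(1^8)$. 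There is simply no monotone box-moving path from $(6,2)$ to the saturated configuration.

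The paper establishes saturation by a completely different mechanism: induction on $k$. After shifting, a partial-transposition argument (Claim~\ref{CLAIM:dw1>=du1}) shows one may assume $\Delta_{G,\mathrm{right}} \ge \Delta_{G,\mathrm{left}}$, and among such extremizers one fixes a graph with $\Delta_{G,\mathrm{right}}$ maximal. Peeling off the fullest column $w_1$ leaves a bipartite graph $G_1$ satisfying the hypotheses with $k$ replaced by $k-1$; the inductive bound applied to $G_1$ either forces $\Delta_{G,\mathrm{left}} = k$ directly (when the residual lands in the $B_1$/$B_2$ regime), or enables an explicit edge-rearrangement that strictly increases $\Delta_{G,\mathrm{right}}$, contradicting maximality (Claim~\ref{THM-bi-degree-du1}). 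The pairing of induction on $k$ with the extremal choice of $\Delta_{G,\mathrm{right}}$ is the idea your argument is missing.
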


An argument analogous to that in the proof of Theorem~\ref{Thm:biparite-ell-k-Z_{1}(G)} yields the following result.
\begin{theorem}~\label{Thm:biparite-k-ell-Z_{1}(G)}
    Let $r, s, \ell, k, m$ be integers with $r \ge s$, $\ell \ge k$, $r \ge \ell$, $s \ge k$, and $k \ell \le m \le rs$.  
    Let $G = G[U, W]$ be an $r\times s$ bipartite graph with $m$ edges. 
    Suppose that there exists a subset $I \subseteq W$ with $|I| = k$ and $\delta_{G}(I) \ge \ell$. 
    Then 
    \begin{align*}
        N(S_2, G)
        \le 
        \begin{cases}
            N\big( S_2, B_1(r,s,m,\ell,k) \big), &\quad\text{if}\quad m \le rk ~\text{and}~ k + \ell \le r, \\[0.5em]
            N\big( S_2, B_2(r,s,m,\ell,k) \big), &\quad\text{if}\quad m \le rk ~\text{and}~ k + \ell > r, \\[0.5em]
            N\big( S_2, B(r,s,m) \big), &\quad\text{if}\quad m \ge r k. 
        \end{cases}
    \end{align*}
\end{theorem}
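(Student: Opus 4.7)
The plan is to reduce Theorem~\ref{Thm:biparite-k-ell-Z_{1}(G)} to Theorem~\ref{Thm:biparite-ell-k-Z_{1}(G)} by a one-sided compression that transports the independent-set condition from $W$ to $U$. The starting point is the identity
\begin{align*}
    N(S_2,G)=\sum_{u\in U}\binom{d_G(u)}{2}+\sum_{w\in W}\binom{d_G(w)}{2},
\end{align*}
which will be exploited in the form: to upper bound $N(S_2,G)$ under a fixed $W$-degree sequence, it suffices to upper bound $\sum_{u\in U}\binom{d_G(u)}{2}$.

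First I would iteratively perform the following local move on $G$: whenever there exist $u_i,u_{i'}\in U$ with $d_G(u_i)\ge d_G(u_{i'})$ and $w\in W$ with $u_{i'}w\in E(G)$, $u_iw\notin E(G)$, replace the edge $u_{i'}w$ by $u_iw$. Every such move leaves each $d_G(w')$ unchanged, so the hypothesis ``there exists $I\subseteq W$ with $|I|=k$ and $\delta_G(I)\ge\ell$'' as well as the quantity $\sum_{w}\binom{d_G(w)}{2}$ are preserved. On the other hand,
\begin{align*}
    \binom{d_G(u_i)+1}{2}+\binom{d_G(u_{i'})-1}{2}-\binom{d_G(u_i)}{2}-\binom{d_G(u_{i'})}{2}=d_G(u_i)-d_G(u_{i'})+1\ge 1,
\end{align*}
so $\sum_{u}\binom{d_G(u)}{2}$ strictly increases. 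The process therefore terminates at a graph $G'$ with $N(S_2,G')\ge N(S_2,G)$; after re-indexing $U$ so that $d_{G'}(u_1)\ge\cdots\ge d_{G'}(u_r)$, termination forces $N_{G'}(w)=\{u_1,\ldots,u_{d_{G'}(w)}\}$ for every $w\in W$.

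Next I would transfer the independent-set hypothesis to the $U$-side. Because the $W$-degree sequence is preserved throughout the compression, $|\{w\in W:d_{G'}(w)\ge\ell\}|\ge k$. In the compressed graph, $u_\ell\in N_{G'}(w)$ iff $d_{G'}(w)\ge\ell$, so $d_{G'}(u_\ell)=|\{w\in W:d_{G'}(w)\ge\ell\}|\ge k$. By the non-increasing ordering of the $U$-degrees, the set $I':=\{u_1,\ldots,u_\ell\}$ then satisfies $|I'|=\ell$ and $\delta_{G'}(I')\ge k$. Hence $G'$ meets the hypotheses of Theorem~\ref{Thm:biparite-ell-k-Z_{1}(G)}, whose three cases and corresponding extremal graphs $B_1$, $B_2$, $B$ agree verbatim with those in Theorem~\ref{Thm:biparite-k-ell-Z_{1}(G)}. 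Applying Theorem~\ref{Thm:biparite-ell-k-Z_{1}(G)} to $G'$ and combining with $N(S_2,G)\le N(S_2,G')$ yields the desired bound.

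The main obstacle is verifying that at termination each $N_{G'}(w)$ is genuinely an initial segment of $U$ under the non-increasing degree ordering; otherwise, if $u_{i'}w\in E(G')$ and $u_iw\notin E(G')$ for some $i<i'$, the ordering would give $d_{G'}(u_i)\ge d_{G'}(u_{i'})$, so a further move would still be available, contradicting termination. Beyond this, everything is routine bookkeeping: the compression converts an independent-set condition on one side of the bipartition into an equivalent one on the other side, so the two theorems become equivalent for compressed graphs, and the general case reduces to the compressed one.
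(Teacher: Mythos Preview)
Your reduction is correct and genuinely different from the paper's intended route. The paper does not give a separate proof of Theorem~\ref{Thm:biparite-k-ell-Z_{1}(G)}; it simply states that the argument is ``essentially the same'' as that of Theorem~\ref{Thm:biparite-ell-k-Z_{1}(G)} and would re-run the entire induction on $k$ with the roles of the two sides adjusted. You instead treat Theorem~\ref{Thm:biparite-ell-k-Z_{1}(G)} as a black box and reach it by a one-sided compression that preserves all $W$-degrees while weakly increasing $N(S_2,\cdot)$. The key observation---that in the compressed graph $d_{G'}(u_\ell)=|\{w\in W:d_{G'}(w)\ge \ell\}|\ge k$, so the constraint ``$k$ vertices of $W$ have degree $\ge \ell$'' translates into ``$\ell$ vertices of $U$ have degree $\ge k$''---is exactly what is needed, and the termination argument via strict increase of $\sum_{u}\binom{d(u)}{2}$ is sound. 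Since the parameters $(r,s,m,\ell,k)$ and hence the case split and the extremal graphs $B_1,B_2,B$ are unchanged, the conclusion transfers verbatim.

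Compared with the paper's approach, your argument is shorter and avoids duplicating the inductive machinery; the paper's route, on the other hand, would in principle also identify the extremal structure directly for this variant rather than passing through an auxiliary graph. Both are valid; yours is the more economical way to derive Theorem~\ref{Thm:biparite-k-ell-Z_{1}(G)} once Theorem~\ref{Thm:biparite-ell-k-Z_{1}(G)} is in hand.
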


This paper is organized as follows. 
Section~\ref{SEC:Prelim} introduces some definitions and preliminary results. 
In Section~\ref{SEC:proof-bipartite}, we present the proof of Theorem~\ref{Thm:biparite-ell-k-Z_{1}(G)}. 
In Section~\ref{Sec:note-general-graph-alpha-alpha}, we present the proof of Theorem~\ref{Thm:Max-S2-alpha-alpha}. 
The proof of Theorem~\ref{Thm:Max-S2-alpha-beta} is provided in Section~\ref{Sec:note-general-graph-alpha-beta}. 

\section{Preliminaries}\label{SEC:Prelim}
Let $G$ be a graph. 
We use $N_{G}(v)$ to denote the set of neighbors of a vertex $v$ in $G$. 
The subscript $G$ will be omitted when it is clear from the context. 
Given two disjoint sets $U$ and $W$, we use $K[U,W]$ to denote the complete bipartite graph with parts $U$ and $W$. 
For a bipartite graph $G = G[U,W]$, let 
\begin{align*}
\Delta_{G, \mathrm{left}} 
\coloneqq \max\{d_{G}(v) \colon v \in U\}, 
\quad \text{and} \quad 
\Delta_{G, \mathrm{right}} 
\coloneqq \max\{d_{G}(v) \colon v \in W\}.
\end{align*}

Following the definition in~\cite{GT72}, we define the \emph{first Zagreb index} of a graph $G$ as  
\begin{align*}
    Z_{1}(G)
    \coloneqq \sum_{v \in V(G)} d_{G}^{2}(v). 
\end{align*}
Since $N(S_2, G) = \sum_{v \in V(G)} \tbinom{d_{G}(v)}{2}$, it follows that 
\begin{align}\label{equ:Zagreb-index-S2}
    Z_{1}(G) = 2 N(S_2, G) + 2|G|. 
\end{align}
Therefore, in all theorems from Section~\ref{Sec:Introduction}, one may replace $N(S_{2}, \cdot)$ with $Z_{1}(\cdot)$. 

\begin{fact}\label{FACT:Z_{1}(G)-del-uv-add-xy}
    Let $G$ be a graph. 
    Suppose that $u v \in G$ and $xy \not\in G$. 
    Consider the new graph $\hat{G} \coloneqq \big( G \setminus \{uv\} \big) \cup \{xy\}$. 
    Then the following hold: 
    \begin{enumerate}[label=(\roman*)]
        \item\label{FACT:Z_{1}(G)-del-uv-add-xy-a} If $\{u,v\} \cap \{x, y\} = \emptyset$, then 
        \begin{align*}
            Z_{1}(\hat{G}) - Z_{1}(G)
            = 2 \big( d_{G}(x) + d_{G}(y) - d_{G}(u) - d_{G}(v) \big) + 4. 
        \end{align*}
        \item\label{FACT:Z_{1}(G)-del-uv-add-xy-b} If $\{u,v\} \cap \{x, y\} \neq \emptyset$, then  
        \begin{align*}
            Z_{1}(\hat{G}) - Z_{1}(G)
            = 2 \big( d_{G}(x) + d_{G}(y) - d_{G}(u) - d_{G}(v) \big) + 2. 
        \end{align*}
    \end{enumerate}
\end{fact}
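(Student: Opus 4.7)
The plan is a direct calculation from the definition $Z_1(G) = \sum_{v} d_G(v)^2$, exploiting the fact that the move $G \mapsto \hat{G}$ alters degrees only at vertices in $\{u,v,x,y\}$. Thus
\begin{align*}
Z_1(\hat{G}) - Z_1(G) = \sum_{w \in \{u,v,x,y\}} \bigl( d_{\hat{G}}(w)^2 - d_G(w)^2 \bigr),
\end{align*}
and each summand is either $0$ (when the degree of $w$ is unchanged) or of the form $(a \pm 1)^2 - a^2 = \pm 2a + 1$. The whole argument is bookkeeping; the only subtle point is counting which of the four vertices actually sees its degree change.

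In case~\ref{FACT:Z_{1}(G)-del-uv-add-xy-a} the four vertices are pairwise distinct, so removing $uv$ decreases $d(u)$ and $d(v)$ by one each while adding $xy$ increases $d(x)$ and $d(y)$ by one each. All four contributions are nonzero, and adding the resulting terms of the form $\pm 2a + 1$ yields the desired $2(d_G(x) + d_G(y) - d_G(u) - d_G(v)) + 4$.

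In case~\ref{FACT:Z_{1}(G)-del-uv-add-xy-b} I would first observe that since $uv \in G$ and $xy \notin G$ the two edges are distinct as sets, so $|\{u,v\} \cap \{x,y\}| = 1$; after relabeling, assume $u = x$ and $v \neq y$. The shared vertex then loses one neighbor (namely $v$) and gains one (namely $y$), so $d_{\hat{G}}(u) = d_G(u)$ and its term in the sum vanishes. Only the contributions at $v$ and $y$ survive, producing $2(d_G(y) - d_G(v)) + 2$, which agrees with the claimed formula since $d_G(x) = d_G(u)$ under the relabeling. The drop from $+4$ in case~\ref{FACT:Z_{1}(G)-del-uv-add-xy-a} to $+2$ here is precisely the contribution that cancels at the shared vertex, and I do not anticipate any genuine obstacle beyond this verification.
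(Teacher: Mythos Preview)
Your proof is correct; the paper states this as a Fact without proof, and your direct computation from $Z_1(G)=\sum_w d_G(w)^2$ is exactly the intended (and essentially only) argument. Your observation that $|\{u,v\}\cap\{x,y\}|=1$ in case~\ref{FACT:Z_{1}(G)-del-uv-add-xy-b}, because $uv\in G$ while $xy\notin G$ forces the two edges to be distinct, is the one point worth making explicit, and you handle it cleanly.
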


A simple calculation yields the following fact. 

\begin{fact}\label{FACT:Z_1(B_1)-and-Z_1(B_2)}
    We have 
    \begin{align*}
    Z_{1} \big(B_1(r,s,m,\ell,k)\big) 
    = \ell k^2 + 2m \ell - k \ell^2 + Z_{1}\big( B(r-\ell, k, m - k\ell) \big), 
    \end{align*}
    and
    \begin{align*}
    Z_{1} \big(B_2(r,s,m,\ell,k)\big) 
    = \ell k^2 + 2m \ell - k \ell^2 + Z_{1}\big( B(k, r-\ell, m - k\ell) \big).
    \end{align*}
\end{fact}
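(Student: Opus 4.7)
The plan is to verify both identities by direct computation from the explicit degree sequences dictated by the constructions. The key structural observation is that $B_1(r,s,m,\ell,k)$ is the edge-disjoint union of the complete bipartite graph $K[\{u_1,\ldots,u_\ell\},\{w_1,\ldots,w_k\}]$ with a copy of $B(r-\ell, k, m-k\ell)$ placed on $\{u_{\ell+1},\ldots,u_r\}$ and $\{w_1,\ldots,w_k\}$; similarly, $B_2(r,s,m,\ell,k)$ is the edge-disjoint union of the same $K_{\ell,k}$ with a copy of $B(k, r-\ell, m-k\ell)$, where now $\{w_1,\ldots,w_k\}$ plays the role of the $U$-side and $\{u_{\ell+1},\ldots,u_r\}$ the role of the $W$-side. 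Both identifications are immediate from the uniqueness of Euclidean division, which ensures that the parameter pair $(p,q)$ used in defining $B_i$ coincides with the pair used in defining the corresponding $B(\cdot,\cdot,\cdot)$.

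Having fixed this decomposition, I would compute $Z_1(B_1) - Z_1\bigl(B(r-\ell, k, m-k\ell)\bigr)$ vertex by vertex. The $\ell$ new vertices $u_1,\ldots,u_\ell$ have degree $k$ in $B_1$ and are absent from $B(r-\ell,k,m-k\ell)$, contributing $\ell k^2$ to the difference. The vertices $u_{\ell+1},\ldots,u_r$ retain the same degree in both graphs and therefore contribute nothing. Each $w_j$ with $j\in[k]$ has its degree shifted by exactly $\ell$, so the algebraic identity $(d+\ell)^2-d^2=2\ell d+\ell^2$, summed over $j \in [k]$, yields a contribution of $2\ell \sum_{j \in [k]} d_{B(r-\ell,k,m-k\ell)}(w_j) + k\ell^2 = 2\ell(m-k\ell) + k\ell^2$, using that $B(r-\ell,k,m-k\ell)$ has $m-k\ell$ edges and all of them are incident to $\{w_1,\ldots,w_k\}$. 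Combining the two contributions gives $\ell k^2 + 2m\ell - k\ell^2$, the first claimed identity.

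The argument for $B_2$ is identical in spirit, with the role of the $\ell$-shifted side now played by $\{w_1,\ldots,w_k\}$, which is the $k$-vertex side of $B(k, r-\ell, m-k\ell)$ and therefore still carries all $m-k\ell$ edges. The same identity $(d+\ell)^2-d^2=2\ell d+\ell^2$ produces the same total $\ell k^2 + 2m\ell - k\ell^2$. The whole proof is essentially bookkeeping, and no genuine obstacle arises; the only point requiring care is confirming that the parameters $(p,q)$ in each $B_i$ match those in the corresponding $B(\cdot,\cdot,\cdot)$, which is automatic from the range restrictions on $q$ in the two definitions.
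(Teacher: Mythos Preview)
Your proposal is correct and matches what the paper has in mind: the paper records this fact with the single sentence ``By simple calculations, we have the following fact'' and gives no further argument, so your direct degree-by-degree verification via the decomposition $B_i = K_{\ell,k} \cup B(\cdot,\cdot,m-k\ell)$ is exactly the intended (and only reasonable) route. The one check you flag---that the Euclidean-division parameters $(p,q)$ agree between the definitions of $B_i$ and the corresponding $B(\cdot,\cdot,\cdot)$---is indeed the only point requiring a moment's thought, and it holds because the range constraint on $q$ is the same in both places.
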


\section{Proof of Theorem~\ref{Thm:biparite-ell-k-Z_{1}(G)}}\label{SEC:proof-bipartite}
In this section, we present the proof of Theorem~\ref{Thm:biparite-ell-k-Z_{1}(G)}. 
The proof of Theorem~\ref{Thm:biparite-k-ell-Z_{1}(G)} is essentially identical to that of Theorem~\ref{Thm:biparite-ell-k-Z_{1}(G)}, and is therefore omitted. 

Let $\mathcal{B}(r,s,\ell, k,m)$ be the collection of all $r \times s$ bipartite graphs $G = G[U,W]$ with exactly $m$ edges such that there exists a subset $I \subseteq U$ with $|I| = \ell$ and $\delta_{G}(I) \ge k$. 
Define 
\begin{align*}
    \phi(r,s,\ell, k,m)
    & \coloneqq \max\big\{ Z_{1}(G) \colon G \in \mathcal{B}(r,s,\ell, k,m) \big\}  \quad\text{and}\quad \\[0.3em]
    \Phi(r,s,\ell, k,m)
    & \coloneqq \big\{ G \in \mathcal{B}(r,s,\ell, k,m) \colon Z_{1}(G) = \phi(r,s,\ell, k,m) \big\}. 
\end{align*}

\begin{proof}[Proof of Theorem \ref{Thm:biparite-ell-k-Z_{1}(G)}]
    Fix integers $r, s, \ell, k, m$ with $r \ge s$, $\ell \ge k$, $r \ge \ell$, $s \ge k$, and $k \ell \le m \le rs$.   
    Let $G = G[U, W]$ be an $r \times s$ bipartite graph with $m$ edges, and let $I \subseteq U$ be a subset with $|I| = \ell$ and $\delta_{G}(I) \ge k$. 
    By~\eqref{equ:Zagreb-index-S2}, it suffices to show that 
    \begin{align*}
        Z_{1}(G)
        \le 
        \begin{cases}
            Z_{1}\big(B_1(r,s,m,\ell,k) \big), &\quad\text{if}\quad m \le rk ~\text{and}~ k + \ell \le r, \\[0.5em]
            Z_{1}\big(B_2(r,s,m,\ell,k) \big), &\quad\text{if}\quad m \le rk ~\text{and}~ k + \ell > r, \\[0.5em]
            Z_{1}\big(B(r,s,m) \big), &\quad\text{if}\quad m \ge r k. 
        \end{cases}
    \end{align*}
    We may assume that $G \in \Phi(r,s,\ell, k,m)$, that is, $G$ is an extremal graph with respect to $Z_{1}(G)$. 
    If $m \ge r k$, then the theorem follows from~\eqref{equ:Zagreb-index-S2} and Theorem~\ref{Thm:AK78-bipartite}. 
    Thus, we may assume that $m \le rk$. 

    Set $U = \{u_1, \ldots, u_{r}\}$ and $W = \{w_1, \ldots, w_{s}\}$. 
    By relabeling the vertices, we may assume that 
    \begin{align}\label{equ:degree-sequence-bipartite-decending}
        d_{G}(u_1) \ge \cdots \ge d_{G}(u_{r})
        \quad\text{and}\quad 
        d_{G}(w_1) \ge \cdots \ge d_{G}(w_{s}). 
    \end{align}
    We may assume that $I = \{u_1, \ldots, u_{\ell}\}$. 
    It follows from the assumption on $I$ that 
    \begin{align}\label{equ:degree-sequence-I-bipartite}
        d_{G}(u_1) \ge \cdots \ge d_{G}(u_{\ell}) \ge k. 
    \end{align}

    The proof proceeds by induction on $k$. 
    The base case $k=1$ holds by~\eqref{equ:Zagreb-index-S2} and Theorem~\ref{Thm:AK78-bipartite}. 
    Now assume that $k \ge 2$. 
    The following claim is an easy consequence of Fact~\ref{FACT:Z_{1}(G)-del-uv-add-xy} and assumption~\eqref{equ:degree-sequence-bipartite-decending}. 

    \begin{claim}\label{Claim:complete-bipartite-graph-bi}
        Suppose that $u_iw_j \in G$ for some $(i,j) \in [r] \times [s]$. Then $u_{i}w_{j'} \in G$ for every $j' \in [j-1]$. 
        Consequently, 
        \begin{align}\label{equ:Nws-subseteq}
            N_{G}(w_1) \supseteq \cdots \supseteq N_{G}(w_{s}).  
        \end{align} 
    \end{claim}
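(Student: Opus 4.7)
The plan is to establish the first part of the claim by a single-edge shifting argument that exploits the extremality of $G \in \Phi(r,s,\ell,k,m)$ with respect to $Z_{1}$, and then to deduce~\eqref{equ:Nws-subseteq} as an immediate consequence.

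Assume for contradiction that there exist $i \in [r]$ and indices $j' < j$ in $[s]$ with $u_i w_j \in G$ but $u_i w_{j'} \notin G$. I would consider the modified graph $\hat{G} \coloneqq (G \setminus \{u_iw_j\}) \cup \{u_iw_{j'}\}$ and first verify that $\hat{G} \in \mathcal{B}(r,s,\ell,k,m)$: clearly $|\hat{G}| = m$, $\hat{G}$ remains bipartite with parts $U$ and $W$, and the swap alters only edges incident to the single $U$-vertex $u_i$, preserving $d_{G}(u_i)$ itself. Hence every $U$-degree is unchanged, so $I$ is still an independent set in $\hat{G}$ with $\delta_{\hat{G}}(I) \ge k$, confirming $\hat{G} \in \mathcal{B}(r,s,\ell,k,m)$.

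Next, I would apply case (ii) of Fact~\ref{FACT:Z_{1}(G)-del-uv-add-xy} with $u = u_i$, $v = w_j$, $x = u_i$, $y = w_{j'}$ (so $\{u,v\} \cap \{x,y\} = \{u_i\} \neq \emptyset$), which yields
\begin{align*}
    Z_{1}(\hat{G}) - Z_{1}(G)
    = 2 \bigl( d_{G}(w_{j'}) - d_{G}(w_{j}) \bigr) + 2 \ge 2,
\end{align*}
where the inequality follows from~\eqref{equ:degree-sequence-bipartite-decending} together with $j' < j$. This contradicts $G \in \Phi(r,s,\ell,k,m)$, proving the first part. The chain~\eqref{equ:Nws-subseteq} is then immediate: if $u_{i} \in N_{G}(w_{j}) \setminus N_{G}(w_{j-1})$ for some $j \ge 2$, applying the first part to $(i,j,j-1)$ gives a contradiction, so $N_{G}(w_{j-1}) \supseteq N_{G}(w_{j})$ for every $j \ge 2$.

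There is essentially no serious obstacle here — the entire argument reduces to a one-edge monotonicity shift plus the routine check that membership in $\mathcal{B}(r,s,\ell,k,m)$ is preserved. The latter is automatic because only the neighborhood of $u_i$ is altered while $d_{G}(u_i)$ itself stays fixed, so the independent set $I$ continues to witness the degree condition in $\hat{G}$.
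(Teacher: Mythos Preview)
Your proof is correct and follows essentially the same approach as the paper: a single-edge swap $u_iw_j \mapsto u_iw_{j'}$, the observation that all $U$-degrees (hence the witness set $I$) are preserved, and Fact~\ref{FACT:Z_{1}(G)-del-uv-add-xy}\ref{FACT:Z_{1}(G)-del-uv-add-xy-b} together with~\eqref{equ:degree-sequence-bipartite-decending} to obtain a strict increase in $Z_1$, contradicting extremality. The only cosmetic difference is that you verify membership in $\mathcal{B}(r,s,\ell,k,m)$ before computing the $Z_1$-change, whereas the paper does these in the opposite order.
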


    \begin{proof}[Proof of Claim~\ref{Claim:complete-bipartite-graph-bi}]
    Suppose to the contrary that $u_iw_j \in G$ for some $(i,j) \in [r] \times [s]$, but $u_{i}w_{j'} \not\in G$ for some $j' \in [j-1]$.  
        Let $\hat{G} \coloneqq \big(G \setminus \{u_iw_j\}\big) \cup \{u_{i}w_{j'}\}$. 
        It follows from Fact~\ref{FACT:Z_{1}(G)-del-uv-add-xy} and assumption~\eqref{equ:degree-sequence-bipartite-decending} that 
        \begin{align*}
            Z_{1}(\hat{G}) - Z_{1}(G)
            = 2\big(d_{G}(w_{j'}) - d_{G}(w_j)\big) + 2 
            > 0. 
        \end{align*}
        Therefore, to derive a contradiction, it suffices to show that $\hat{G} \in \mathcal{B}(r,s,\ell,k,m)$. 
        Note that $d_{G}(u_i) = d_{\hat{G}}(u_i)$ for every $u_i \in U$. 
        Therefore, by~\eqref{equ:degree-sequence-I-bipartite}, we have $d_{\hat{G}}(u_i) \ge k$ for every $i \in [\ell]$, and then $\hat{G} \in \mathcal{B}(r,s,\ell,k,m)$. 
        This completes the proof of Claim~\ref{Claim:complete-bipartite-graph-bi}. 
    \end{proof}

    It follows from Claim~\ref{Claim:complete-bipartite-graph-bi} that for every $u_i \in U$, 
    \begin{align}\label{equ:bipartite-neighbor-ui}
        N_{G}(u_i) = \big\{w_1, \ldots, w_{d_{G}(u_i)}\big\}. 
    \end{align}
    Combining it with assumption~\eqref{equ:degree-sequence-bipartite-decending}, we obtain 
    \begin{align}\label{equ:Nur-subseteq}
        N_{G}(u_1) \supseteq \cdots \supseteq N_{G}(u_{r}). 
    \end{align}
    Additionally, it follows from Claim~\ref{Claim:complete-bipartite-graph-bi} and~\eqref{equ:Nur-subseteq} that for every $w_j \in W$, 
    \begin{align}\label{equ:bipartite-neighbor-wj}
        N_{G}(w_j) = \big\{u_1, \ldots, u_{d_{G}(w_j)}\big\}. 
    \end{align}
    
    Let $A \coloneq U \setminus N_{G}(w_1)$ and $B \coloneq W \setminus N_{G}(u_1)$. 
    It follows from~\eqref{equ:Nws-subseteq} and~\eqref{equ:Nur-subseteq} that both $A$ and $B$ are isolated (a set $S \subseteq V(G)$ is \emph{isolated} if $d_{G}(v) = 0$ for every $v \in S$). 
    Let $J \coloneqq \{w_1, \ldots, w_{k}\}$. 
    It follows from~\eqref{equ:degree-sequence-I-bipartite} and~\eqref{equ:bipartite-neighbor-ui} that 
    \begin{align*}
        K[I, J] \subseteq G. 
    \end{align*}

    \begin{claim}\label{CLAIM:dw1>=du1}
        We may assume that $\Delta_{G, \mathrm{right}} \ge \Delta_{G, \mathrm{left}}$.  
    \end{claim}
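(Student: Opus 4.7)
The plan is to prove the claim by an edge-swap argument combined with a secondary maximality choice. Suppose for contradiction that $d^* := d_G(u_1) > d_G(w_1) =: d^{**}$. Using $K[I, J] \subseteq G$, established just above the claim, every $w \in J$ satisfies $d_G(w) \ge \ell$, so $d^{**} \ge \ell$; combined with $d^* \le s$ this gives $\ell \le d^{**} < d^* \le s \le r$. In particular, $u_{d^{**}+1}$ exists and is an isolated vertex of $G$ by~\eqref{equ:bipartite-neighbor-wj}, while $w_{d^*} \in N_G(u_1)$ by~\eqref{equ:bipartite-neighbor-ui}.

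Consider the single-edge swap that deletes $\{u_1, w_{d^*}\}$ and adds $\{u_{d^{**}+1}, w_1\}$, producing a graph $\hat G$. Since the only vertex of $I$ whose degree changes is $u_1$, with $d_{\hat G}(u_1) = d^* - 1 \ge d^{**} \ge k$, we have $\hat G \in \mathcal{B}(r,s,\ell,k,m)$. Fact~\ref{FACT:Z_{1}(G)-del-uv-add-xy} yields
\[
Z_1(\hat G) - Z_1(G) = 2\bigl(d^{**} - d^* - d_G(w_{d^*})\bigr) + 4 \le 0,
\]
since $d^* \ge d^{**}+1$ and $d_G(w_{d^*}) \ge 1$. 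Extremality of $G$ forces equality, so $d^* = d^{**}+1$ and $d_G(w_{d^*}) = 1$. Then $\hat G$ is also extremal and, after re-sorting its degree sequences, $\Delta_{\hat G,\mathrm{right}}$ strictly exceeds $\Delta_{G,\mathrm{right}}$. If at the outset we pre-select $G$ within $\Phi(r,s,\ell,k,m)$ to maximize $\Delta_{G,\mathrm{right}}$, this contradicts the maximality and justifies the WLOG assumption.

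The main obstacle is the case $d^* \ge d^{**}+2$, in which the single-edge swap above strictly decreases $Z_1$ and thus cannot be used directly. I expect to handle it by a more global rearrangement: transpose the degree sequences of $U$ and $W$ (padding the longer side with isolated vertices), which is well-defined precisely because $d^{**} < d^* \le s$ ensures both sequences fit on the opposite side. The transposed graph $\tilde G$ satisfies $Z_1(\tilde G) = Z_1(G)$ and $\Delta_{\tilde G,\mathrm{right}} > \Delta_{\tilde G,\mathrm{left}}$ by construction. The nontrivial step is verifying $\tilde G \in \mathcal{B}(r,s,\ell,k,m)$, i.e., exhibiting an independent set of size $\ell$ in the new left part with minimum degree at least $k$; when $\ell = k$ this is immediate from $K[I,J] \subseteq G$, while the case $\ell > k$ reduces to showing $d_G(w_\ell) \ge k$, which is the principal technical point and may require invoking the extremality of $G$ or the inductive hypothesis on $k$.
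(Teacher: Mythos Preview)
Your proposal has a genuine gap, and you correctly locate it yourself: the full transposition requires the new left part to contain $\ell$ vertices of degree at least $k$, which amounts to $d_G(w_\ell) \ge k$. By the nested-neighbourhood structure~\eqref{equ:bipartite-neighbor-ui}--\eqref{equ:bipartite-neighbor-wj}, this is equivalent to $d_G(u_k) \ge \ell$, and in that regime your transposition coincides with the paper's first case. But when $d_G(u_k) \le \ell-1$ you have nothing: neither extremality nor the inductive hypothesis on $k-1$ yields $d_G(w_\ell)\ge k$ (note that the extremal graph $B_1(r,s,m,\ell,k)$ itself has $d(w_j)=0$ for all $j>k$, so the inequality can fail badly within $\Phi(r,s,\ell,k,m)$). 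The paper handles this case with a \emph{partial} transposition rather than a full one: using $d_G(w_k)\ge \ell > d_G(u_k)$, take the least $t\in[2,k]$ with $d_G(w_t)\ge d_G(u_t)$, set $T\coloneqq d_G(w_t)\ge \ell$, and for each $j\in[t-1]$ swap the tails of the neighbourhoods of $u_j$ and $w_j$ beyond level $T$. The net effect on degrees is to interchange $d(u_i)\leftrightarrow d(w_i)$ for $i\in[t-1]\cup[T+1,s]$ while leaving them fixed on $[t,T]$; this preserves $|G|$ and $Z_1$, swaps $\Delta_{\mathrm{left}}$ with $\Delta_{\mathrm{right}}$, and crucially keeps $\delta(I)\ge k$, since for $i<t$ the new degree is $d_G(w_i)\ge T\ge k$ while for $i\in[t,\ell]\subseteq[t,T]$ the degree is unchanged at $d_G(u_i)\ge k$.

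Separately, your single-edge swap paragraph contributes nothing and contains a logical slip. The sentence ``Extremality of $G$ forces equality'' is backwards: extremality says $Z_1(G)$ is maximal, so $Z_1(\hat G)\le Z_1(G)$ is automatic and perfectly compatible with strict inequality. You therefore cannot conclude $d^*=d^{**}+1$ and $d_G(w_{d^*})=1$; rather, the swap simply fails to produce another extremal graph whenever the inequality is strict, which is the generic situation. The secondary maximization of $\Delta_{G,\mathrm{right}}$ is indeed the right framing (the paper uses it too, after this claim), but the single-edge move does not exploit it.
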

    \begin{proof}[Proof of Claim~\ref{CLAIM:dw1>=du1}]
        Suppose that $\Delta_{G, \mathrm{right}} < \Delta_{G, \mathrm{left}}$, that is, $d_{G}(w_1) < d_{G}(u_1)$.  
        We will construct a new graph $G^{\ast} \in \Phi(r,s,\ell,k,m)$ with $\Delta_{G^{\ast}, \mathrm{right}} \ge \Delta_{G^{\ast}, \mathrm{left}}$. 

        First, suppose that $d_{G}(u_{k}) \ge \ell$. Then it follows from~\eqref{equ:bipartite-neighbor-ui} and~\eqref{equ:Nur-subseteq} that $d_{G}(w_j) \ge k$ for every $j \in [\ell]$. 
        Since $d_{G}(w_1) \le d_{G}(u_1) - 1 \le s - 1$, it follows from~\eqref{equ:Nws-subseteq} and~\eqref{equ:bipartite-neighbor-wj} that the set $\{u_s, \ldots, u_{r}\}$ is isolated. 
        Let $\hat{U} \coloneqq \{u_1, \ldots, u_{s}\}$ and $\hat{W} \coloneqq W \cup \{u_{s+1}, \ldots, u_{r}\}$. 
        Let $G^{\ast} = G^{\ast}[\hat{W}, \hat{U}]$. 
        Then it follows from the argument above that $G^{\ast} \in \Phi(r,s,\ell,k,m)$ and $\Delta_{G^{\ast}, \mathrm{right}} = \Delta_{G, \mathrm{left}} > \Delta_{G, \mathrm{right}} = \Delta_{G^{\ast}, \mathrm{left}}$. 

        Now suppose that $d_{G}(u_{k}) \le \ell - 1$.
        Then it follows from $d_{G}(w_{k}) \ge \ell$ that $d_{G}(w_{k}) > d_{G}(u_{k})$. 
        Let $t \in [k]$ be the smallest index such that $d_{G}(w_t) \ge d_{G}(u_t)$. 
        It follows from the assumption $d_{G}(w_1) < d_{G}(u_1)$ that $t \in [2,k]$. 
        Let $T \coloneqq d_{G}(w_t)$. 
        Then it follows from~\eqref{equ:degree-sequence-bipartite-decending} that $T = d_{G}(w_t) \ge d_{G}(w_{k}) \ge \ell$. 

        It follows from~\eqref{equ:Nws-subseteq} that 
        \begin{align*}
            K[\{w_1, \ldots, w_{t-1}\}, \{u_1,\ldots,u_{T}\}]
            \subseteq G. 
        \end{align*}

        It follows from the definition of $t$ that $d_{G}(u_{t-1}) > d_{G}(w_{t-1}) \ge d_{G}(w_{t}) = T$, and hence, 
        \begin{align*}
            K[\{u_1, \ldots, u_{t-1}\}, \{w_1, \ldots, w_{T}\}]
            \subseteq G. 
        \end{align*}

        It follows from the definition of $T$ that $\{w_{t}, u_{T+1}\} \not\in G$, and it follows from the fact $d_{G}(u_t) \le d_{G}(w_t)$ that $\{u_{t}, w_{T+1}\} \not\in G$. 

        We now construct $G^{\ast}$ in the following way 
        \begin{enumerate}[label=(\roman*)]
            \item For every $j \in [t-1]$, delete from $G$ the edge set 
            \begin{align*}
                \big\{u_i w_j \colon i \in [T+1, d_{G}(w_j)] \big\} \cup \big\{u_j w_i \colon i \in [T+1, d_{G}(u_j)] \big\}. 
            \end{align*}
            \item Then for every $j \in [t-1]$, add the following sets into the edge set 
            \begin{align*}
                \big\{u_i w_j \colon i \in [T+1, d_{G}(u_j)] \big\} \cup \big\{u_j w_i \colon i \in [T+1, d_{G}(w_j)] \big\}. 
            \end{align*}
        \end{enumerate}

        It is clear from the definition above that 
        \begin{enumerate}[label=(\roman*)]
            \item \label{ITEM:u_1-change-w_1} $\big( d_{G^{\ast}}(u_i), d_{G^{\ast}}(w_i) \big) = \big( d_{G}(w_i), d_{G}(u_i) \big)$ for $i \in [t-1]$, 
            \item $\big( d_{G^{\ast}}(u_i), d_{G^{\ast}}(w_i) \big) = \big( d_{G}(u_i), d_{G}(w_i) \big)$ for $i \in [t, T]$, 
            \item $\big( d_{G^{\ast}}(u_i), d_{G^{\ast}}(w_i) \big) = \big( d_{G}(w_i), d_{G}(u_i) \big)$ for $i \in [T+1, s]$. 
            \item $d_{G^{\ast}}(u_i) = d_{G}(u_i) = 0$ for $ i > s$. 
        \end{enumerate}
        Consequently, we have $|G^{\ast}| =|G|$, and $Z_{1}(G^{\ast})=Z_{1}(G)$. 
        For the set $I = \{u_1, \ldots, u_{\ell}\}$, since $t \le k \le \ell \le T$, we have 
        \begin{align*}
        \delta_{G^{\ast}}(I) 
        \ge \min\{d_{G}(w_{t-1}), d_{G}(u_{\ell})\} 
        \ge \min\{T, d_{G}(u_{\ell})\}
        \ge k. 
        \end{align*}
        Therefore, $G^{\ast} \in \Phi(r,s,\ell,k,m)$. 
        Besides, it follows~\ref{ITEM:u_1-change-w_1} that $\Delta_{G^{\ast}, \mathrm{right}} = \Delta_{G, \mathrm{left}} > \Delta_{G, \mathrm{right}} = \Delta_{G^{\ast}, \mathrm{left}}$. 
        This completes the proof of Claim~\ref{CLAIM:dw1>=du1}. 
\end{proof}

    By Claim~\ref{CLAIM:dw1>=du1}, we may additionally assume that $G$ satisfies $\Delta_{G, \mathrm{right}} \ge \Delta_{G, \mathrm{left}}$, and $\Delta_{G, \mathrm{right}}$ is maximized among all such graphs in $\Phi(r,s,\ell,k,m)$. 

    Recall that $A \coloneq U \setminus N_{G}(w_1)$ and $B \coloneq W \setminus N_{G}(u_1)$. 
    Let $U_1 \coloneqq U \setminus A$ and $W_1 \coloneqq W \setminus \big(B \cup \{w_1\}\big)$. 
    Let $G_1 \coloneqq G[U_1, W_1]$ denote the induced subgraph of $G$ on $U_1 \cup W_1$. 
    Since $A \cup B$ is isolated in $G$, we have $|G_1| = |G| - d_{G}(w_1) = m - d_{G}(w_1)$. 
    Moreover, 
    \begin{align}\label{equ:ZG-and-ZG1}
        Z_{1}(G) 
        & = \sum_{i=1}^{r}d_{G}(u_i)^2 + \sum_{j=1}^{s} d_{G}(w_j)^2 \notag \\ 
        & =\sum_{i=1}^{d_{G}(w_1)}d_{G}(u_i)^2 + \sum_{j=1}^{d_{G}(u_1)} d_{G}(w_j)^2 \notag \\
        & =\sum_{i=1}^{d_{G}(w_1)} \big( (d_{G}(u_i)-1)^2  + 2d_{G}(u_i) - 1 \big)  + \sum_{j=2}^{d_{G}(u_1)} d_{G}(w_j)^2 +  d_{G}(w_1)^2 \notag \\
        &= Z_{1}(G_1)+  2m  +d(w_1)^2 - d(w_1).
    \end{align}

    Let 
    \begin{align*}
        \big(\hat{r}, \hat{s}, \hat{\ell}, \hat{k}, \hat{m}\big)
        \coloneqq \big(d_{G}(w_1), d_{G}(u_1) - 1, \ell, k-1, m-d_{G}(w_1)  \big). 
    \end{align*}
    Note that $G_1 \in \mathcal{B}\big(\hat{r}, \hat{s}, \hat{\ell}, \hat{k}, \hat{m}\big)$, and it follows from Claim~\ref{CLAIM:dw1>=du1} that $\hat{r} = \Delta_{G, \mathrm{right}} \ge \Delta_{G, \mathrm{left}} = \hat{s} + 1$. 
    Since $\hat{k} = k-1$, it follows from the inductive hypothesis that
    \begin{align}\label{equ:induction-ZG1}
        Z_{1}(G_1) \le 
        \begin{cases}
            Z_{1}\big( B_1(\hat{r}, \hat{s}, \hat{m}, \hat{\ell}, \hat{k}) \big), &\quad\text{if}\quad \hat{m} \le \hat{r} \hat{k} ~\text{and}~ \hat{k} + \hat{\ell} \le \hat{r}, \\[0.5em]
            Z_{1}\big( B_2(\hat{r}, \hat{s}, \hat{m}, \hat{\ell}, \hat{k}) \big), &\quad\text{if}\quad \hat{m} \le \hat{r} \hat{k} ~\text{and}~ \hat{k} + \hat{\ell} > \hat{r}, \\[0.5em]
            Z_{1}\big( B(\hat{r}, \hat{s}, \hat{m}) \big), &\quad\text{if}\quad \hat{m} \ge \hat{r} \hat{k}. 
        \end{cases}
    \end{align}

    \begin{claim}\label{THM-bi-degree-du1}
        We may assume that $\Delta_{G, \mathrm{left}} = k$. 
    \end{claim}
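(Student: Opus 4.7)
The plan is to argue by contradiction: suppose $T := d_G(u_1) = \Delta_{G,\mathrm{left}} \ge k + 1$ and construct a graph $G^\ast \in \mathcal{B}(r,s,\ell,k,m)$ with $Z_1(G^\ast) > Z_1(G)$, contradicting $G \in \Phi(r,s,\ell,k,m)$. Set $R := d_G(w_1)$ and $x := d_G(w_T) \ge 1$; by Claim~\ref{CLAIM:dw1>=du1} we have $R \ge T$, and by Claim~\ref{Claim:complete-bipartite-graph-bi} together with~\eqref{equ:bipartite-neighbor-wj}, $N_G(w_T) = \{u_1, \ldots, u_x\}$ with $d_G(u_i) = T$ for every $i \in [x]$ (since $u_i w_T \in G$ forces $d_G(u_i) \ge T$, while $d_G(u_1) = T$ is the maximum left degree).

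The primary swap constructs $G^\ast$ from $G$ by deleting the $x$ edges $\{u_i w_T : i \in [x]\}$ and adding the $x$ new edges $\{u_{R+j} w_1 : j \in [x]\}$. When $R + x \le r$ the swap is well-defined; the set $I$ still satisfies $\delta_{G^\ast}(I) \ge T - 1 \ge k$, so $G^\ast \in \mathcal{B}(r,s,\ell,k,m)$, and a direct computation using Fact~\ref{FACT:Z_{1}(G)-del-uv-add-xy} gives
\[
Z_1(G^\ast) - Z_1(G) \;=\; 2x(R - T + 1) \;\ge\; 2x \;>\; 0,
\]
the required contradiction.

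When $R + x > r$ the $x$ new edges cannot all be absorbed by $w_1$. Let $T^\ast := \max\{j : d_G(w_j) = r\}$, with $T^\ast = 0$ if $w_1$ itself is not saturated. Since every saturated column is adjacent to all of $U$, one has $d_G(u_i) \ge T^\ast$ for every $i \in [r]$; combining this with $d_G(u_i) = T$ on $[x]$ and the edge bound $m \le rk$ yields the structural inequality $x(T - T^\ast) \le r(k - T^\ast)$, which in particular forces $T^\ast < T$. Redefine the swap to distribute the $x$ new edges greedily among $w_1, w_2, \ldots$, filling each column in turn to capacity $r$ (and attaching each new edge to a $U$-vertex of currently highest feasible degree). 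A telescoping application of Fact~\ref{FACT:Z_{1}(G)-del-uv-add-xy}, using the monotonicity $d_G(w_1) \ge d_G(w_2) \ge \cdots \ge d_G(w_T) = x$, shows that the total $Z_1$-change remains strictly positive, again contradicting the extremality of $G$.

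The main technical obstacle lies in verifying this positivity in the saturated sub-case $R + x > r$: the greedy redistribution spreads edges over several target columns, and the per-column contributions to the $Z_1$-change depend non-trivially on both the column's current degree and on the degree of the $U$-vertex absorbing each new edge. Closing this gap requires combining the monotonicity of column degrees, the saturation-induced lower bound $d_G(u_i) \ge T^\ast$, and the structural inequality $x(T - T^\ast) \le r(k - T^\ast)$ from the edge budget, organised by a short case analysis on whether $T^\ast + 1 = T$ (a degenerate configuration essentially forced to match the $B_2$ template) or $T^\ast + 1 < T$ (where the target columns strictly dominate $d_G(w_T)$).
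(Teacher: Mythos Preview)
Your approach is genuinely different from the paper's, and the unsaturated case $R+x\le r$ is clean and correct: the swap you describe produces a graph in $\mathcal{B}(r,s,\ell,k,m)$ (since $R\ge\ell$, the new degree-$1$ vertices $u_{R+1},\dots,u_{R+x}$ lie outside $I$, and every $u_i\in I$ keeps degree $\ge T-1\ge k$), and the computation $Z_1(G^\ast)-Z_1(G)=2x(R-T+1)\ge 2x>0$ is right.

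The gap is the saturated case $R+x>r$. Your own closing paragraph is essentially an outline of unfinished work: you introduce $T^\ast$, assert a ``telescoping application of Fact~\ref{FACT:Z_{1}(G)-del-uv-add-xy}'', and then list the ingredients that ``closing this gap requires''. None of this is actually carried out. The greedy redistribution spreads the $x$ new edges over several columns $w_1,w_2,\ldots$, and the $Z_1$-contribution of each added edge depends on the current degree of both endpoints at the moment of insertion; the bookkeeping is not trivial and your inequality $x(T-T^\ast)\le r(k-T^\ast)$, while correct, is not shown to force positivity. The case $R+x>r$ does occur under all the standing hypotheses (take $r=10$, $s=5$, $k=3$, $\ell=5$, $m=25$, left degree sequence $(4,4,4,4,4,4,1,0,0,0)$), so it cannot be dismissed.

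The paper sidesteps this difficulty entirely by a different mechanism. It uses the inductive hypothesis on $k$: remove $w_1$, apply the theorem with $\hat k=k-1$ to the remaining graph $G_1$, and use the identity in~\eqref{equ:ZG-and-ZG1}. If the smaller instance falls into the $\hat m\le\hat r\hat k$ branch, replacing $G_1$ by the corresponding $B_1$ or $B_2$ already yields an extremal graph with $\Delta_{\mathrm{left}}=\hat k+1=k$. If $\hat m>\hat r\hat k$, the paper exploits the extra assumption (stated just before this claim) that $\Delta_{G,\mathrm{right}}$ is \emph{maximised} among extremal graphs with $\Delta_{\mathrm{right}}\ge\Delta_{\mathrm{left}}$: it exhibits another extremal graph with strictly larger $\Delta_{\mathrm{right}}$, a contradiction. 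You never invoke this maximality hypothesis, which is precisely the leverage the paper uses to avoid the delicate redistribution analysis you leave open.
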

    
    \begin{proof}[Proof of Claim~\ref{THM-bi-degree-du1}]
        Suppose that $\hat{m} \le \hat{r} \hat{k}$. 
        Then let $G^{\ast}$ denote the graph obtained from $G$ by replacing $G_1$ with $B_1(\hat{r}, \hat{s}, \hat{\ell}, \hat{k}, \hat{m})$ or $B_2(\hat{r}, \hat{s}, \hat{\ell}, \hat{k}, \hat{m})$ (depending on whether $\hat{k} + \hat{\ell} \le \hat{r}$).
        It follows from the definition of $B_1(\cdot)$ and $B_{2}(\cdot)$ that $G^{\ast} \in \mathcal{B}(r, s, \ell, k, m)$. 
        Moreover, it follows from~\eqref{equ:ZG-and-ZG1} and~\eqref{equ:induction-ZG1} that $Z_{1}(G^{\ast}) \ge Z_{1}(G)$. 
        Therefore, $G^{\ast} \in \Phi(r,s,\ell,k,m)$. 
        Note that $\Delta_{G^{\ast}, \mathrm{left}} = \hat{k} +1 =k$. 
        We are done. 

        Now suppose that $\hat{r} \hat{k} < \hat{m} = m-d_{G}(w_1)$. 
        Then it follows that 
        \begin{align*}
            d_{G}(w_1)
            = \hat{r}
            < \frac{m}{\hat{k}+1}
            \le \frac{rk}{k}
            = r, 
        \end{align*}
        which implies that $d_{G}(w_1) \le r-1$. 
        
        Let $G^{\ast}$ be the graph obtained from $G$ by replacing $G_1$ with $B(\hat{r}, \hat{s}, \hat{m})$. 
        Then $|G^{\ast}| = |G|$. 
        Since $\hat{m} > \hat{r}\hat{k}$, the definition of $B(\hat{r}, \hat{s}, \hat{m})$ implies that there are at least $\hat{k}$ vertices in $W_1$ with degree $\hat{r} = d_{G}(w_1) \ge \ell$. 
        Together with the vertex $w_1$, there are at least $\hat{k} + 1 = k$ vertices in $W$ adjacent to every vertex in $I$. 
        Consequently, $\delta_{G^{\ast}}(I) \ge k$, which implies $G^{\ast} \in \mathcal{B}(r,s,\ell,k,m)$. 
        Moreover, it follows from~\eqref{equ:ZG-and-ZG1} and~\eqref{equ:induction-ZG1} that $Z_{1}(G^{\ast}) \ge Z_{1}(G)$. 
        Therefore, $G^{\ast} \in \Phi(r,s,\ell,k,m)$. 
        
        Let $p$ and $q$ be nonnegative integers such that $\hat{m} = p \hat{r} + q$ and $q \in [1, \hat{r}]$. 
        Since $\hat{m} > \hat{r} \hat{k}$, we have $p \ge \hat{k}$. 
        We now consider the two cases separately: $q \ge p+1$ and $q \le p$.         

        Suppose that $q \ge p+1$. 
        Then let $\hat{G}$ be the graph obtained from $G^{\ast}$ by removing the edge set $\{u_i w_{p+2} \colon i \in [q-p, q]\}$ and then adding the set $\{u_{\hat{r}+1} w_i \colon i \in [p+1]\}$. 
        It is clear that $|\hat{G}| = |G^{\ast}| = |G|$, and 
        \begin{align*}
            Z_{1}(\hat{G}) - Z_{1}(G^{\ast})
            & = \sum_{i \in [p+2]} \big( d_{\hat{G}}^{2}(w_i) - d_{G}^{2}(w_i) \big) \\
            & \qquad + \sum_{j \in [q-p, q]} \big( d_{\hat{G}}^{2}(u_j) - d_{G}^{2}(u_j) \big) + d_{\hat{G}}^{2}(u_{\hat{r}+1}) - d_{G}^{2}(u_{\hat{r}+1}) \\
            & = (p+1)\big( (\hat{r}+1)^2 - \hat{r}^2 \big) \\
            & \qquad + (q-p-1)^2 - q^2 + (p+1) \big( (p+1)^2 - (p+2)^2 \big) + (p+1)^2 \\
            & = 2(p+1)(\hat{r} - q)
            \ge 0. 
        \end{align*}
        For the set $I = \{u_1, \ldots,u_{\ell}\}$, we have $\delta_{\hat{G}}(I) \ge p+1 \ge \hat{k}+1 =k$. 
        Hence, $\hat{G} \in \mathcal{B}(r,s,\ell,k,m)$. 
        Moreover, it follows from $Z_{1}(\hat{G}) \ge Z_{1}(G^{\ast})$ that $\hat{G} \in \Phi(r,s,\ell,k,m)$. 
        Note that $\hat{G}$ also satisfies $\Delta_{\hat{G}, \mathrm{right}} \ge \Delta_{\hat{G}, \mathrm{left}}$. 
        However, $\Delta_{\hat{G},\mathrm{right}} = d_{\hat{G}}(w_1) = d_{G}(w_1) + 1 > \Delta_{G,\mathrm{right}}$, contradicting the maximality of $\Delta_{G,\mathrm{right}}$. 

        Now suppose that $q \le p$. 
        Similar to the argument above, let $\hat{G}$ be the graph obtained from $G^{\ast}$ by removing the edge set $\{u_i w_{p+2} \colon i \in [q]\}$ and then adding the set $\{u_{\hat{r}+1} w_j \colon j \in [q]\}$. 
        It is clear that $|\hat{G}| = |G^{\ast}| = |G|$, and 
        \begin{align*}
            Z_{1}(\hat{G}) - Z_{1}(G^{\ast})
            & = \sum_{i \in [q]} \big( d_{\hat{G}}^{2}(w_i) - d_{G}^{2}(w_i) \big) + d_{\hat{G}}^{2}(w_{p+2}) - d_{G}^{2}(w_{p+2}) \\
            & \qquad + \sum_{j \in [q-p, q]} \big( d_{\hat{G}}^{2}(u_j) - d_{G}^{2}(u_j) \big) + d_{\hat{G}}^{2}(u_{\hat{r}+1}) - d_{G}^{2}(u_{\hat{r}+1}) \\
            & = q \big( (\hat{r}+1)^2 - \hat{r}^2 \big) - q^2 + q \big( (p+1)^2 - (p+2)^2 \big) + q^2 \\
            & = 2q(\hat{r} - p-1) 
            \ge 0, 
        \end{align*}
        where the last inequality follows from Claim~\ref{CLAIM:dw1>=du1} that $p+1 \le d_{G}(u_1) \le d_{G}(w_1) = \hat{r}$. 
        For the set $I = \{u_1, \ldots,u_{\ell}\}$, we have $\delta_{\hat{G}}(I) \ge p+1 \ge \hat{k}+1 =k$. 
        Hence, $\hat{G} \in \mathcal{B}(r,s,\ell,k,m)$. 
        Moreover, it follows from $Z_{1}(\hat{G}) \ge Z_{1}(G^{\ast})$ that $\hat{G} \in \Phi(r,s,\ell,k,m)$. 
        Notice that $\hat{G}$ also satisfies $\Delta_{\hat{G}, \mathrm{right}} \ge \Delta_{\hat{G}, \mathrm{left}}$. 
        However, since $q \ge 1$, we have $\Delta_{\hat{G},\mathrm{right}} = \hat{r} + 1 > \hat{r} = \Delta_{G,\mathrm{right}}$, contradicting the maximality of $\Delta_{G,\mathrm{right}}$.  
    \end{proof}

    Fix $G$ such that additionally, $d_{G}(u_1) = \Delta_{G, \mathrm{left}} = k$. 
    Recall that the subgraph induced by $I\cup \{w_1,\ldots,w_{k}\}$ is an $\ell$ by $k$ complete bipartite graph. 
    Let $G_2$ be the subgraph induced by $\{u_{\ell +1}, \ldots, u_r\}\cup \{w_1,\ldots,w_{k}\}$. 
    It follows from~\eqref{equ:Zagreb-index-S2} and Theorem~\ref{Thm:AK78-bipartite} that 
    \begin{align*}
        Z_{1}(G_2) \le 
        \begin{cases}
            Z_{1}\big( B(r-\ell, k, m - k\ell) \big), & \quad\text{if}\quad r-\ell  \ge k, \\[0.5em]
            Z_{1}\big( B(k,r-\ell,m-k\ell) \big), & \quad\text{if}\quad r-\ell  \le k. 
        \end{cases}
    \end{align*}

    Since $d_{G}(u_1) =k$, we have $d_{G}(w_i) = 0$ for $i \ge k+1$. 
    It follows that 
    \begin{align*}
        Z_{1}(G) 
        &= \sum_{i=1}^{r} d_{G}(u_i)^2 + \sum_{j=1}^{k} d_{G}(w_j)^2 \\
        &= \sum_{i=1}^{\ell } d(u_i)^2 + \sum_{i=\ell +1}^{r} d(u_i)^2 + \sum_{j=1}^{k}(\ell +d(w_j)-\ell )^2 \\
        & = \ell  k^2+ 2m\ell - k \ell^2  + \sum_{i=\ell +1}^{r} d(u_i)^2 + \sum_{j=1}^{k} \big( d(w_j)-\ell \big)^2 \\
        & = \ell  k^2+ 2m\ell - k \ell^2 + Z_{1}(G_2),  
    \end{align*}
    which, by Fact~\ref{FACT:Z_1(B_1)-and-Z_1(B_2)}, proves Theorem~\ref{Thm:biparite-ell-k-Z_{1}(G)}. 
\end{proof}

\section{Proof of Theorem~\ref{Thm:Max-S2-alpha-alpha}}\label{Sec:note-general-graph-alpha-alpha}

In this section,we present the proof of Theorem~\ref{Thm:Max-S2-alpha-alpha}. 
Since we are concerned only with the asymptotic $S_2$-density as $n \to \infty$, we omit floor notation in the proof for simplicity. 

Recall that $\mathcal{G}\big(n,\rho \tbinom{n}{2}, \alpha n, \beta n \big)$ denote the collection of all $n$-vertex graphs $G$ with exactly $\rho \tbinom{n}{2}$ edges such that there exists an independent set $I$ of size $\alpha n$ in $G$ such that $\delta_{G}(I) \ge \beta n$. 
For convenience, we define 
\begin{align*}
    \phi(n, \rho, \alpha, \beta)
    & \coloneqq \max\big\{ Z_{1}(G) \colon G \in \mathcal{G}\big(n,\rho \tbinom{n}{2}, \alpha n, \beta n \big) \big\}  \quad\text{and}\quad \\[0.3em]
    \Phi(n, \rho, \alpha, \beta)
    & \coloneqq \big\{ G \in \mathcal{G}\big(n,\rho \tbinom{n}{2}, \alpha n, \beta n \big) \colon Z_{1}(G) = \phi(n, \rho, \alpha, \beta) \big\}. 
\end{align*}

\begin{proof}[Proof of Theorem~\ref{Thm:Max-S2-alpha-alpha}]
Let $n$ be sufficiently large, $\rho \in \left[\frac{17}{25}, \frac{7}{10} \right]$ and $\alpha \in \left[\frac{17}{100}, \frac{23}{100}\right]$. 
Let $G$ be an $n$-vertex graph with $\rho \tbinom{n}{2}$ edges, containing an independent set $I \subseteq V(G)$ with $|I| = \alpha n$ and $\delta_{G}(I) \ge \alpha n$. 
By the lower bound given in Fact~\ref{FACT:lower-bound} and~\eqref{equ:Zagreb-index-S2}, it suffices to show that  
\begin{align*}
Z_{1}(G) \le 
\left(\alpha^3+(\rho-\alpha^2) \sqrt{\rho+\alpha^2}\right) n^3 + o(n^3).  
\end{align*}

Assume that $G \in \Phi(n, \rho, \alpha, \alpha)$, i.e., $G$ is an extremal graph. 
Let $I = \{u_1,\ldots,u_{\alpha n}\}$ be the independent set of $G$, and let $V \coloneqq V(G) \setminus I=\{v_1,\ldots, v_{(1-\alpha)n}\}$. 
By relabeling the vertices, we may assume that 
\begin{align}\label{EQ:alpha-alpha-degree-sequence}
d(u_1) \ge \cdots \ge d(u_{\alpha n}), 
\quad \text{and} \quad
d(v_1) \ge \cdots \ge d(v_{(1-\alpha)n}). 
\end{align}
It follows from $\delta_{G}(I) \ge \alpha n$ that
\begin{align}\label{EQ:alpha-alpha-degree>alpha-n}
d(u_1) \ge \cdots \ge d(u_{\alpha n}) \ge \alpha n. 
\end{align}

The following claim follows from Fact~\ref{FACT:Z_{1}(G)-del-uv-add-xy} and assumption~\eqref{EQ:alpha-alpha-degree-sequence}, and its proof is analogous to that of Claim~\ref{Claim:complete-bipartite-graph-bi}, we therefore omit the details and leave it to the interested reader. 

\begin{claim}\label{Claim:assume-G-after-shifting}
The following statements hold.  
\begin{enumerate}[label=(\roman*)]
\item \label{ITEM:alpha-alpha-ui-vj-complete} If $u_iv_j \in G$, then  $u_iv_{j'} \in G$ for every $j' \in [j-1]$. 
\item \label{ITEM:alpha-alpha-vi-vj-complete} If $v_iv_j\in G$, then $v_{i'}v_{j'} \in G$ for all $i' \le i$, $j' \le j$, and $i' \neq j'$.
\end{enumerate}
\end{claim}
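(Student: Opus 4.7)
The plan is a straightforward shifting argument, entirely analogous to the proof of Claim~\ref{Claim:complete-bipartite-graph-bi}, relying on Fact~\ref{FACT:Z_{1}(G)-del-uv-add-xy} together with the ordering~\eqref{EQ:alpha-alpha-degree-sequence} and the extremality of $G$ in $\Phi(n,\rho,\alpha,\alpha)$. In each part I will assume the conclusion fails, perform a single edge swap, and check two things: (a) the swap strictly increases $Z_{1}$; and (b) the new graph still lies in $\mathcal{G}(n,\rho\binom{n}{2},\alpha n, \alpha n)$, contradicting $G \in \Phi(n,\rho,\alpha,\alpha)$.

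For part~\ref{ITEM:alpha-alpha-ui-vj-complete}, I would suppose $u_i v_j \in G$ but $u_i v_{j'} \notin G$ for some $j' < j$, and set $\hat G \coloneqq (G \setminus \{u_i v_j\}) \cup \{u_i v_{j'}\}$. Since $\{u_i,v_j\} \cap \{u_i,v_{j'}\} = \{u_i\} \neq \emptyset$, Fact~\ref{FACT:Z_{1}(G)-del-uv-add-xy}\ref{FACT:Z_{1}(G)-del-uv-add-xy-b} yields
\[
Z_{1}(\hat G) - Z_{1}(G) = 2\bigl(d_{G}(v_{j'}) - d_{G}(v_{j})\bigr) + 2 \ge 2,
\]
using $d_{G}(v_{j'}) \ge d_{G}(v_{j})$ from~\eqref{EQ:alpha-alpha-degree-sequence}. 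The swap is entirely between $u_i \in I$ and $V$, so the degrees of all vertices in $I$ are unchanged (the constraint $\delta_{\hat G}(I) \ge \alpha n$ is inherited from~\eqref{EQ:alpha-alpha-degree>alpha-n}) and $I$ remains independent. Thus $\hat G \in \mathcal{G}(n,\rho\binom{n}{2},\alpha n,\alpha n)$, contradicting $G \in \Phi(n,\rho,\alpha,\alpha)$.

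For part~\ref{ITEM:alpha-alpha-vi-vj-complete}, I would suppose $v_i v_j \in G$ with $i<j$ but $v_{i'} v_{j'} \notin G$ for some $i' \le i$, $j' \le j$ with $i' \neq j'$; by symmetry I may assume $i'<j'$. Setting $\hat G \coloneqq (G \setminus \{v_i v_j\}) \cup \{v_{i'} v_{j'}\}$, I apply the appropriate clause of Fact~\ref{FACT:Z_{1}(G)-del-uv-add-xy} (clause~\ref{FACT:Z_{1}(G)-del-uv-add-xy-a} when the two pairs are disjoint, and clause~\ref{FACT:Z_{1}(G)-del-uv-add-xy-b} when they share a vertex, e.g.\ $i'=i$ with $j'<j$, or $j'=j$ with $i'<i$). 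In every subcase,~\eqref{EQ:alpha-alpha-degree-sequence} gives $d_{G}(v_{i'}) \ge d_{G}(v_{i})$ and $d_{G}(v_{j'}) \ge d_{G}(v_{j})$, so the additive constant (either $+4$ or $+2$) makes $Z_{1}(\hat G) - Z_{1}(G) > 0$. Since all four vertices lie in $V$, the degrees on $I$ are untouched and $I$ is still independent, so again $\hat G$ lies in the admissible family, contradicting extremality.

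The only mildly delicate point is the bookkeeping in part~\ref{ITEM:alpha-alpha-vi-vj-complete}, where one must enumerate the overlap patterns between $\{i,j\}$ and $\{i',j'\}$ to select the correct clause of Fact~\ref{FACT:Z_{1}(G)-del-uv-add-xy}; however, in all these patterns the inequalities $d_G(v_{i'}) \ge d_G(v_i)$ and $d_G(v_{j'}) \ge d_G(v_j)$ (one of which becomes an equality when the corresponding indices coincide) combine with the positive constant to give a strict increase. Because there is no new ingredient beyond what appears in Claim~\ref{Claim:complete-bipartite-graph-bi}, I expect to state the details briefly and omit the routine verification, as the paper proposes.
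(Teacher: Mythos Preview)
Your proposal is correct and follows exactly the approach the paper indicates: the paper itself omits the proof entirely, stating only that it is analogous to Claim~\ref{Claim:complete-bipartite-graph-bi} via Fact~\ref{FACT:Z_{1}(G)-del-uv-add-xy} and~\eqref{EQ:alpha-alpha-degree-sequence}. Your edge-swap argument, together with the verification that $I$ remains an independent set with $\delta(I)\ge \alpha n$, is precisely the routine check the paper leaves to the reader.
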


It follows from Claim~\ref{Claim:assume-G-after-shifting}~\ref{ITEM:alpha-alpha-ui-vj-complete} that for every $u_i \in I$, 
\begin{align}\label{EQ:alpha-alpha-neigh-u_i}
N(u_i) = \big\{v_1, \ldots, v_{d(u_i)}\big\}. 
\end{align}
Combining it with assumption~\eqref{EQ:alpha-alpha-degree-sequence}, we obtain
\begin{align}\label{EQ:Nu1-contain-Nu2-alpha}
N(u_1) \supseteq \cdots \supseteq N(u_{\alpha n}). 
\end{align}

Additionally, it follows from Claim~\ref{Claim:assume-G-after-shifting}~\ref{ITEM:alpha-alpha-vi-vj-complete} that if $v_iv_j\in G$, then $v_{i'} v_j \in G$ for all $i' \le i$ with $i' \neq j$. 
It follows from~\eqref{EQ:alpha-alpha-degree>alpha-n} and~\eqref{EQ:alpha-alpha-neigh-u_i} that 
\begin{align*}
K[\{v_1, \ldots,v_{\alpha n}\}, \{u_1, \ldots,u_{\alpha n}\}] \subseteq G. 
\end{align*}

Denote the largest index $i \in [(1-\alpha)n]$ such that $v_{i-1}v_i\in  G$ by $\omega \coloneqq \omega(G)$. 
By the maximality of $\omega$ and Claim~\ref{Claim:assume-G-after-shifting}~\ref{ITEM:alpha-alpha-vi-vj-complete}, the subgraph induced by $V_1 \coloneqq  \{v_1,\ldots, v_{\omega}\}$ is complete, and the subgraph induced by $V_2\coloneqq  \{v_{\omega+1},\ldots, v_{(1-\alpha)n}\}$ is empty. 

Now we give the lower bound of $\omega$. 
Since $I$ is an independent set and $G[V_2]$ is empty, we have 
\begin{align*}
|G| 
= \rho \binom{n}{2} 
\le \binom{n}{2} - \binom{\alpha n}{2} -\binom{(1-\alpha)n -\omega}{2}
< \frac{1}{2}n^2 - \frac{1}{2}\alpha^2 n^2 -\frac{1}{2}\left((1-\alpha)n -\omega\right)^2, 
\end{align*}
equivalently, 
\begin{align*}
\omega 
> (1-\alpha)n - \left(n^2-\rho n^2 -\alpha^2 n^2 + \rho n\right)^{1/2}
= (1-\alpha)n - \left(1-\rho -\alpha^2 \right)^{1/2}n - o(n). 
\end{align*}
Recall that $\rho \in \left[\frac{17}{25}, \frac{7}{10} \right]$ and $\alpha \in \left[\frac{17}{100}, \frac{23}{100}\right]$.  
By some calculations, we have 
\begin{align*}
1-2 \alpha- \left(1-\rho -\alpha^2 \right)^{1/2} 
\ge 1-2\times \frac{23}{100}- \left(1-\frac{17}{25}-\left(\frac{23}{100}\right)^2 \right)^{1/2}
= \frac{54-\sqrt{2671}}{100} 
> \frac{1}{50}. 
\end{align*}
Thus, 
\begin{align}\label{EQ:lower-bound-omega}
\omega
\ge \alpha n + (1-2\alpha)n - \left(1-\rho -\alpha^2 \right)^{1/2}n - o(n)
\ge \alpha n + \frac{1}{50}n - o(n)
> \alpha n + \frac{1}{100}n. 
\end{align}

\begin{claim}\label{CLAIM:alpha-B_0-empty}
There exits a graph $G \in \Phi(n, \rho, \alpha, \alpha)$ such that $G[I \cup V_2]$ is empty. 
\end{claim}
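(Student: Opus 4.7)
The plan is to exhibit, for each graph $G\in\Phi(n,\rho,\alpha,\alpha)$ containing an edge between $I$ and $V_2$, an extremal graph with strictly fewer such edges, via a \emph{bulk swap} centered at the vertex $v_\omega$. Call $u_iv_j\in G$ \emph{bad} if $v_j\in V_2$; by~\eqref{EQ:alpha-alpha-neigh-u_i} this forces $d(u_i)\ge j>\omega$, so $u_i$ is adjacent to every vertex of $V_1$.

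A key preliminary observation is that $v_\omega$ has no neighbor in $V_2$: if $v_\omega v_{j'}\in G$ with $j'>\omega$, then Claim~\ref{Claim:assume-G-after-shifting}~\ref{ITEM:alpha-alpha-vi-vj-complete} applied with $(i,j)=(\omega,j')$ and $(i',j')=(\omega,\omega+1)$ yields $v_\omega v_{\omega+1}\in G$, contradicting the maximality of $\omega$. Consequently $v_\omega$ has exactly $\omega-1$ neighbors in $V$, and is adjacent in $G$ to exactly those $u_{i'}\in I$ with $d(u_{i'})\ge\omega$; writing $t\coloneqq|\{i':d(u_{i'})\ge\omega\}|$, we obtain $d(v_\omega)=\omega-1+t$.

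Assume $G$ is shifted and contains a bad edge, and let $u_1\in I$ satisfy $d(u_1)=D>\omega$ (so $t\ge 1$). The bad edges incident to $u_1$ are $\{u_1v_{\omega+1},\dots,u_1v_D\}$. Define $\hat G$ by removing these $D-\omega$ bad edges and inserting the $D-\omega$ non-edges $\{v_\omega v_{\omega+1},\dots,v_\omega v_D\}$. Post-swap, $d_{\hat G}(u_1)=\omega$, $d_{\hat G}(v_\omega)=t+D-1$, and $d_{\hat G}(v_j)=d_G(v_j)$ for every $j\in[\omega+1,D]$, while every other degree is unchanged. A direct computation gives
\[
Z_1(\hat G)-Z_1(G)=(\omega^2-D^2)+\bigl((t+D-1)^2-(t+\omega-1)^2\bigr)=2(D-\omega)(t-1)\ge 0.
\]
If $t\ge 2$ the inequality is strict, contradicting the extremality of $G$, so in fact $t=1$ and $\hat G$ is also extremal. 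Moreover $d_{\hat G}(u_1)=\omega>\alpha n$ by~\eqref{EQ:lower-bound-omega}, hence $\delta_{\hat G}(I)\ge\alpha n$ and $\hat G\in\Phi(n,\rho,\alpha,\alpha)$.

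The main subtlety I expect is that $\hat G$ need not itself be shifted, so one must re-apply Claim~\ref{Claim:assume-G-after-shifting} to $\hat G$. Because the shifting operations only relocate edges incident to $I$ toward lower-indexed vertices of $V$, and edges inside $V$ toward lexicographically earlier pairs, they cannot create any new edge between $I$ and the new $V_2$, and hence cannot increase the bad-edge count. Since each bulk swap strictly decreases the bad-edge count by $D-\omega\ge 1$, iterating this ``bulk swap followed by shifting'' procedure terminates after finitely many steps at an extremal graph containing no edge between $I$ and $V_2$, which is the desired conclusion.
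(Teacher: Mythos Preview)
Your proof is correct and follows essentially the same approach as the paper: both transfer $u_1$'s edges into $V_2$ onto a vertex of $V_1$ that has no $V_2$-neighbours, then check $Z_1$ does not decrease and $\delta(I)\ge\alpha n$ is preserved via~\eqref{EQ:lower-bound-omega}. The only difference is cosmetic: you swap onto $v_\omega$, whereas the paper swaps onto $v_{j_1}\coloneqq\min\{j:v_{\omega+1}v_j\notin G\}$; both targets have no $V_2$-neighbours and degree at least $\omega$, so both computations give a nonnegative increment.

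One remark: your discussion of ``re-shifting'' is slightly imprecise. Since $\hat G\in\Phi$, after merely re-sorting $I$ and $V$ by degree, $\hat G$ \emph{automatically} satisfies Claim~\ref{Claim:assume-G-after-shifting} (extremality forces the shifted structure; there is no separate operation to perform). The clean way to finish is to observe that $V_1$ is still a clique in $\hat G[V]$, so $\omega(\hat G)\ge\omega$; combined with your observation $t=1$ (which forces $\max_i d_{\hat G}(u_i)=\omega$), this gives $|B_0(\hat G)|=0$ in a single step, and no iteration is needed. Your derivation of $t=1$ is a nice touch that the paper leaves implicit.
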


\begin{proof}[Proof of Claim~\ref{CLAIM:alpha-B_0-empty}]
Let $B_0\coloneqq  G[I\cup V_2]$. 
Since $I$ is an independent set and $G[V_2]$ is empty, $B_0$ is bipartite graph. 
We now show that there exits a graph $G \in \Phi(n, \rho, \alpha, \alpha)$ such that $|B_0| = 0$. 
Assume, for contradiction, that no such graph $G$ exists. 
We choose $G\in \Phi(n, \rho, \alpha, \alpha)$ such that $|B_0|$ is as small as possible. Then $|B_0|\ge 1$. 
By Claim~\ref{Claim:assume-G-after-shifting} and~\eqref{EQ:Nu1-contain-Nu2-alpha}, we have $u_1v_{\omega+1}\in G$. 
Let 
\begin{align*}
i_1 \coloneqq \max \left\{i \colon u_1v_i \in G\right\}, 
\quad \text{and} \quad
j_1\coloneqq \min\left\{j \colon v_{\omega +1}v_{j} \not\in G \right\}. 
\end{align*} 
Then $i_1 = d(u_1) \ge \omega + 1$ and $j_1 \le \omega$ by the choice of $\omega$. 
Moreover, $v_{j_1}v_j \not\in G$ for every $j \ge \omega+1$ by Claim \ref{Claim:assume-G-after-shifting}~\ref{ITEM:alpha-alpha-vi-vj-complete}. 
Let $G^{\ast}$ be the graph obtained from $G$ by removing the edge set $\{u_1 v_i \colon i \in [\omega+1,i_1]\}$ and then adding the set $\{v_{j_1} v_i \colon i \in [\omega+1,i_1]\}$. 
It is clear that $|G^{\ast}| = |G|$, $\delta_{G^{\ast}}(I)\ge \alpha n$ by \eqref{EQ:lower-bound-omega}, and
\begin{align*}
Z_{1}(G^{\ast}) - Z_{1}(G)
& = \left(d(u_1)-(i_1-\omega)\right)^2 + \left(d(v_{j_1})+(i_1-\omega)\right)^2 - d(u_1)^2 - d(v_{j_1})^2 \\
& = 2(i_1 - \omega)(d(v_{j_1}) - \omega) 
\ge 0. 
\end{align*}
Therefore, we have $G^{\ast} \in \Phi(n, \rho, \alpha, \alpha)$. 
This contradicts the minimality of $|B_0|$. 
\end{proof}

By Claim~\ref{CLAIM:alpha-B_0-empty}, we may restrict our consideration to the graph $G \in \Phi(n, \rho, \alpha, \alpha)$ with $\left|G[I \cup V_2]\right| = 0$. 
We denote by $\omega_1$ and $\omega_2$ the minimum and maximum values of $\omega(G)$ among all such graphs, respectively. 
We distinguish the following two cases. 

\textbf{Case 1. $\omega_1 < n/2$. } 

Let 
\begin{align*}
J_{G} \coloneqq \left\{u_i \in I \colon u_i v_{\omega} \in G\right\}. 
\end{align*}
Now we choose $G \in \Phi(n, \rho, \alpha, \alpha)$  with $\omega(G) = \omega_1$ and minimizing $|J_G|$. 
First, we consider the case that $J(G) = \emptyset$. 
Let $G_1 = G[V_1 \setminus\{v_{\omega_1}\}]$, $G_2 = G[I \cup V_2\cup \{v_{\omega_1}\}]$, and $B_1$ be the bipartite graph induced by edges between $V_1 \setminus \{v_{\omega_1}\}$ and $I \cup V_2 \cup \{v_{\omega_1}\}$. 
Then $G_1$ is complete. 
By the definition of $\omega(G)$, $\left|G[I \cup V_2]\right| = 0$ and $J(G) = \emptyset$, $G_2$ is empty. 
It follows that
\begin{align}\label{EQ:Z_{1}(G)_equals_Z_{1}(B_1)}
Z_{1}(G) 
&= \sum_{i=1}^{\omega_1 -1} d(v_i)^2 + \sum_{i=\omega_1}^{(1-\alpha)n} d(v_i)^2 + \sum_{i=1}^{\alpha n} d(u_i)^2 \notag \\
&= \sum_{i=1}^{\omega_1 -1}\left(\omega_1 -2 + d_{B_1}(v_i)\right)^2 + \sum_{i=\omega_1}^{(1-\alpha)n} d_{B_1}(v_i)^2 + \sum_{i=1}^{\alpha n } d_{B_1}(u_i)^2 \notag\\
&= (\omega_1-1)(\omega_1 -2)^2 + 2(\omega_1 -2) \left(|G|- \binom{\omega_1-1}{2} \right) + Z_{1}(B_1). 
\end{align}

This means that for fixed $\omega_1$, it suffices to maximize $Z_{1}(B_1)$. 
Let 
\begin{align*}
\left(r,s,\ell,k,m\right) 
\coloneqq \left(n- \omega_1 +1, \omega_1 -1, \alpha n, \alpha n, |G|- \tbinom{\omega_1-1}{2}\right). 
\end{align*}
Since $\delta_{G}(I) \ge \alpha n$ and $r \ge s$ by $\omega_1 < n/2$, it follows from Theorem~\ref{Thm:biparite-ell-k-Z_{1}(G)} that 
\begin{align}\label{EQ:alpha-Z_1-B_1-from_bi}
 Z_{1}(B_1)
        \le 
        \begin{cases}
            Z_{1}\big(B_1(r,s,m,\ell,k) \big), &\quad\text{if}\quad m \le rk ~\text{and}~ k + \ell \le r, \\[0.5em]
            Z_{1}\big(B_2(r,s,m,\ell,k) \big), &\quad\text{if}\quad m \le rk ~\text{and}~ k + \ell > r, \\[0.5em]
            Z_{1}\big(B(r,s,m) \big), &\quad\text{if}\quad m \ge r k. 
        \end{cases}
\end{align}
Then let $G^{\ast}$ denote the graph obtained from $G$ by replacing $B_1$ with $B_1(r,s,m,\ell,k)$, $B_2(r,s,m,\ell,k)$ or $B(r,s,m)$ (depending on whether $m \le rk$ and $k + \ell \le r$). 
It follows from the definition of $B_1(\cdot)$, $B_2(\cdot)$ and $B(\cdot)$ that $G^{\ast} \in \mathcal{G}\big(n,\rho \tbinom{n}{2}, \alpha n, \alpha n \big)$. 
Moreover, it follows from~\eqref{EQ:Z_{1}(G)_equals_Z_{1}(B_1)} and~\eqref{EQ:alpha-Z_1-B_1-from_bi} that $Z_{1}(G^{\ast}) \ge Z_{1}(G)$. 
Therefore, $G^{\ast} \in \Phi(n, \rho, \alpha, \alpha)$. 
Note that $\left|G^{\ast}[I \cup V_2]\right| = \left|G[I \cup V_2]\right| = 0$. 
Hence, we have $|B_1| > (\omega_1 -2 )(n-\omega_1 +1)$. 
Otherwise, $v_{\omega_1-1} v_{\omega_1} \not\in G^{\ast}$, contradicting the minimality of $\omega_1$. 
Combining this with the upper bound $|B_1| \le (\omega_1 -1)(n- \omega_1 +1)$, we have  
\begin{align*}
\rho \binom{n}{2} 
= |G| 
= \binom{\omega_1 -1}{2} + |B_1|
= \frac{1}{2} \omega_1^2 + \omega_1 (n -\omega_1) + o(n^2), 
\end{align*}
which implies that 
\begin{align}\label{EQ:alpha-calculation-w_1}
\omega_1 
= \left(1-\sqrt{1-\rho} \right)n +o(n). 
\end{align}

It follows from~\eqref{EQ:Z_{1}(G)_equals_Z_{1}(B_1)} and~\eqref{EQ:alpha-Z_1-B_1-from_bi} that 
\begin{align}\label{EQ:alpha-calculation-w_1-Z_1(G)}
Z_{1}(G) 
\le \omega_1^3 + 2\omega_1 |B_1| + Z_{1}(B_1)
\le \omega_1^3 + 3\omega_1^2(n-\omega_1) + \omega_1(n-\omega_1)^2 + o(n^3). 
\end{align}

Combining~\eqref{EQ:alpha-calculation-w_1} and~\eqref{EQ:alpha-calculation-w_1-Z_1(G)}, we have  
\begin{align*}
Z_{1}(G) \le \left(2\rho -1 +(1-\rho)^{3/2}\right)n^3 + o(n^3). 
\end{align*}

Next, we consider the case $|J(G)| \ge 1$. 
Let $G_1'= G[V_1]$, $G_2'=G[I\cup V_2]$, and  $B_2$ be the bipartite graph induced by the edges between $V_1$ and $I \cup V_2$. 
Then $G_1'$ is complete and $G_2'$ is empty. 
It is similar to~\eqref{EQ:Z_{1}(G)_equals_Z_{1}(B_1)} that 
\begin{align}\label{EQ:Z_{1}(G)-B2-omega1}
Z_{1}(G) = \omega_1 (\omega_1 -1)^2 + 2(\omega_1 -1) \left(|G| - \binom{\omega_1}{2}\right) + Z_{1}(B_2). 
\end{align}
Let 
\begin{align*}
\left(\hat{r},\hat{s},\hat{\ell},\hat{k},\hat{m}\right) 
\coloneqq \left(n- \omega_1, \omega_1, \alpha n, \alpha n, |G|- \tbinom{\omega_1}{2}\right). 
\end{align*}
Similarly, it follows from Theorem~\ref{Thm:biparite-ell-k-Z_{1}(G)} that 
\begin{align}\label{EQ:alpha-Z_1-B_2-from_bi}
 Z_{1}(B_2)
        \le 
        \begin{cases}
            Z_{1}\big(B_1(\hat{r},\hat{s},\hat{m},\hat{\ell},\hat{k}) \big), &\quad\text{if}\quad \hat{m} \le \hat{r}\hat{k} ~\text{and}~ \hat{k} + \hat{\ell} \le \hat{r}, \\[0.5em]
            Z_{1}\big(B_2(\hat{r},\hat{s},\hat{m},\hat{\ell},\hat{k}) \big), &\quad\text{if}\quad \hat{m} \le \hat{r}\hat{k} ~\text{and}~ \hat{k} + \hat{\ell} > \hat{r}, \\[0.5em]
            Z_{1}\big(B(\hat{r},\hat{s},\hat{m}) \big), &\quad\text{if}\quad \hat{m} \ge \hat{r} \hat{k}. 
        \end{cases}
\end{align}

Then let $\hat{G}$ denote the graph obtained from $G$ by replacing $B_2$ with $B_1(\hat{r},\hat{s},\hat{m},\hat{\ell},\hat{k}$, $B_2(\hat{r},\hat{s},\hat{m},\hat{\ell},\hat{k})$ or $B(\hat{r},\hat{s},\hat{m})$ (depending on whether $\hat{m} \le \hat{r}\hat{k}$ and $\hat{k} + \hat{\ell} \le \hat{r}$). 
It follows from the definition of $B_1(\cdot)$, $B_2(\cdot)$ and $B(\cdot)$ that $\hat{G} \in \mathcal{G}\big(n,\rho \tbinom{n}{2}, \alpha n, \alpha n \big)$. 
Moreover, it follows from~\eqref{EQ:Z_{1}(G)-B2-omega1} and~\eqref{EQ:alpha-Z_1-B_2-from_bi} that $Z_{1}(\hat{G}) \ge Z_{1}(G)$. 
Therefore, $\hat{G} \in \Phi(n, \rho, \alpha, \alpha)$. 
Note that $\left|\hat{G}[I \cup V_2]\right| = \left|G[I \cup V_2]\right| = 0$, and $\omega(\hat{G}) = \omega(G) = \omega_1$. 
Then the subgraph induced by $V_1 \setminus \{v_{\omega}\}$ and $V_2 \cup I$ is a complete bipartite graph in $\hat{G}$. 
Otherwise, by~\eqref{EQ:lower-bound-omega}, we have $|J_{\hat{G}}| < |J_{G}|$, a contradiction to the minimality of $|J_{G}|$. 
It follows that $|B_2| \ge (\omega_1 -1)(n - \omega_1)$. 
Similar to the calculations of~\eqref{EQ:alpha-calculation-w_1} and~\eqref{EQ:alpha-calculation-w_1-Z_1(G)} above, we have 
\begin{align*}
Z_{1}(G) \le \left(2\rho -1 +(1-\rho)^{3/2}\right)n^3 + o(n^3). 
\end{align*}
In this case, by combining the calculations in Lemma~\ref{LEM:alpha-inequality-star-two-upper-bounds}, we obtain 
\begin{align*}
Z_{1}(G) 
\le \left(2\rho -1 +(1-\rho)^{3/2}\right)n^3 + o(n^3)
\le \left(\alpha^3+(\rho-\alpha^2) \sqrt{\rho+\alpha^2}\right) n^3 + o(n^3). 
\end{align*}

\textbf{Case 2. $\omega_1 \ge n/2$. } 

We choose $G \in \Phi(n, \rho, \alpha, \alpha)$  with $\omega(G) = \omega_2$. 
As in Case 1, let $G_1' = G[V_1]$, $G_2' = G[I\cup V_2]$, and $B_2$ be the bipartite graph induced by the edges between $V_1$ and $I\cup V_2$. 
Then $G_1'$ is complete, $G_2'$ is empty, and 
\begin{align}\label{EQ:Z_{1}(G)-B2-omega2}
Z_{1}(G) = \omega_2 (\omega_2 -1)^2 + 2(\omega_2 -1) \left(|G| - \binom{\omega_2}{2}\right) + Z_{1}(B_2). 
\end{align} 
Let 
\begin{align*}
\left(r,s,\ell,k,m\right) 
\coloneqq \left(\omega_2, n -\omega_2, \alpha n, \alpha n, |G|- \tbinom{\omega_2}{2}\right). 
\end{align*}
Since $\omega_2 \ge \omega_1 \ge n/2$, we have $r \ge s$. 
It follows from $\omega_2 \ge n/2$ and $\alpha \le 23/100$ that 
\begin{align*}
k + \ell 
= 2 \alpha n 
\le \frac{23}{50}n 
< \omega_2
= r. 
\end{align*}

By Theorem~\ref{Thm:biparite-k-ell-Z_{1}(G)}, we obtain that 
\begin{align}\label{EQ:alpha-Z_1-B_2-case2-from_bi}
 Z_{1}(B_2)
        \le 
        \begin{cases}
            Z_{1}\big(B_1(r,s,m,\ell,k) \big), &\quad\text{if}\quad m \le rk,  \\[0.5em] 
            Z_{1}\big(B(r,s,m) \big), &\quad\text{if}\quad m \ge r k. 
        \end{cases}
\end{align}

Let $G^{\ast}$ denote the graph obtained from $G$ by replacing $B_2$ with $B_1(r,s,m,\ell,k)$ or $B(r,s,m)$ (depending on whether $m \le rk$). 
Similar to the argument above, we have $G^{\ast} \in \Phi(n, \rho, \alpha, \alpha)$. 
Therefore, we have $|B_2| < (\alpha n+1) \omega_2$. 
Otherwise, $v_{\omega_2} v_{\omega_2 +1} \in G^{\ast}$, contradicting the maximality of $\omega_2$. 

\textbf{Subcase 2.1.} $m \ge rk$, that is, $|B_2| \ge \alpha n \cdot \omega_2$. 

In this subcase, we have $|B_2| = \alpha n \cdot \omega_2 + o(n^2)$. 
It follows that 
\begin{align*}
\rho \binom{n}{2} 
= |G| 
= \binom{\omega_2}{2} + |B_2|
= \frac{1}{2} \omega_2^2 + \alpha n \cdot \omega_2 + o(n^2), 
\end{align*}
which implies that 
\begin{align}\label{EQ:alpha-calculation-w_2-case2}
\omega_2
= \left(\sqrt{\rho + \alpha^2} - \alpha \right)n + o(n). 
\end{align}
Since $\alpha n \cdot \omega_2 \le |B_2| < (\alpha n+1) \omega_2$, by~\eqref{EQ:alpha-Z_1-B_2-case2-from_bi}, 
\begin{align*}
Z_{1}(B_2) 
\le Z_{1}\big(B(r,s,m) \big)
= Z_{1}\left(K_{\omega_2, \alpha n}\right) + o(n^3). 
\end{align*}
Substituting it into \eqref{EQ:Z_{1}(G)-B2-omega2} yields that 
\begin{align}\label{EQ:alpha-case2-complete_Z_{1}(G)_bound}
Z_{1}(G) 
\le \omega_2^3 + 2 \omega_2 |B_2| + Z_{1}(K_{\omega_2, \alpha n}) + o(n^3) 
= \omega_2^3 + 3 \alpha n \cdot \omega_2^2 + \alpha^2 n^2 \cdot \omega_2 + o(n^3). 
\end{align}
Combining~\eqref{EQ:alpha-calculation-w_2-case2} and~\eqref{EQ:alpha-case2-complete_Z_{1}(G)_bound}, we have 
\begin{align*}
Z_{1}(G) \le \left(\alpha^3+(\rho-\alpha^2)\sqrt{\rho+\alpha^2}\right) n^3 
+ o(n^3). 
\end{align*}

\textbf{Subcase 2.2. } $m \le rk$, that is, $|B_2| < \alpha n \cdot \omega_2$. 

By~\eqref{EQ:alpha-Z_1-B_2-case2-from_bi}, we have $Z_{1}(B_2) \le  Z_{1}\big(B_1(r,s,m,\ell,k) \big)$. 
According to the definition of $B_1(r,s,m,\ell,k)$, let $m = |B_2| = \alpha^2 n^2 + p(\omega_2 - \alpha n) +q$, where $0 \le q < \omega_2 - \alpha n$. 
Then, we have 
\begin{align*}
\rho \binom{n}{2} 
= |G| 
= \binom{\omega_2}{2} + \alpha^2 n^2 + p(\omega_2 - \alpha n) + o(n^2) 
= \frac{1}{2} \omega_2^2 + \alpha^2 n^2 + p(\omega_2 - \alpha n) + o(n^2), 
\end{align*}
and 
\begin{align*}
Z_{1}(G) 
&\le \omega_2 (\omega_2 -1)^2 + 2(\omega_2 -1) \left(|G| - \binom{\omega_2}{2}\right) + Z_{1}\big(B_1(r,s,m,\ell,k) \big) \\ 
&= \omega_2 ^3 + 2 \omega_2 \left(\alpha^2 n^2 + p(\omega_2 - \alpha n)\right)+p \omega_2^2 + (\alpha n -p) \alpha^2n^2 + \alpha^3 n^3 + (\omega_2 - \alpha n) p^2 + o(n^3) \\
& = p \omega_2^2 + (\alpha n - p) \alpha^2 n^2 + \alpha n (\alpha n+ \omega_2)^2 + (\omega_2 - \alpha n) (p + \omega_2)^2 + o(n^3). 
\end{align*}
Regarding this as a function of $p$ and applying Lemma~\ref{Lem:Appe-calculation_1} in Appendix (with $d = \rho/2$, $a = \alpha$, $y = \omega_2/n$ and $x = p/n$), we deduce that $Z_{1}(G)$ attains its maximum when $p = \alpha n$. 
Therefore, 
\begin{align*}
Z_{1}(G) 
\le \left(\alpha^3+(\rho-\alpha^2)\sqrt{\rho+\alpha^2}\right) n^3 + o(n^3).
\end{align*}

This completes the proof of Theorem~\ref{Thm:Max-S2-alpha-alpha}. 
\end{proof}

\section{Proof of Theorem~\ref{Thm:Max-S2-alpha-beta}}\label{Sec:note-general-graph-alpha-beta}

In this section, we present the proof of Theorem~\ref{Thm:Max-S2-alpha-beta}. 
The proof strategy follows a similar approach to Theorem~\ref{Thm:Max-S2-alpha-alpha}, although requiring more meticulous analysis and detailed calculations. 

\begin{proof}[Proof of Theorem~\ref{Thm:Max-S2-alpha-beta}]
Let $n$ be sufficiently large, $\rho \in \left[\frac{17}{25}, \frac{7}{10}\right]$ and $\alpha \in \left[\frac{1}{3}, \frac{2}{5}\right]$. 
Let $G$ be an $n$-vertex graph with edge density $\rho$, containing an independent set $I \subseteq V(G)$ with $|I| = \alpha n$ and $\delta_G(I) \ge \tfrac{1}{5}n$. 
By the lower bound given in Fact~\ref{FACT:lower-bound} and~\eqref{equ:Zagreb-index-S2}, it suffices to show that 
\begin{align*}
Z_{1}(G) 
\le \max\left\{2\rho -1 +(1-\rho)^{3/2}, \alpha^3+(\rho-\alpha^2) \sqrt{\rho+\alpha^2}\right\} n^3 + o(n^3). 
\end{align*}

Let $G \in \Phi(n, \rho, \alpha, 1/5)$, that is, $G$ is an extremal graph. 
Let $I = \{u_1,\ldots,u_{\alpha n}\}$ be the independent set of $G$, and let $V \coloneqq V(G) \setminus I = \{v_1,\ldots, v_{(1-\alpha)n}\}$. 
By relabeling the vertices, we may assume that 
\begin{align}\label{EQ:alpha-beta-degree-sequence}
d(u_1) \ge \cdots \ge d(u_{\alpha n}), 
\quad \text{and} \quad
d(v_1) \ge \cdots \ge d(v_{(1-\alpha)n}). 
\end{align}
It follows from $\delta_{G}(I) \ge \tfrac{1}{5}n$ that
\begin{align}\label{EQ:alpha-beta-degree>0.2n}
d(u_1) \ge \cdots \ge d(u_{\alpha n}) \ge \frac{1}{5}n.  
\end{align}

Similar tho proof of Claim~\ref{Claim:complete-bipartite-graph-bi} and Claim~\ref{Claim:assume-G-after-shifting}, using Fact~\ref{FACT:Z_{1}(G)-del-uv-add-xy} and assumption~\eqref{EQ:alpha-beta-degree-sequence}, we have the following claim. 

\begin{claim}\label{Claim:assume-G-after-shifting-n/5}
The following statements hold.  
\begin{enumerate}[label=(\roman*)]
\item \label{ITEM:alpha-beta-ui-vj-complete} If $u_iv_j \in G$, then  $u_iv_{j'} \in G$ for every $j' \in [j-1]$. 
\item \label{ITEM:alpha-beta-vi-vj-complete} If $v_iv_j\in G$, then $v_{i'}v_{j'} \in G$ for all $i' \le i$, $j' \le j$, and $i' \neq j'$.
\end{enumerate}
\end{claim}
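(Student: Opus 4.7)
The plan is to mirror the shifting argument used in Claim~\ref{Claim:complete-bipartite-graph-bi} and Claim~\ref{Claim:assume-G-after-shifting}. For each violation of~\ref{ITEM:alpha-beta-ui-vj-complete} or~\ref{ITEM:alpha-beta-vi-vj-complete}, I will construct a single-swap modification $\hat{G}$ of $G$, show $Z_{1}(\hat{G}) > Z_{1}(G)$ via Fact~\ref{FACT:Z_{1}(G)-del-uv-add-xy}, and confirm that $\hat{G}$ still lies in $\mathcal{G}\big(n, \rho \tbinom{n}{2}, \alpha n, n/5\big)$. Since $G \in \Phi(n,\rho,\alpha,1/5)$ is extremal, this will give the desired contradiction. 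Both the extremal family and the ordering assumption~\eqref{EQ:alpha-beta-degree-sequence} match those used in Claim~\ref{Claim:assume-G-after-shifting} (only the minimum-degree constraint on $I$ has changed, from $\alpha n$ to $n/5$), so the proof should go through essentially verbatim.

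For part~\ref{ITEM:alpha-beta-ui-vj-complete}, assume $u_iv_j \in G$ but $u_iv_{j'} \notin G$ for some $j' \in [j-1]$, and set $\hat{G} \coloneqq (G \setminus \{u_iv_j\}) \cup \{u_iv_{j'}\}$. Since the two pairs share $u_i$, Fact~\ref{FACT:Z_{1}(G)-del-uv-add-xy}~\ref{FACT:Z_{1}(G)-del-uv-add-xy-b} together with $d_G(v_{j'}) \ge d_G(v_j)$ from~\eqref{EQ:alpha-beta-degree-sequence} yields $Z_{1}(\hat{G}) - Z_{1}(G) \ge 2$. The swap introduces no edge inside $I$ and preserves $d_{\hat{G}}(u) = d_G(u)$ for every $u \in I$, so $I$ remains independent and $\delta_{\hat{G}}(I) = \delta_{G}(I) \ge n/5$. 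Thus $\hat{G} \in \mathcal{G}\big(n, \rho \tbinom{n}{2}, \alpha n, n/5\big)$, contradicting the extremality of $G$.

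For part~\ref{ITEM:alpha-beta-vi-vj-complete}, assume $v_iv_j \in G$ but $v_{i'}v_{j'} \notin G$ for some $i' \le i$, $j' \le j$ with $i' \ne j'$ and $\{i',j'\} \ne \{i,j\}$, and set $\hat{G} \coloneqq (G \setminus \{v_iv_j\}) \cup \{v_{i'}v_{j'}\}$. Depending on whether $\{v_i,v_j\} \cap \{v_{i'},v_{j'}\}$ is empty or not, I invoke Fact~\ref{FACT:Z_{1}(G)-del-uv-add-xy}~\ref{FACT:Z_{1}(G)-del-uv-add-xy-a} or~\ref{FACT:Z_{1}(G)-del-uv-add-xy-b}; in each case the inequalities $d_G(v_{i'}) \ge d_G(v_i)$ and $d_G(v_{j'}) \ge d_G(v_j)$ from~\eqref{EQ:alpha-beta-degree-sequence}, combined with the additive $+4$ or $+2$ term, force $Z_{1}(\hat{G}) - Z_{1}(G) \ge 2$. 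Since the swap takes place entirely inside $V$, the set $I$ remains independent and every vertex in $I$ keeps the same degree, so $\hat{G} \in \mathcal{G}\big(n, \rho \tbinom{n}{2}, \alpha n, n/5\big)$, again contradicting extremality.

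The main obstacle is purely notational rather than substantive: in part~\ref{ITEM:alpha-beta-vi-vj-complete} one must track the subcases of Fact~\ref{FACT:Z_{1}(G)-del-uv-add-xy} carefully, because the swapped edges may share $v_i$, share $v_j$, or be vertex-disjoint from $\{v_i,v_j\}$. However, the numerical slack in the $+4$ or $+2$ term dominates the non-negative degree differences in every subcase, and no ingredients beyond Fact~\ref{FACT:Z_{1}(G)-del-uv-add-xy} and~\eqref{EQ:alpha-beta-degree-sequence} are needed; the specific constant $n/5$ in the degree constraint plays no role beyond being trivially preserved under the swap.
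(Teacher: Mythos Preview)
Your proposal is correct and follows precisely the shifting argument the paper has in mind: the paper omits the proof entirely, pointing to Fact~\ref{FACT:Z_{1}(G)-del-uv-add-xy} and~\eqref{EQ:alpha-beta-degree-sequence} and saying the argument is the same as for Claims~\ref{Claim:complete-bipartite-graph-bi} and~\ref{Claim:assume-G-after-shifting}. Your single-swap analysis, including the case split in~\ref{ITEM:alpha-beta-vi-vj-complete} according to whether the old and new edge share a vertex, fills in exactly those omitted details.
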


It follows from Claim~\ref{Claim:assume-G-after-shifting-n/5} and~\eqref{EQ:alpha-beta-degree>0.2n} that 
\begin{align}\label{EQ:beta-has-complete-bipartite}
K[\{v_1, \ldots,v_{n/5}\}, \{u_1, \ldots,u_{\alpha n}\}] \subseteq G. 
\end{align}
Denote the largest index $i \in \left[(1-\alpha)n\right]$ such that $v_{i-1}v_i\in  G$ by $\omega \coloneqq \omega(G)$. 
Then the subgraph induced by $V_1 \coloneqq \{v_1,\ldots, v_{\omega}\}$ is complete, and the subgraph induced by $V_2 \coloneqq  \{v_{\omega+1}, \ldots, v_{(1-\alpha)n}\}$ is empty by the maximality of $\omega$ and Claim~\ref{Claim:assume-G-after-shifting-n/5}. 
It follows from 
\begin{align*}
|G| = \frac{1}{2} \rho n^2 
\le \binom{n}{2} - \binom{\alpha n}{2} -\binom{(1-\alpha)n -\omega}{2}
< \frac{1}{2}n^2 - \frac{1}{2}\alpha^2 n^2 -\frac{1}{2}\left((1-\alpha)n -\omega\right)^2
\end{align*}
that 
\begin{align}\label{EQ:bound-omega-n/5}
\omega 
> (1-\alpha)n - \left(1-\rho -\alpha^2 \right)^{1/2}n 
\ge (1-\alpha)n - \left(1-\frac{17}{25} -\alpha^2 \right)^{1/2}n 
\ge \frac{1}{5}n, 
\end{align}
where the last inequality follows from the fact that $\alpha \le \tfrac{2}{5}$. 

Using~\eqref{EQ:bound-omega-n/5} and the same argument as in the proof of Claim~\ref{CLAIM:alpha-B_0-empty} in Theorem~\ref{Thm:Max-S2-alpha-alpha}, we have the following Claim. 

\begin{claim}\label{CLAIM:beta-B_0-empty}
There exits a graph $G \in \Phi(n, \rho, \alpha, 1/5)$ such that $G[I \cup V_2]$ is empty. 
\end{claim}

\begin{proof}[Proof of Claim~\ref{CLAIM:beta-B_0-empty}]
Let $B_0 \coloneqq  G[I\cup V_2]$. 
Since $I$ is an independent set and $G[V_2]$ is empty by the definition of $\omega(G)$, $B_0$ is bipartite graph. 
We now show that there exits a graph $G \in \Phi(n, \rho, \alpha, 1/5)$ such that $|B_0| = 0$. 
Assume, for contradiction, that no such graph $G$ exists. 
We choose $G\in \Phi(n, \rho, \alpha, 1/5)$ such that $|B_0|$ is as small as possible. Then $|B_0|\ge 1$. 
By Claim~\ref{Claim:assume-G-after-shifting-n/5}, we have $u_1v_{\omega+1}\in G$. 
Let 
\begin{align*}
i_1 \coloneqq \max \left\{i \colon u_1v_i \in G\right\}, 
\quad \text{and} \quad
j_1\coloneqq \min\left\{j \colon v_{\omega +1}v_{j} \not\in G \right\}. 
\end{align*} 
Then $i_1 = d(u_1) \ge \omega + 1$ and $j_1 \le \omega$ by the choice of $\omega$. 
Moreover, $v_{j_1}v_j \not\in G$ for every $j \ge \omega+1$ by Claim~\ref{Claim:assume-G-after-shifting-n/5}. 
Let $G^{\ast}$ be the graph obtained from $G$ by removing the edge set $\{u_1 v_i \colon i \in [\omega+1,i_1]\}$ and then adding the set $\{v_{j_1} v_i \colon i \in [\omega+1,i_1]\}$. 
It is clear that $|G^{\ast}| = |G|$ and by~\eqref{EQ:bound-omega-n/5}, the minimum degree condition $\delta_{G^{\ast}}(I) \ge \tfrac{1}{5}n$ is preserved. 
Moreover, we have 
\begin{align*}
Z_{1}(G^{\ast}) - Z_{1}(G)
& = \left(d(u_1)-(i_1-\omega)\right)^2 + \left(d(v_{j_1})+(i_1-\omega)\right)^2 - d(u_1)^2 - d(v_{j_1})^2 \\
& = 2(i_1 - \omega)(d(v_{j_1}) - \omega) 
\ge 0. 
\end{align*}
Therefore, we have $G^{\ast} \in \Phi(n, \rho, \alpha, 1/5)$. 
This contradicts the minimality of $|B_0|$. 
\end{proof}

By Claim~\ref{CLAIM:beta-B_0-empty}, we may restrict our consideration to the graph $G \in \Phi(n, \rho, \alpha, 1/5)$ with $\left|G[I \cup V_2]\right| = 0$. 
We denote by $\omega_1$ and $\omega_2$ the minimum and maximum values of $\omega(G)$ among all graphs $G \in \Phi(n, \rho, \alpha, 1/5)$ with $\left|G[I\cup V_2]\right| = 0$, respectively. 
We distinguish the following two cases. 

\textbf{Case 1. $\omega_1 < n/2$. }

The argument is identical to that in \textbf{Case 1} of the proof of Theorem~\ref{Thm:Max-S2-alpha-alpha}, yielding 
\begin{align*}
Z_{1}(G) \le \left(2\rho -1 +(1-\rho)^{3/2}\right)n^3 + o(n^3). 
\end{align*}

\textbf{Case 2. $\omega_1 \ge n/2$. }

Choose a graph $G\in \Phi(n, \rho, \alpha, 1/5)$ with $\omega(G) = \omega_2$. 
It follows from $\omega_1 \ge n/2$ that $\omega_2 \ge n/2$. 
Let $G_1' = G[V_1]$, $G_2' = G[I\cup V_2]$, and $B_2$ be the bipartite graph induced by the edges between $V_1$ and $I \cup V_2$. 
Then $G_1'$ is complete, $G_2'$ is empty by Claim~\ref{CLAIM:beta-B_0-empty}, and 
\begin{align}\label{EQ:Z_{1}(G)-B2-omega2-n/5}
Z_{1}(G) 
= \omega_2 (\omega_2 -1)^2 + 2(\omega_2 -1) \left(|G| - \binom{\omega_2}{2}\right) + Z_{1}(B_2). 
\end{align}

The primary difficulty arises from the inapplicability of Theorem~\ref{Thm:biparite-k-ell-Z_{1}(G)} to $B_2$. 
To address this limitation, we introduce additional parameters and subsequently optimize $Z_{1}(B_2)$ over the feasible parameter space. 
To simplify the presentation, we relabel the vertices in $V_2 \cup I$ by $Z \coloneqq \left\{z_1, \ldots, z_{n-\omega_2}\right\}$ with $z_i = u_i$ for $i \in [\alpha n]$, and $z_{\alpha n+j} = v_{\omega_2+j}$ for $j \in [n-\alpha n-\omega_2]$. 
Using the same argument as Claim~\ref{Claim:complete-bipartite-graph-bi} in the proof of Theorem~\ref{Thm:biparite-ell-k-Z_{1}(G)}, we may further assume that 
\begin{align*}
\text{if } v_iz_j\in B_2, \text{ then } v_{i'}z_{j'} \in B_2 \text{ for all } i' \le i \text{ and } j' \le j. 
\end{align*}

It follows from~\eqref{EQ:beta-has-complete-bipartite} that 
\begin{align*}
K[\{v_1, \ldots,v_{n/5}\}, \{z_1, \ldots,z_{\alpha n}\}] \subseteq B_2. 
\end{align*}

Among all such $B_2$ that achieve the maximum value of $Z_{1}(B_2)$, that is, $B_2$ is a member of $\Phi\left(\omega_2,n-\omega_2, n/5, \alpha n, |G|-\tbinom{\omega_2}{2}\right)$, choose $n/5 \le i_0 \le \omega_2$ and $\alpha n \le j_0 \le n-\omega_2$ such that $\{v_1, \ldots, v_{i_0}\}$ and $\{z_1, \ldots, z_{j_0}\}$ form a complete bipartite graph in $B_2$ and $i_0 +j_0$ is maximized. 
We partition $B_2$ into the following four bipartite graphs: 
\begin{itemize}
\item $H_1 \coloneqq B_2\left[\{v_1,\ldots, v_{i_0}\}\cup \{z_1, \ldots, z_{j_0}\}\right]$, 

\item $H_2 \coloneqq B_2\left[\{v_1,\ldots, v_{i_0}\}\cup \{z_{j_0+1}, \ldots, z_{n-\omega_2}\}\right]$, 

\item $H_3 \coloneqq B_2\left[\{v_{i_0+1},\ldots, v_{\omega_2}\}\cup \{z_1, \ldots, z_{j_0}\}\right]$,

\item $H_4\coloneqq B_2\left[\{v_{i_0+1},\ldots, v_{\omega_2}\}\cup \{z_{j_0+1}, \ldots, z_{n-\omega_2}\}\right]$.  
\end{itemize}

Then $H_1$ is a complete bipartite graph and $H_4$ is empty. 
Moreover, we have 
\begin{align}\label{EQ:Z_{1}(B_2_example)-beta} 
Z_{1}(B_2) 
&= \sum_{i=1}^{\omega_2} d_{B_2}(v_i)^2 + \sum_{j=1}^{n-\omega_2} d_{B_2}(z_j)^2\notag \\
&= \sum_{i=1}^{i_0} d_{B_2}(v_i)^2 +\sum_{i=i_0+1}^{\omega_2} d_{B_2}(v_i)^2 + \sum_{j=1}^{j_0} d_{B_2}(z_j)^2+ \sum_{j=j_0+1}^{n-\omega_2} d_{B_2}(z_j)^2\notag \\
&= \sum_{i=1}^{i_0} \left(j_0+d_{H_2}(v_i)\right)^2+\sum_{i=i_0+1}^{\omega_2} d_{H_3}(v_i)^2+ \sum_{j=1}^{j_0} \left(i_0+d_{H_3}(z_j)\right)^2+ \sum_{j=j_0+1}^{n-\omega_2} d_{H_2}(z_j)^2\notag \\
& = i_0 \cdot j_0^2+ 2j_0 |H_2| + j_0 \cdot i_0^2+ 2i_0 |H_3|+Z_{1}(H_2) + Z_{1}(H_3). 
\end{align}

Considering the number of edges of $G$, 
we have 
\begin{align}\label{EQ:number-edges-G-H2-H3-n/5}
\frac{1}{2} \rho n^2
= |G| 
= \binom{\omega_2}{2} + |B_2|
= \binom{\omega_2}{2} + i_0 j_0 + |H_2| + |H_3|. 
\end{align}
Therefore, for fixed $i_0, j_0, |H_2|$ and $|H_3|$, combining~\eqref{EQ:Z_{1}(G)-B2-omega2-n/5} and~\eqref{EQ:Z_{1}(B_2_example)-beta}, it suffices to maximize $Z_{1}(H_2)$ and $Z_{1}(H_3)$. 

Note that $H_2$ is an $i_0$ by $n- \omega_2 -j_0$ bipartite graph with 
\begin{align*}
n- \omega_2 -j_0 
\le \frac{1}{2}n - j_0 
\le \frac{1}{2}n - \alpha n 
\le  \frac{1}{2}n - \frac{1}{3}n  
< \frac{1}{5}n
\le i_0. 
\end{align*}
By Theorem~\ref{Thm:AK78-bipartite}, we obtain that 
\begin{align}\label{EQ:beta-Z_1-H_2-graph-bound}
Z_{1}(H_2) 
\le Z_{1}\big(B(i_0,n- \omega_2 -j_0,|H_2|) \big). 
\end{align}
Let $\hat{B}_2$ denote the graph obtained from $B_2$ by replacing $H_2$ with $B(i_0,n- \omega_2 -j_0,|H_2|)$. 
Similar to the argument above, we have $\hat{B}_2 \in \Phi\left(\omega_2,n-\omega_2, n/5, \alpha n, |G|-\tbinom{\omega_2}{2}\right)$. 
Then we have 
\begin{align}\label{EQ:bound_H2}
|H_2| \le i_0 -1,    
\end{align}
since otherwise $v_{i_0}$ is adjacent to $z_{j_0+1}$ in $\hat{B}_2$, contradicting the maximality of $i_0 +j_0$. 

Recall that $H_3$ is a $j_0$ by $\omega_2 - i_0$ bipartite graph. 
If $j_0 \ge \omega_2 - i_0$, by the same reasoning as before, we have  
\begin{align}\label{EQ:bound_H3_1}
|H_3| \le j_0-1. 
\end{align}

Suppose that $j_0 < \omega_2 - i_0$. 
We claim that if $j_0 \ge \alpha n+1$, then 
\begin{align}\label{EQ:bound_H3_2}
|H_3| \le \omega_2 -i_0 -1. 
\end{align}

By contradiction, we assume that $|H_3| \ge \omega_2 -i_0$. 
Since $j_0 < \omega_2 - i_0$, by Theorem~\ref{Thm:AK78-bipartite}, we obtain that 
\begin{align*}
Z_{1}(H_3) 
\le Z_{1}\big(B(\omega_2 -i_0, j_0, |H_3|) \big). 
\end{align*}
Let $\hat{B}_2$ denote the graph obtained from $B_2$ by replacing $H_3$ with $B(\omega_2 -i_0, j_0, |H_3|)$. 
Similar to the argument above, we have $ \hat{B}_2 \in \Phi\left(\omega_2,n-\omega_2, n/5, \alpha n, |G|-\tbinom{\omega_2}{2}\right)$. 
Since $|H_3| \ge \omega_2 -i_0$, we have $z_1v_i \in \hat{B}_2$ for all $i_0 + 1 \le i \le \omega_2$. 
It follows from $j_0 \ge \alpha n+1$ that $z_1v_i, z_{\alpha n+1}v_i \in \hat{B}_2$ for all $i \le i_0$. 
Let $B^{\ast}_2$ denote the graph obtained from $\hat{B}_2$ by swapping the neighborhoods of $z_1$ and $z_{\alpha n+1}$. 
This results in $z_{\alpha n +1} v_{i} \in B^{\ast}_2$ for all $i \le \omega_2$. 
Let $G^{\ast}$ denote the graph obtained from $G$ by replacing $B_2$ with $B^{\ast}_2$. 
We have $v_{\omega_2} v_{\omega_2+1}  \in G^{\ast}$, contradicting the definition of $\omega_2$. 

Note that by the definition of $j_0$, we have $\alpha n \le j_0 \le n - \omega_2$. 
We first analyze the two boundary cases where $j_0 = \alpha n$ and $j_0 = n - \omega_2$ separately, and then demonstrate that $Z_{1}(B_2)$ achieves its maximum value in one of these two boundary cases. 

\textbf{Subcase 2.1. $j_0 = \alpha n$. } 

We divide this case into the following two subcases. 

\textbf{Subcase 2.1.1. $j_0 = \alpha n \ge \omega_2 - i_0$. }

It follows from~\eqref{EQ:bound_H2} and~\eqref{EQ:bound_H3_1} that $|H_2| \le i_0 -1$ and $|H_3| \le j_0 - 1 = \alpha n-1$. 
Set $x \coloneqq i_0 -n/5$. 
By~\eqref{EQ:number-edges-G-H2-H3-n/5}, 
we have 
\begin{align*}
\frac{1}{2} \rho n^2
= |G| 
= \frac{1}{2} \omega_2^2 + \frac{1}{5} \alpha n^2 + x \cdot \alpha n+ o(n^2). 
\end{align*}
Combining~\eqref{EQ:Z_{1}(G)-B2-omega2-n/5} and~\eqref{EQ:Z_{1}(B_2_example)-beta}, we have 
\begin{align*}
Z_{1}(G) 
& \le \omega_2^3 + 2\omega_2 \left(|G| - \binom{\omega_2}{2}\right) + i_0 \cdot j_0^2+ 2j_0 |H_2| + j_0 \cdot i_0^2+ 2i_0 |H_3|+Z_{1}(H_2) + Z_{1}(H_3)\\
&= \omega_2^3 + 2\omega_2 \left(\frac{1}{5} \alpha n^2 + x \cdot \alpha n\right) + i_0 \cdot \alpha^2 n^2 + i_0^2 \cdot \alpha n +o(n^3)\\
&= i_0(\omega_2 +\alpha n)^2 +(\omega_2 -i_0) \omega_2^2 + i_0^2 \cdot \alpha n +o(n^3)\\
&= \left(x+\frac{1}{5}n\right)(\omega_2 +\alpha n)^2 +\left(\omega_2 -x-\frac{1}{5}n\right) \omega_2^2 + \alpha n \cdot \left(x+\frac{1}{5}n\right)^2 +o(n^3).
\end{align*}

Note that $0 \le x \le \omega_2 -n/5$. 
Treating this as a function of $x$ and applying Lemma~\ref{Lem:calculation_2} in Appendix (with $d = \rho/2$, $a =\alpha$ and $y =\omega_2/n$), we find that $Z_{1}(G)$ attains its maximum at $x = \omega_2 - n/5$, and 
\begin{align*}
Z_{1}(G) 
\le \left(\alpha^3+(\rho-\alpha^2) \sqrt{\rho+\alpha^2}\right)n^3 + o(n^3).
\end{align*}

\textbf{Subcase 2.1.2. } $j_0 = \alpha n < \omega_2 -i_0$. 

Now consider $H_2$. 
Let $\hat{B}_2$ denote the graph obtained from $B_2$ by replacing $H_2$ with $B(i_0,n- \omega_2 -j_0,|H_2|)$. 
By~\eqref{EQ:Z_{1}(B_2_example)-beta} and~\eqref{EQ:beta-Z_1-H_2-graph-bound}, we have $Z_{1}(B_2) \le Z_{1}(\hat{B}_2)$. 
By Theorem~\ref{Thm:AK78-bipartite} and~\eqref{EQ:bound_H2}, we have $d_{\hat{B}_2}(z_j) = 0$ for all $j \ge \alpha n+2$, and $N_{\hat{B}_2}(z_{\alpha n+1}) = \left\{v_1, \ldots, v_{|H_2|}\right\}$ with $|H_2| \le i_0-1$. 
To handle this case, we adjust the partition of $\hat{B}_2$ according to the number of edges in $H_2$. 

Suppose that $|H_2| \le n/5$. 
Let  
\begin{itemize}
\item $\hat{H}_1 \coloneqq  \hat{B}_2\left[\left\{v_1,\ldots,v_{n/5}\right\} \cup \left\{z_1,\ldots,z_{\alpha n}\right\}\right]$, 

\item $\hat{H}_2\coloneqq \hat{B}_2\left[\left\{v_1,\ldots,v_{n/5}\right\}\cup\left\{z_{\alpha n+1}, \ldots, z_{n-\omega_2}\right\}\right]$, 

\item $\hat{H}_3\coloneqq \hat{B}_2\left[\left\{v_{n/5 +1}, \ldots, v_{\omega_2}\right\}\cup \left\{z_1, \ldots, z_{\alpha n}\right\}\right]$,  

\item $\hat{H}_4\coloneqq \hat{B}_2\left[\left\{v_{n/5 +1}, \ldots, v_{\omega_2}\right\}\cup \left\{z_{\alpha n+1}, \ldots, z_{n-\omega_2}\right\}\right]$.  
\end{itemize}
Then $\hat{H}_1$ is a complete bipartite graph, $\hat{H}_4$ is empty, and $|\hat{H}_2| = |H_2| \le n/5$. 
Similar to~\eqref{EQ:Z_{1}(B_2_example)-beta}, we have 
\begin{align}\label{EQ:Q-index-B2-H'-n/5}
Z_{1}(\hat{B}_2) = \frac{1}{5}n \cdot \alpha^2 n^2 + 2\alpha n |\hat{H}_2|+ \alpha n \cdot \left(\frac{1}{5}n\right)^2+ \frac{2}{5}n |\hat{H}_3|+Z_{1}(\hat{H}_2)+Z_{1}(\hat{H}_3). 
\end{align}

Consider the $\alpha n$ by $\omega_2 - n/5$ bipartite graph $\hat{H}_3$. 
Since $\alpha n < \omega_2 - i_0 \le \omega_2 - n/5$, by Theorem~\ref{Thm:AK78-bipartite}, we have $Z_{1}(\hat{H}_3) \le Z_{1}\big(B(\omega_2 - n/5, \alpha n, |\hat{H}_3|)\big)$. 
Express $|\hat{H}_3|= p \left(\omega_2 - \frac{1}{5}n\right)+q$, where $0 \le q < \omega_2 - \frac{1}{5}n$. 
Then 
\begin{align*}
\frac{1}{2} \rho n^2
= |G| 
= \binom{\omega_2}{2}+ \frac{1}{5} \alpha n^2 + |\hat{H}_2|+|\hat{H}_3|
= \frac{1}{2}\omega_2^2 + \frac{1}{5} \alpha n^2 + p \left(\omega_2 - \frac{1}{5}n\right) +o(n^2). 
\end{align*}

By~\eqref{EQ:Z_{1}(G)-B2-omega2-n/5},~\eqref{EQ:Q-index-B2-H'-n/5} and $Z_{1}(B_2) \le Z_{1}(\hat{B}_2)$, we have 
\begin{align*}
Z_{1}(G)
&\le \omega_2^3 + 2\omega_2 \left(|G| - \binom{\omega_2}{2}\right) + \frac{1}{5}n \cdot \alpha^2 n^2 + \alpha n \cdot \left(\frac{1}{5}n\right)^2+ \frac{2}{5}n |\hat{H}_3|+Z_{1}(\hat{H}_3)+o(n^3)\\
&= \omega_2^3 + 2\omega_2 \left(\frac{1}{5} \alpha n^2 + p\left(\omega_2 - \frac{1}{5}n\right)\right) + \frac{1}{5}n \cdot \alpha^2 n^2 + \alpha n \cdot \left(\frac{1}{5}n\right)^2\\
& \quad + \frac{2}{5}n \cdot p\left(\omega_2 - \frac{1}{5}n\right)+\left(\omega_2-\frac{1}{5}n\right)p^2+p\left(\omega_2 - \frac{1}{5}n\right)^2+o(n^3)\\
&= \frac{1}{5}n (\omega_2 +\alpha n)^2 + \left(\omega_2 - \frac{1}{5}n\right) (\omega_2 + p)^2 + p \cdot \omega_2^2 + \left(\alpha n - p\right) \left(\frac{1}{5}n\right)^2 + o(n^3).
\end{align*}
Note that $0 \le p \le \alpha n$. 
Treating this as a function of $p$ and applying Lemma~\ref{Lem:calculation_3} in Appendix (with $d =\rho/2$, $a =\alpha$, $y =\omega_2/n$ and $x= p/n$), we find that $Z_{1}(G)$ attains its maximum at $p = \alpha n$. 
Thus, 
\begin{align*}
Z_{1}(G) 
\le \left(\alpha^3+(\rho-\alpha^2) \sqrt{\rho+\alpha^2}\right)n^3 + o(n^3).
\end{align*}

Now suppose that $|H_2| \ge n/5$. 
Let 
\begin{itemize}
\item $H_1^{\ast} \coloneqq \hat{B}_2\left[\left\{v_1,\ldots,v_{n/5}\right\} \cup \left\{z_1,\ldots,z_{\alpha n+1}\right\}\right]$, 

\item $H_2^{\ast} \coloneqq \hat{B}_2\left[\left\{v_1,\ldots,v_{n/5}\right\} \cup \left\{z_{\alpha n+2}, \ldots, z_{n-\omega_2}\right\}\right]$, 

\item $H_3^{\ast} \coloneqq \hat{B}_2\left[\left\{v_{n/5 +1}, \ldots, v_{\omega_2}\right\} \cup \left\{z_1, \ldots, z_{\alpha n+1}\right\}\right]$, 

\item $H_4^{\ast} \coloneqq \hat{B}_2\left[\left\{v_{n/5 +1}, \ldots, v_{\omega_2}\right\} \cup \left\{z_{\alpha n+2}, \ldots, z_{n-\omega_2}\right\}\right]$. 
\end{itemize}
Then $H_1^{\ast}$ is a complete bipartite graph, and $H_2^{\ast}, H_4^{\ast}$ are empty. 
Similar to~\eqref{EQ:Z_{1}(B_2_example)-beta}, we have 
\begin{align*}
Z_{1}(\hat{B}_2)
= \frac{1}{5}n (\alpha n+1)^2 + (\alpha n+1) \cdot \left(\frac{1}{5}n\right)^2+\frac{2}{5}n |H_3^{\ast}|+Z_{1}(H_3^{\ast}). 
\end{align*}

Using Theorem~\ref{Thm:AK78-bipartite} and the fact that $\alpha n + 1 \le \omega_2 -i_0 \le \omega_2 - n/5$, we have $Z_{1}(H_3^{\ast}) \le Z_{1}\big(B(\omega_2 - n/5, \alpha n+1, |H_3^{\ast}|)\big)$. 
Express $|H_3^{\ast}|= p^{\ast} \left(\omega_2 - \frac{1}{5}n\right) + q^{\ast}$, where $0 \le q^{\ast} < \omega_2 - n/5$. 
Now, we have 
\begin{align*}
\frac{1}{2} \rho n^2
= |G| 
= \binom{\omega_2}{2} + \frac{1}{5}n \cdot \left(\alpha n+1\right) +|H_3^{\ast}|
= \frac{1}{2}\omega_2^2 + \frac{1}{5} \alpha n^2+ p^{\ast} \left(\omega_2 - \frac{1}{5}n\right)+o(n^2), 
\end{align*}
and 
\begin{align*}
Z_{1}(G)
& \le \omega_2^3 + 2\omega_2 \left(|G| - \binom{\omega_2}{2}\right) + \frac{1}{5}n \cdot \alpha^2 n^2 + \alpha n \cdot \left(\frac{1}{5}n\right)^2+ \frac{2}{5}n |H_3^{\ast}|+Z_{1}(H_3^{\ast})+o(n^3)\\
&= \frac{1}{5}n (\omega_2 +\alpha n)^2 + \left(\omega_2 - \frac{1}{5}n\right) (\omega_2 + p^{\ast})^2 + p^{\ast} \omega_2^2 + \left(\alpha n - p^{\ast}\right) \left(\frac{1}{5}n\right)^2 + o(n^3).
\end{align*}

Since $d_{\hat{B}_2}(z_{\alpha n+1}) = |H_2| \le i_0 -1 \le \omega_2 -1$, we have $|H_3^{\ast}| \le (\alpha n +1)\left(\omega_2 -\frac{1}{5}n\right) - 1$, and then $p^{\ast} \le \alpha n$. 
Similar to the argument above, treating $Z_{1}(G)$ as a function of $p^{\ast}$ and applying Lemma~\ref{Lem:calculation_3} in Appendix (with $d =\rho/2$, $a = \alpha$, $y =\omega_2/n$ and $x= p^{\ast}/n$), we find that $Z_{1}(G)$ attains its maximum at $p^{\ast} = \alpha n$. 
Therefore, we have
\begin{align*}
Z_{1}(G) 
\le \left(\alpha^3+(\rho-\alpha^2) \sqrt{\rho+\alpha^2}\right)n^3 + o(n^3).
\end{align*}

\textbf{Subcase 2.2. } $j_0 = n- \omega_2$. 

By the Subcase 2.1, we may assume $j_0 \ge \alpha n+1$. 

It follows from~\eqref{EQ:bound_H3_1} and~\eqref{EQ:bound_H3_2} that $|H_3| \le n$, which together with~\eqref{EQ:number-edges-G-H2-H3-n/5} and~\eqref{EQ:bound_H2} shows that 
\begin{align}\label{EQ:edge-case-j0=n-omega-n/5}
\frac{1}{2} \rho n^2 
= |G| 
= \frac{1}{2}\omega_2^2 + (n-\omega_2) i_0 + o(n^2). 
\end{align}

Combining~\eqref{EQ:Z_{1}(G)-B2-omega2-n/5} and~\eqref{EQ:Z_{1}(B_2_example)-beta}, we have 
\begin{align*}
Z_{1}(G) 
&\le \omega_2^3+ 2 \omega_2 \left(|G| - \binom{\omega_2}{2}\right) + i_0(n- \omega_2)^2+(n- \omega_2)i_0^2 + o(n^3)\\
& = \omega_2^3+2 i_0 \omega_2 (n-\omega_2) + i_0(n- \omega_2)^2+(n- \omega_2)i_0^2 + o(n^3)\\
&= i_0 n^2+(\omega_2-i_0)\omega_2^2+(n- \omega_2)i_0^2 +o(n^3). 
\end{align*}

Now we bound $i_0/n$. 
It follows from~\eqref{EQ:edge-case-j0=n-omega-n/5} that 
\begin{align}\label{EQ:i0=f(n-omega)-n/5}
i_0 
=\frac{\rho n^2 - \omega_2^2}{2(n-\omega_2)} - o(n)
= n- \frac{1}{2}\left(n -\omega_2 + \frac{n^2-\rho n^2}{n-\omega_2}\right) - o(n).
\end{align}

It follows from $n/2 \le \omega_2 \le (1- \alpha)n$ that 
\begin{align}\label{EQ:bound-n-omega-n/5}
\alpha n \le n -\omega_2 \le n/2. 
\end{align}
Moreover, under the assumption that $\frac{17}{25}\le \rho \le \frac{7}{10}$, the function $g(x) \coloneqq x+ \frac{n^2-\rho n^2}{x}$ is monotonically decreasing on the interval $[\alpha n, n/2]$. 
Thus,
\begin{align*}
\left(\frac{5}{2}-2\rho\right)n 
\le g(x) 
\le \left(\alpha+ \frac{1-\rho}{\alpha}\right)n, 
\end{align*}
for $x\in [\alpha n, n/2]$. 
This together with~\eqref{EQ:i0=f(n-omega)-n/5} and~\eqref{EQ:bound-n-omega-n/5} shows that
\begin{align*}
i_0 
\ge n- \frac{1}{2}\left(\alpha+ \frac{1-\rho}{\alpha}\right)n -o(n)
= \left(1- \frac{1}{2}\alpha -\frac{1-\rho}{2 \alpha}\right)n - o(n), 
\end{align*}
and 
\begin{align*}
i_0 
\le n -\frac{1}{2}\left(\frac{5}{2}-2\rho\right)n
= \left(\rho -\frac{1}{4}\right)n. 
\end{align*}

We maximize $Z_{1}(G)$ as a function of $i_0/n$ on the interval $\left[1- \frac{1}{2}\alpha -\frac{1-\rho}{2 \alpha},\rho-\frac{1}{4}\right]$. 
By Lemma~\ref{Lem:calculation_5} in Appendix (taking $d = \rho/2$, $a = \alpha$, $y =\omega_2/n$ and $x= i_0/n$), we have 
\begin{align*}
Z_{1}(G) 
\le \left(\alpha^3+(\rho-\alpha^2) \sqrt{\rho+\alpha^2}\right)n^3 + o(n^3).
\end{align*}

Now we consider the general case. 
Since the case $j_0 = \alpha n$ has been considered in Subcase~2.1, we may assume that $j_0 \ge \alpha n+1$. 
By~\eqref{EQ:bound_H2}, we have $|H_2| = o(n^2)$. 
By~\eqref{EQ:bound_H3_1} and~\eqref{EQ:bound_H3_2}, we have $|H_3| = o(n^2)$. 
Therefore, by~\eqref{EQ:Z_{1}(G)-B2-omega2-n/5},~\eqref{EQ:Z_{1}(B_2_example)-beta} and~\eqref{EQ:number-edges-G-H2-H3-n/5}, we have 
\begin{align}\label{EQ:number-edge-G-general-case-n/5}
\frac{1}{2} \rho n^2 
= |G| 
= \binom{\omega_2}{2} + i_0j_0 + o(n^2),    
\end{align}
and 
\begin{align}\label{EQ:Z_{1}(G)-general-case-n/5}
Z_{1}(G) 
= \omega_2 (\omega_2 -1)^2 + 2(\omega_2 -1) \left(|G| - \binom{\omega_2}{2}\right)+i_0j_0 (i_0+j_0) + o(n^3). 
\end{align}
When $\omega_2$ is fixed, $i_0j_0$ is determined by~\eqref{EQ:number-edge-G-general-case-n/5}. 
Moreover, for a fixed $C \coloneqq i_0j_0$, we have $i_0 + j_0 = C/{j_0}+j_0$ achieves its maximum when $j_0$ lies on the boundary of its domain. 
Therefore, for a fixed $\omega_2$, $Z_{1}(G)$ attains its maximum when $j_0$ is on the boundary of its domain. 
The constraints on $j_0$ are $ \alpha n+1 \le j_0 \le n -\omega_2$.
We can also bound $j_0$ using the fact $n/5 \le i_0 \le \omega_2$. However, it cannot be better. 

Note that $|B_2| < \left(\alpha n+1\right)\omega_2$, since otherwise, by Theorem~\ref{Thm:AK78-bipartite}, $Z_{1}(B_2)$ can be maximized such that $z_{\alpha n+1}v_{\omega_2} \in B_2$, which implies that $v_{\omega_2 +1}v_{\omega_2} \in G$, contradicting the definition of $\omega_2$. 
For the lower bound of $j_0$, we have $j_0 \ge \max\big\{\alpha n +1, \tfrac{i_0 j_0}{\omega_2} \big\}$. 
Comparing these two bounds, we have
\begin{align*}
\frac{i_0 j_0}{\omega_2} 
\le \frac{i_0j_0}{|B_2|} \left(\alpha n + 1\right)
\le \alpha n+1. 
\end{align*}

It follows from $n/2 \le \omega_2 \le \left(1-\alpha\right) n$ and $1/3 \le \alpha \le 2/5$ that $n/2 \le \omega_2 \le 2n/3$. 
Therefore, we have 
\begin{align}\label{EQ:beta-last-calculation}
\frac{\rho n^2 - \omega_2^2}{2(n-\omega_2)} 
\ge \frac{\frac{17}{25} n^2 - \omega_2^2}{2(n-\omega_2)}
\ge \frac{\frac{17}{25} n^2 - \left(\frac{2}{3}n\right)^2}{2\left(n-\frac{2}{3}n\right)}
= \frac{53}{150}n. 
\end{align}
For the upper bound of $j_0$, we have $j_0 \le \min\big\{n - \omega_2, \tfrac{i_0 j_0}{n/5}\big\}$. 
Using~\eqref{EQ:beta-last-calculation} to compare these two bounds, we have
\begin{align*}
\frac{i_0 j_0}{n/5} 
= \frac{|G|- \binom{\omega_2}{2}-o(n^2)}{n/5}
= 5 \times \frac{\rho n^2- \omega_2^2}{2n} - o(n)
\ge 5 \times \frac{53}{150} \left(n-\omega_2\right) - o(n)
\ge n - \omega_2. 
\end{align*}

We conclude that $\alpha n+1 \le j_0 \le n - \omega_2$. 
Then by~\eqref{EQ:Z_{1}(G)-general-case-n/5}, $Z_{1}(G)$ attains its maximum when $j_0 = n - \omega_2$ or $j_0 = \alpha n+1$. 
If $j_0 = n - \omega_2$, then we are done by Subcase 2.2. 
The case $j_0 = \alpha n+1$ can be handled similarly to Subcase 2.1, so we omit the detailed calculations. 
This completes the proof of Theorem~\ref{Thm:Max-S2-alpha-beta}. 
\end{proof}

\section*{Acknowledgements}
We would like to thank the referees for the careful reading of the manuscript and for providing many valuable suggestions. 

\begin{appendix}
\section{Appendix}
In this section, we establish  some computationally involved lemmas, verified by Mathematica. 
All Mathematica calculations are available at \url{https://github.com/xliu2022/xliu2022.github.io/blob/main/AKS2.nb}. 
Throughout this section, we assume that $d$ is a real number with $d \in \left[\tfrac{17}{50}, \tfrac{7}{20}\right]$. 

\begin{lemma}\label{LEM:alpha-inequality-star-two-upper-bounds}
    Suppose that $a \in \left[\tfrac{17}{100}, \tfrac{23}{100} \right]$. 
    Then we have 
    \begin{align*}
        a^3+(2d-a^2) \sqrt{2d+a^2} 
        > 4d -1 +(1-2d)^{3/2}.  
    \end{align*}
\end{lemma}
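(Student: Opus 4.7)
The plan is to reduce this two-variable inequality to a one-variable problem by exploiting monotonicity in $a$, and then verify the resulting one-variable inequality by a derivative computation.

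First, fix $d$ and regard the left-hand side as a function
$$L(a) := a^3 + (2d - a^2)\sqrt{2d + a^2}.$$
A direct differentiation gives
$$L'(a) = 3a^2 - \frac{a(2d + 3a^2)}{\sqrt{2d + a^2}} = \frac{a\bigl(3a\sqrt{2d + a^2} - (2d + 3a^2)\bigr)}{\sqrt{2d + a^2}}.$$
Squaring the inequality $3a\sqrt{2d + a^2} < 2d + 3a^2$ (both sides are positive) and simplifying shows that $L'(a) < 0$ if and only if $3a^2 < 2d$. On the prescribed ranges, $3a^2 \le 3(23/100)^2 = 1587/10000 < 17/25 \le 2d$, so $L$ is strictly decreasing in $a$. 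It therefore suffices to verify the inequality at the endpoint $a_0 := 23/100$.

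Second, set
$$h(d) := a_0^3 + (2d - a_0^2)\sqrt{2d + a_0^2} - 4d + 1 - (1 - 2d)^{3/2}, \qquad d \in \left[\tfrac{17}{50}, \tfrac{7}{20}\right].$$
A routine calculation yields
$$h'(d) = \frac{6d + a_0^2}{\sqrt{2d + a_0^2}} - 4 + 3\sqrt{1 - 2d}.$$
Consistent with the conventions of this appendix, I would verify in Mathematica that $h'(d) > 0$ on $[17/50, 7/20]$; combined with the direct check $h(17/50) > 0$ (one computes $h(17/50) \approx 0.008 > 0$), this forces $h(d) > 0$ throughout the interval, completing the proof.

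The main obstacle is the tightness of the bound: the gap between the two sides is only about $0.008$--$0.009$, so any crude estimate must fail. An alternative to invoking Mathematica directly on $h'$ would be to isolate $(1 - 2d)^{3/2}$ on one side and square (using that both sides remain positive on the range), reducing the claim to an inequality polynomial in $d$ and $\sqrt{2d + a_0^2}$, which can be squared once more to yield a pure polynomial inequality in $d$ on $[17/50, 7/20]$. Given the small margin, however, the Mathematica verification is the most transparent route and matches the style of the other lemmas in this appendix.
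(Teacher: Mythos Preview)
Your proposal is correct and lands at exactly the same corner point $(a,d)=(23/100,\,17/50)$ as the paper, with the same numerical margin (the paper records it as $\tfrac{6271\sqrt{7329}}{1000000}-\tfrac{347833}{1000000}-\tfrac{16\sqrt{2}}{125}>0$). The only difference is one of presentation: the paper simply asserts via a single Mathematica call that the two-variable minimum of the difference over the rectangle is attained at that corner, whereas you supply the monotonicity in $a$ by an explicit derivative computation and reduce the Mathematica appeal to a one-variable sign check on $h'(d)$ followed by evaluating $h(17/50)$. Your route is slightly more transparent and makes clear \emph{why} the minimum sits at that corner, at the cost of a few extra lines; the paper's route is shorter but more of a black box.
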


\begin{proof}[Proof of Lemma~\ref{LEM:alpha-inequality-star-two-upper-bounds}]
Using the fact that $d \in \left[\tfrac{17}{50}, \tfrac{7}{20}\right]$ and $a \in \left[\tfrac{17}{100}, \tfrac{23}{100} \right]$, Mathematica verifies that 
\begin{align*}
& \quad \, \, a^3+(2d-a^2) \sqrt{2d+a^2} -\left(4d -1 +(1-2d)^{3/2}\right) \\
& \ge  a^3+(2d-a^2) \sqrt{2d+a^2} -\left(4d -1 +(1-2d)^{3/2}\right) \at{a = \frac{23}{100}, d=\frac{17}{50}}\\
& = \frac{6271 \sqrt{7329}}{1000000} - \frac{347833}{1000000}-\frac{16\sqrt{2}}{125} 
> 0. 
\end{align*}
\end{proof}


\begin{lemma}\label{Lem:Appe-calculation_1}
Let $a \in \left[\tfrac{17}{100}, \tfrac{23}{100}\right]$, $x \in \left[0,a\right]$ and $y \in \left[0,1-a \right]$ be real numbers satisfying 
$ y^2/2 + xy + a^2 - a x = d$. 
Let 
\begin{align*}
f_1(x) & \coloneqq xy^2 + (a-x)a^2 + a(a+y)^2+(y-a)(x+y)^2. 
\end{align*}
Then we have $f_1(x) \le f_1(a) = a^3 + (2d-a^2)\sqrt{2d+a^2}$.
\end{lemma}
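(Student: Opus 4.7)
The plan is to parametrize $y$ implicitly as a function of $x$ via the constraint and then show that $f_1$, viewed as a single-variable function, is monotonically non-decreasing on $[0,a]$; since the maximum is then attained at $x=a$, the direct substitution $y|_{x=a} = -a+\sqrt{a^2+2d}$ will confirm the stated value.

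I would first simplify $f_1$ using the clean reformulation of the constraint, namely the identity $(x+y)^2 - (x+a)^2 = 2d - 3a^2$. Expanding $f_1$ and systematically eliminating $y^2$ via $y^2 + 2xy = 2d + 2ax - 2a^2$ yields the compact form
\[
f_1(x) = 5a^3 + (x+a)(2d-3a^2) + (y-a)(2d-x^2),
\]
which makes the verification of $f_1(a) = a^3+(2d-a^2)\sqrt{2d+a^2}$ a one-line substitution. Implicit differentiation gives $dy/dx = (a-y)/(x+y)$, and after routine algebra
\[
\frac{df_1}{dx} = (2d-3a^2)\left[1 - \frac{x^2+2xy+2d}{(x+y)(2x+y+a)}\right].
\]
Since $2d\ge 17/25$ while $3a^2 \le 3(23/100)^2$, the factor $2d-3a^2$ is positive, so monotonicity reduces to $(x+y)(2x+y+a) \ge x^2+2xy+2d$, and a further application of the constraint brings this to
\[
2(x^2+ax-a^2) + (a-x)(x+y) \ge 0.
\]
For $x \ge a(\sqrt{5}-1)/2$, both summands are non-negative and the inequality is immediate.

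The main obstacle is the range $x \in [0, a(\sqrt{5}-1)/2]$, where $x^2+ax-a^2 < 0$. Here I would rewrite the target as $(a-x)(x+y) \ge 2(a^2-ax-x^2)$, square both sides (each being positive), and substitute $(x+y)^2 = x^2+2ax+2d-2a^2$. After the scaling $t = x/a$, the resulting polynomial inequality takes the form $2d \ge a^2 R(t)$ with
\[
R(t) = \frac{3t^4 + 8t^3 + t^2 - 14t + 6}{(1-t)^2}.
\]
The key sub-claim is that $R$ is non-increasing on $[0,(\sqrt{5}-1)/2]$, so that $R(t) \le R(0) = 6$. Up to the positive factor $(1-t)^3$ this reduces to $S(t) := 3t^4-2t^3-12t^2+6t+1 \ge 0$ on the interval, which follows from checking that $S$ is unimodal there (the cubic $S'$ has a unique interior sign change, by $h'(t) = 2(3t+2)(t-1) \le 0$ for $t \in [0, 1]$) and that $S(0) = 1$ and $S((\sqrt{5}-1)/2) = (5\sqrt{5}-11)/2$ are both positive. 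The final inequality $6a^2 \le 2d$ then holds with room to spare since $6(23/100)^2 < 17/25$. These computationally routine but tedious polynomial checks are exactly the kind of verification that the paper defers to Mathematica.
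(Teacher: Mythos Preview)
Your proof is correct and follows the same overall strategy as the paper: both establish that $f_1$, viewed along the constraint curve, is monotonically non-decreasing in $x$ by analysing its derivative. The paper solves for $y$ explicitly, writes out $df_1/dx$, and then simply appeals to Mathematica for the sign; you instead work implicitly, obtain the clean factorisation
\[
\frac{df_1}{dx}=(2d-3a^2)\Bigl[1-\tfrac{x^2+2xy+2d}{(x+y)(2x+y+a)}\Bigr],
\]
and verify the remaining polynomial inequality $2(x^2+ax-a^2)+(a-x)(x+y)\ge 0$ by hand through the substitution $t=x/a$ and the unimodality argument for $S(t)$. Everything checks (the identity $(a-y)(2x+y+a)=3a^2-2d$, the numerator $3t^4+8t^3+t^2-14t+6$, the endpoint values $S(0)=1$ and $S(\phi)=(5\sqrt5-11)/2$, and the final comparison $6a^2\le 6(23/100)^2<17/25\le 2d$). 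The gain of your route is a fully self-contained, computer-free verification; the paper trades this for brevity by outsourcing the sign check to Mathematica.
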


\begin{proof}[Proof of Lemma~\ref{Lem:Appe-calculation_1}]
Solving $ y^2/2 + xy + a^2 - a x = d$, we obtain that 
\begin{align*}
y = -x + \sqrt{x^2 + 2ax +2d-2a^2}. 
\end{align*}
Substituting this into $f_1$ yields 
\begin{align}\label{EQ:function-f_1(x)-explicit}
f_1(x) = x^3 + a x^2 +(2d-x^2) \sqrt{ x^2+2 a x + 2 d-2 a^2} - 3 a^2 x + 2 a^3.  
\end{align}
Taking the derivative of $f_1(x)$ with respect to $x$, we have 
\begin{align*}
\diff{f_1}{x} = 3x^2 + 2a x-3a^2 + \frac{(2d-x^2)(x+a)}{\sqrt{x^2+2 a x + 2 d-2 a^2}} -2x\sqrt{ x^2+2 a x + 2 d-2 a^2}. 
\end{align*}
Using the fact that $d \in \left[\tfrac{17}{50}, \tfrac{7}{20}\right]$ and  $a \in \left[\tfrac{17}{100}, \tfrac{23}{100}\right]$, we have $\diff{f_1}{x} \ge 0$ for all $x \in \left[0,a\right]$ by Mathematica. 
This means that $f_1(x)$ is increasing on $[0,a]$, and hence $f_1(x) \le f_1(a)$. 
It follows from~\eqref{EQ:function-f_1(x)-explicit} that $f_1(a) = a^3 + (2d-a^2)\sqrt{2d+a^2}$, completing the proof of Lemma~\ref{Lem:Appe-calculation_1}. 
\end{proof}


\begin{lemma}\label{Lem:calculation_2}
Let  $ a \in \left[\tfrac{1}{3},\tfrac{2}{5}\right]$, $x \in \left[0,y-\tfrac{1}{5}\right]$, and $y \in \left[0,1-a \right]$ be real numbers satisfying $y^2/2 + ax+ a/5 = d$. 
Let 
\begin{align*}
f_2(x) 
&\coloneqq \left(x+\frac{1}{5}\right)(y+a)^2+\left(y-x-\frac{1}{5}\right)y^2+a\left(x+\frac{1}{5}\right)^2. 
\end{align*}
Then we have $f_2(x) \le a^3 +(2d-a^2)\sqrt{2d+a^2}$. 
\end{lemma}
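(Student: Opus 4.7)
The plan is to mirror the strategy of Lemma~\ref{Lem:Appe-calculation_1}: use the constraint $y^2/2 + ax + a/5 = d$ to eliminate $y$ and reduce $f_2$ to a single-variable optimization in $x$, then establish monotonicity and evaluate at the maximizing boundary. Solving for the positive root yields $y = y(x) = \sqrt{2d - 2ax - 2a/5}$, so $F(x) \coloneqq f_2(x)$ becomes an explicit function of $x$ alone on the admissible interval.

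First I would determine the effective range of $x$. The admissibility constraint $x \le y - 1/5$, upon squaring, becomes $(x+1/5)^2 \le 2d - 2ax - 2a/5$, a quadratic in $x$ whose positive solution gives the upper endpoint $x^* = -\tfrac{1}{5} - a + \sqrt{a^2 + 2d}$. One should check that $x^* \ge 0$ throughout the parameter range $d \in [17/50, 7/20]$, $a \in [1/3, 2/5]$, so that the interval $[0, x^*]$ is nonempty.

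Next, using $y'(x) = -a/y(x)$ obtained by implicit differentiation of the constraint, I would express $F'(x)$ as a rational function of $x$, $a$, and $y$, and show that $F'(x) \ge 0$ on $[0, x^*]$ throughout the parameter box. As in Lemma~\ref{Lem:Appe-calculation_1}, this symbolic positivity check is the step I expect to hand to Mathematica. Monotonicity of $F$ then reduces the problem to evaluating $F(x^*)$.

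Finally, at $x = x^*$ the constraint collapses nicely: $x^* + \tfrac{1}{5} = y^* \coloneqq -a + \sqrt{a^2 + 2d}$, so $(y^* + a)^2 = a^2 + 2d$ and the middle summand $(y - x - 1/5)y^2$ of $f_2$ vanishes. A short computation then gives
\begin{align*}
F(x^*) = y^*(a^2 + 2d) + a(y^*)^2 = \bigl(-a + \sqrt{a^2+2d}\bigr)\bigl(2d + a\sqrt{a^2+2d}\bigr) = a^3 + (2d - a^2)\sqrt{2d + a^2},
\end{align*}
which matches the claimed bound. The main obstacle will be the uniform positivity of $F'(x)$: unlike the cleaner symmetric setup of Lemma~\ref{Lem:Appe-calculation_1}, the explicit constant $1/5$ here breaks the homogeneity of the expression, so the derivative may need to be rewritten (for instance multiplied through by $y$) before a decisive sign can be extracted. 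Barring such algebraic inconvenience, the Mathematica routine already used elsewhere in the appendix should close the estimate.
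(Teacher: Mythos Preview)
Your computation of the endpoint $x^* = \theta(a,d) = \sqrt{a^2+2d}-a-\tfrac{1}{5}$ and your evaluation $F(x^*) = a^3 + (2d-a^2)\sqrt{2d+a^2}$ are both correct, and they agree with the paper. The gap is in the monotonicity step: $F'(x) \ge 0$ does \emph{not} hold on all of $[0,x^*]$. After the substitution, one has
\[
f_2(x) = a\bigl(x+\tfrac{1}{5}\bigr)\bigl(x+a+\tfrac{1}{5}\bigr) + 2d\sqrt{2\bigl(d-ax-\tfrac{a}{5}\bigr)},
\]
so
\[
f_2'(x) = a\bigl(x+\tfrac{1}{5}\bigr)+a\bigl(x+a+\tfrac{1}{5}\bigr) - ad\sqrt{\tfrac{2}{d-ax-a/5}}.
\]
At $x=0$ with $(a,d)=\bigl(\tfrac{2}{5},\tfrac{7}{20}\bigr)$ this gives $f_2'(0) = \tfrac{8}{25} - \tfrac{14}{100}\sqrt{200/27} \approx -0.06 < 0$, and in fact $f_2'(0)<0$ throughout the parameter box. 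So the function genuinely dips before rising, and no rewriting of the derivative will salvage a global sign.

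The paper's proof handles exactly this difficulty by a two-zone argument: it verifies $f_2''(x)\ge 0$ on $[0,\tfrac{3}{10}]$ (convexity, so the maximum on that subinterval is at an endpoint) and $f_2'(x)\ge 0$ on $[\tfrac{3}{10},\tfrac{37}{100}]\supseteq[\tfrac{3}{10},\theta]$, which together give $f_2(x) \le \max\{f_2(0),f_2(\theta)\}$. A final Mathematica check that $f_2(\theta) > f_2(0)$ is then required and cannot be avoided. Your plan needs either this convexity-plus-endpoint-comparison step or some other mechanism to control the initial decrease of $f_2$.
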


\begin{proof}[Proof of Lemma~\ref{Lem:calculation_2}]
Solving $y^2/2+ ax+ a/5=d$, we obtain that 
\begin{align}\label{EQ:expression-y-f2}
y = \sqrt{2 \left(d-ax-\frac{1}{5}a\right)}. 
\end{align}
Substituting this into $f_2$ yields that 
\begin{align*}
f_2(x) = a\left(x+\frac{1}{5}\right)\left(x+a+\frac{1}{5}\right)+ 2d \sqrt{2 \left(d-ax-a/5\right)}.
\end{align*}
Thus, 
\begin{align*}
\diff{f_2}{x} = a \left(x+\frac{1}{5}\right)+ a \left(x+a+\frac{1}{5}\right)- ad \sqrt{\frac{2}{d-ax-a/5}},
\end{align*}
and then
\begin{align*}
\diff[2]{f_2}{x} = 2a- \frac{\sqrt{2}a^2 d}{2\left(d-ax-a/5\right)^{3/2}}. 
\end{align*}

By~\eqref{EQ:expression-y-f2} and $0 \le x \le y - 1/5$, we have 
\begin{align*}
0 \le x \le \theta(a,d) \coloneqq \sqrt{a^2+2d}-a -\frac{1}{5}.
\end{align*}
Now we bound $\theta$. 
It follows from $d \in \left[\tfrac{17}{50}, \tfrac{7}{20}\right]$ and $a \in \left[\tfrac{1}{3},\tfrac{2}{5}\right]$ that 
\begin{align*}
\theta(a,d) 
\ge \sqrt{a^2+2 \times \frac{17}{50}}-a -\frac{1}{5}
\ge \sqrt{\left(\frac{2}{5}\right)^2+2 \times \frac{17}{50}}-\frac{2}{5} -\frac{1}{5}
> \frac{3}{10}, 
\end{align*}
and 
\begin{align*}
\theta(a,d)
\le \sqrt{a^2+2 \times \frac{7}{20}} - a -\frac{1}{5}
\le \sqrt{\left(\frac{1}{3}\right)^2+2 \times \frac{7}{20}}-\frac{1}{3} -\frac{1}{5}< \frac{37}{100}. 
\end{align*}

If $0\le x \le \tfrac{3}{10}$, then $\diff[2]{f_2}{x} \ge 0$ by Mathematica. 
This means that $f_2(x)$ is convex, and hence,   
\begin{align}\label{EQ:Cal_f3_first_convex}
f_2(x) 
\le \max\left\{f_2(0),f_2(3/10)\right\}.
\end{align}

Suppose that $\frac{3}{10}\le x \le \frac{37}{100}$. 
It is easy to verify that $\diff{f_2}{x} \ge 0$ by Mathematica. 
Thus, $f_2(x) \le f_2\big(\theta(a,d)\big)$ for $\tfrac{3}{10} \le x \le \theta(a,d)$. 
This together with~\eqref{EQ:Cal_f3_first_convex} yields that 
\begin{align*}
f_2(x) 
&\le \max\left\{f_2(0), f_2\left(\sqrt{a^2+2d}-a -1/5\right)\right\}\\
&= \max \left\{\frac{1}{5}a^2+\frac{1}{25}a+2d\sqrt{2\left(d-\frac{1}{5}a\right)},a^3 +(2d-a^2)\sqrt{2d+a^2}\right\}. 
\end{align*}
Let 
\begin{align*}
g(a,d) \coloneqq a^3 +(2d-a^2)\sqrt{2d+a^2}-\left(\frac{1}{5}a^2+\frac{1}{25}a+2d\sqrt{2\left(d-\frac{1}{5}a\right)}\right). 
\end{align*}
Using the fact that $d \in \left[\tfrac{17}{50}, \tfrac{7}{20}\right]$ and $a \in \left[\tfrac{1}{3},\tfrac{2}{5}\right]$, we know $g(a,d) > 0$ by Mathematica, and then 
\begin{align*}
f_2(x) \le a^3 +(2d-a^2)\sqrt{2d+a^2}.
\end{align*}
\end{proof}


\begin{lemma}\label{Lem:calculation_3}
Let $a \in \left[\tfrac{1}{3},\tfrac{2}{5}\right]$, $x \in \left[0,a\right]$, and $y \in \left[0,1-a\right]$ be real numbers satisfying $y^2/2 + xy + a/5-x/5 = d$. 
Let
\begin{align*}
f_3(x) 
\coloneqq \frac{1}{5}(y+a)^2 + \left(y-\frac{1}{5}\right)(x+y)^2 + x y^2+ \left(a-x\right)\left(\frac{1}{5}\right)^2.
\end{align*}
Then we have $f_3(x) \le f_3(a) = a^3 + (2d - a^2)\sqrt{2d+a^2}$. 
\end{lemma}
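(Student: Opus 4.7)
The plan is to follow the same strategy used for Lemma~\ref{Lem:calculation_2}: use the constraint to eliminate $y$, reduce $f_3$ to a single-variable function of $x$ on $[0, a]$, analyze its derivatives, and defer the final scalar inequalities to Mathematica over the compact parameter box $a \in [1/3, 2/5]$, $d \in [17/50, 7/20]$.

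First I would solve the constraint $y^2/2 + xy + (a-x)/5 = d$ for $y$ (choosing the positive root), giving
$$y = -x + \sqrt{x^2 + 2d + \tfrac{2}{5}(x - a)}.$$
Writing $R \coloneqq \sqrt{x^2 + 2d + \tfrac{2}{5}(x - a)}$ so that $y = R - x$ and $(x+y)^2 = R^2$, the substitution expresses $f_3(x)$ as a polynomial in $x$ and $R$ with coefficients depending on $a, d$. A direct symbolic check at $x = a$ (where $R = \sqrt{a^2 + 2d}$) recovers $f_3(a) = a^3 + (2d - a^2)\sqrt{2d + a^2}$, confirming the claimed endpoint value.

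Next I would compute $df_3/dx$ and try to show that it is nonnegative throughout $[0, a]$, which would immediately yield $f_3(x) \le f_3(a)$. Guided by the analogous analysis for Lemma~\ref{Lem:calculation_2}, I anticipate that pure monotonicity may fail near one endpoint, so the fallback plan is to split $[0, a]$ at a cutoff $x_0$ (likely $a/2$, or a value determined by the vanishing locus of $d^2 f_3 /dx^2$), prove convexity via $d^2 f_3 / dx^2 \ge 0$ on one subinterval, and monotonicity on the other. The convex piece then reduces to comparing values at its two endpoints, while the monotone piece reduces to a single endpoint comparison.

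The main obstacle I expect is the resulting boundary inequality $f_3(0) \le f_3(a)$. At $x = 0$ the constraint yields $y = \sqrt{2d - 2a/5}$, so
$$f_3(0) = \tfrac{1}{5}(y + a)^2 + \bigl(y - \tfrac{1}{5}\bigr) y^2 + \tfrac{a}{25},$$
which must be compared with $a^3 + (2d-a^2)\sqrt{2d + a^2}$. This is a genuine two-parameter irrational inequality of exactly the kind that appears in Lemmas~\ref{LEM:alpha-inequality-star-two-upper-bounds} and~\ref{Lem:calculation_2}: no clean hand calculation is available, but the compactness of the $(a,d)$-box makes it routine for Mathematica to verify, analogously to the function $g(a,d)$ handled at the end of the proof of Lemma~\ref{Lem:calculation_2}. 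Once this scalar check and the derivative sign information are in place, the conclusion $f_3(x) \le f_3(a)$ follows.
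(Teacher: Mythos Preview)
Your plan is essentially the same as the paper's: solve the constraint for $y$, substitute to get a one-variable function of $x$, split $[0,a]$ into two pieces, and hand the residual scalar inequalities to Mathematica over the compact $(a,d)$-box. The endpoint computation $f_3(a) = a^3 + (2d - a^2)\sqrt{2d+a^2}$ is correct.

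The only real difference is in how the lower piece is treated. You propose to prove convexity on $[0,x_0]$ and thereby reduce to the two endpoint values $f_3(0)$ and $f_3(x_0)$, mimicking the argument for Lemma~\ref{Lem:calculation_2}. The paper does something slightly more blunt for this lemma: it takes $x_0 = 1/4$ (a constant independent of $a$), shows $\mathrm{d}f_3/\mathrm{d}x \ge 0$ on $[1/4, 2/5]$ via Mathematica, and on $[0,1/4]$ simply asks Mathematica to verify the three-variable inequality
\[
\min\bigl\{\, f_3(a) - f_3(x) \;:\; x \in [0,\tfrac14],\; a \in [\tfrac13,\tfrac25],\; d \in [\tfrac{17}{50},\tfrac{7}{20}] \,\bigr\} > 0
\]
directly, without passing through the second derivative at all. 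This sidesteps the question of whether $f_3$ is actually convex on the lower subinterval (the paper never claims it is), at the cost of optimizing over a three-dimensional box rather than two separate two-dimensional ones. Your convexity route would work if $\mathrm{d}^2 f_3/\mathrm{d}x^2 \ge 0$ does hold there, but if it fails you would need to fall back on exactly the paper's brute-force check; so be prepared for that contingency rather than committing to convexity in advance.
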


\begin{proof}[Proof of Lemma~\ref{Lem:calculation_3}]
Solving $y^2/2 + xy + a/5-x/5 = d$, we obtain that 
\begin{align*}
y = \frac{1}{5} \sqrt{25x^2+10x+50d-10a}-x. 
\end{align*}
Therefore, substituting this into $f_3$, we have 
\begin{align*}
f_3(x) 
=\frac{1}{25}\left(25x^3+5x^2-10ax-x+5a^2+a-\left(5x^2-10d\right)\sqrt{25x^2+10x+50d-10a}\right). 
\end{align*}
Then we have 
\begin{align*}
\diff{f_3}{x} 
= \frac{1}{25}\Bigg(&75x^2+10x-10a-1-10x \sqrt{25x^2+10x+50d-10a}\\
&-\frac{(50x+10)(5x^2 -10d)}{2\sqrt{25x^2+10x+50d-10a}}\Bigg). 
\end{align*}

Note that using Mathematica, we have
\begin{align*}
\min \left\{f_3(a)-f_3(x) \colon x \in \left[0,\frac{1}{4}\right], a \in \left[\frac{1}{3},\frac{2}{5}\right], \text{~and~} d \in \left[\frac{17}{50},\frac{7}{20}\right] \right\}
> 0.002232. 
\end{align*}
Therefore, we have $f_3(x) < f_3(a)$ for $x \in \left[0,\tfrac{1}{4}\right]$. 

When $\tfrac{1}{4} \le x \le \tfrac{2}{5}$, one can verify that $\diff{f_3}{x} \ge 0$ by Mathematica, which means that $f_3(x)$ is increasing on $\left[\tfrac{1}{4},\tfrac{2}{5}\right]$. 
Since $\tfrac{1}{3} \le a \le \tfrac{2}{5}$, we have $f_3(x) \le f_3(a)$ for $x \in \left[\tfrac{1}{4},a\right]$. 
This completes the proof of Lemma~\ref{Lem:calculation_3}.  
\end{proof}


\begin{lemma}\label{Lem:calculation_5}
Let $ a \in \left[\tfrac{1}{3},\tfrac{2}{5}\right]$, $x\in \left[1- \tfrac{1}{2}a -\tfrac{1-2d}{2a},2d - \tfrac{1}{4}\right]$ and $y \in \left[0,1-a\right]$ be real numbers satisfying $y \ge x$ and $y^2/2 + (1-y)x = d$. 
Let 
\begin{align*}
f_4(x) \coloneqq x+(y-x)y^2+(1-y)x^2. 
\end{align*}
Then we have $f_4(x) < a^3+(2d-a^2) \sqrt{2d+a^2}$. 
\end{lemma}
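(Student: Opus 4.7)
The plan is to follow the pattern of Lemmas~\ref{Lem:Appe-calculation_1}--\ref{Lem:calculation_3}: eliminate $y$ via the constraint to reduce $f_4$ to a univariate function, determine its monotonicity on the prescribed interval, and compare its maximum with the target expression, relying on Mathematica for the final verification.

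First, I would solve the quadratic $y^2/2 + (1-y)x = d$ for $y$ under the side condition $y \ge x$, obtaining $y = x + \sqrt{x^2 - 2x + 2d}$. Substituting this into $f_4(x) = x + (y - x) y^2 + (1 - y) x^2$ and simplifying yields the closed form
\begin{align*}
    f_4(x) = x^3 - 3 x^2 + (1 + 4 d) x + \bigl( x^2 - 2 x + 2 d \bigr)^{3/2}.
\end{align*}
Differentiating gives
\begin{align*}
    f_4'(x) = 3 x^2 - 6 x + 1 + 4 d + 3 (x - 1) \sqrt{x^2 - 2 x + 2 d}.
\end{align*}
I would then verify that $f_4'(x) \ge 0$ throughout the interval $\bigl[1 - a/2 - (1-2d)/(2a),\, 2d - 1/4\bigr]$. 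Writing $u = x - 1 < 0$, isolating the radical and squaring, this reduces to the pair of algebraic conditions $3 u^2 \ge 2 - 4 d$ and $u^2 \le 4(1 - 2 d)/3$; checking that both endpoints of the prescribed $x$-interval satisfy these bounds uniformly in $(a, d)$ over the compact parameter box $a \in [1/3, 2/5]$, $d \in [17/50, 7/20]$ is a routine Mathematica task. Once done, this shows $f_4$ is nondecreasing, so it attains its maximum at the right endpoint $x_0 \coloneqq 2 d - 1/4$.

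A short algebraic computation at $x_0$ uses the identity $x_0^2 - 2 x_0 + 2 d = (3/4 - 2 d)^2$ together with $2 d \le 7/10 < 3/4$ to give $\bigl( x_0^2 - 2 x_0 + 2 d \bigr)^{3/2} = (3/4 - 2 d)^3$; expanding produces the compact form
\begin{align*}
    f_4(x_0) = 2 d^2 + d - \tfrac{1}{32}.
\end{align*}
The lemma then reduces to the two-variable inequality
\begin{align*}
    2 d^2 + d - \tfrac{1}{32} < a^3 + (2 d - a^2) \sqrt{2 d + a^2}
\end{align*}
for all $(a, d) \in [1/3, 2/5] \times [17/50, 7/20]$, to be verified by Mathematica in the same spirit as Lemma~\ref{LEM:alpha-inequality-star-two-upper-bounds}.

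The main obstacle will be this final comparison: numerically the gap between the two sides is of order $10^{-3}$ across the parameter box, so the Mathematica verification must be performed carefully (for example by exhibiting a certificate that the difference is strictly positive at each corner and has a controlled gradient). A secondary technical point is confirming that the derivative-sign analysis applies uniformly to the entire prescribed interval, i.e.\ that $x_{\min} = 1 - a/2 - (1 - 2 d)/(2 a)$ lies to the right of $1 - 2\sqrt{(1 - 2 d)/3}$; after clearing denominators this reduces to the polynomial condition $\bigl(a^2 + 1 - 2 d\bigr)^2 \le 16 a^2 (1 - 2 d)/3$, again a routine check on the same parameter box.
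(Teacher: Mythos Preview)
Your proposal is correct and follows the paper's template closely, but it diverges at the analytic step: the paper verifies $f_4''(x)\ge 0$ on a coarse superset $x\in[1/3,9/20]$ (convexity via Mathematica) and then checks \emph{both} endpoints against the target, whereas you verify $f_4'(x)\ge 0$ directly (monotonicity) and therefore only need the right endpoint. Your squaring reduction to the pair $3u^2\ge 2-4d$ and $u^2\le 4(1-2d)/3$ is sound; since $u^2=(1-x)^2$ is monotone in $x$ on $x<1$, checking the first inequality at $x_R$ and the second at $x_L$ suffices, and both hold on the parameter box (the critical one is $(a^2+1-2d)^2\le 16a^2(1-2d)/3$, which is equivalent to $a^2/(1-2d)\in[1/3,3]$ and is satisfied with a small margin at $a=1/3$, $d=17/50$). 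The explicit identification $x_R^2-2x_R+2d=(3/4-2d)^2$ and the resulting closed form $f_4(x_R)=2d^2+d-1/32$ are a nice bonus the paper does not exploit. The trade-off is that your monotonicity argument requires an additional endpoint verification in $(a,d)$, while the paper's convexity check is a single two-variable Mathematica call; in return you get a cleaner one-sided final comparison. Both routes are valid and of comparable difficulty.
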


\begin{proof}[Proof of Lemma~\ref{Lem:calculation_5}]
Solving $y^2/2 + (1-y)x = d$, we have 
\begin{align*}
y = x- \sqrt{x^2-2x+2d} \text{~or~} y = x+ \sqrt{x^2-2x+2d}. 
\end{align*}
It follows from $y \ge x$ that $y= x+ \sqrt{x^2-2x+2d}$ and then 
\begin{align*}
f_4(x) 
= x+(y-x)y^2+(1-y)x^2 
= x^3-3x^2+4dx+x+\left(x^2-2x+2d\right)^{3/2}.  
\end{align*}
Note that 
\begin{align*}
\diff{f_4}{x}
= 3x^2-6x +4d+1+3(x-1) \sqrt{x^2-2x+2d},
\end{align*}
and 
\begin{align*}
\diff[2]{f_4}{x}
= 6x-6+ \frac{3(x-1)^2}{\sqrt{x^2-2x+2d}}+3\sqrt{x^2-2x+2d}. 
\end{align*}

Since $d \in \left[\tfrac{17}{50}, \tfrac{7}{20}\right]$ and $a \in \left[\tfrac{1}{3},\tfrac{2}{5}\right]$, we have 
\begin{align*}
x 
\ge 1- \frac{1}{2}a -\frac{1-2d}{2a} 
\ge 1- \frac{1}{2} \times \frac{1}{3} -\frac{1-2\times \frac{17}{50}}{2\times \frac{1}{3}}
= \frac{53}{150}
> \frac{1}{3}, 
\end{align*}
and 
\begin{align*}
x
\le 2d - \frac{1}{4}
\le 2 \times \frac{7}{20} - \frac{1}{4}
= \frac{9}{20}. 
\end{align*}
Since $d \in \left[\tfrac{17}{50}, \tfrac{7}{20}\right]$, one can verify that $\diff[2]{f_4}{x} \ge 0$ on $x \in \left[\tfrac{1}{3},\tfrac{9}{20}\right]$ by Mathematica. 
Thus, 
\begin{align*}
f_4(x) 
\le \max \left\{f_4(1-a/2-(1-2d)/2a),f_4(2d-1/4)\right\} 
< a^3+\left(2d - a^2\right)\sqrt{2d+a^2}, 
\end{align*}
where the last inequality also follows from Mathematica and the fact that $\tfrac{17}{50} \le d \le \tfrac{7}{20}$ and $\tfrac{1}{3} \le a \le \tfrac{2}{5}$. 
This completes the proof of Lemma~\ref{Lem:calculation_5}. 
\end{proof}

\end{appendix}
\end{document}